\documentclass[11pt]{article}
\usepackage{mathtools}
\usepackage[T1]{fontenc}
\usepackage{amsfonts}
\usepackage{amsmath}
\usepackage{amssymb}
\usepackage{amsthm}
\usepackage{bbm}
\usepackage{bm}
\usepackage{mathrsfs}
\usepackage{color}
\usepackage[normalem]{ulem}
\usepackage{pdfsync}
\usepackage{enumitem}
\usepackage[colorlinks=true, linkcolor=blue, citecolor=blue, urlcolor=blue]{hyperref}

\usepackage{tikz}

\newcommand{\RR}{\mathbb{R}}
\newcommand{\R}{\RR}
\newcommand{\N}{\mathbb{N}}

\newcommand{\cL}{\mathcal{L}}

\newcommand{\eps}{ \varepsilon}

\newcommand{\ball}{\mathbb{B}}
\usepackage[margin=31 mm]{geometry}
\newcommand{\mykill}[1]{}
\newcommand{\proofreadhere}{}

\mathtoolsset{showonlyrefs=true}

\usepackage[capitalize, noabbrev]{cleveref}
\crefname{equation}{}{} %
\Crefname{equation}{}{} %
\crefname{enumi}{}{} %
\Crefname{enumi}{}{} %

\theoremstyle{plain}
\newtheorem{theorem}{Theorem}[section]
\newtheorem{proposition}[theorem]{Proposition}
\newtheorem{lemma}[theorem]{Lemma}
\newtheorem{corollary}[theorem]{Corollary}
\theoremstyle{definition}

\newtheorem{remark}[theorem]{Remark}

\crefname{assumption}{Assumption}{Assumptions}
\Crefname{assumption}{Assumption}{Assumptions}
\theoremstyle{remark}
\AddToHook{env/theorem/begin}{\crefalias{section}{theorem}}
\AddToHook{env/proposition/begin}{\crefalias{theorem}{proposition}}
\AddToHook{env/lemma/begin}{\crefalias{theorem}{lemma}}
\AddToHook{env/corollary/begin}{\crefalias{theorem}{corollary}}
\AddToHook{env/definition/begin}{\crefalias{theorem}{definition}}
\AddToHook{env/remark/begin}{\crefalias{theorem}{remark}}
\AddToHook{env/example/begin}{\crefalias{theorem}{example}}
\AddToHook{env/assumption/begin}{\crefalias{theorem}{assumption}}
\crefname{theorem}{Theorem}{Theorems}
\crefname{proposition}{Proposition}{Propositions}
\crefname{lemma}{Lemma}{Lemmas}
\crefname{corollary}{Corollary}{Corollaries}
\crefname{definition}{Definition}{Definitions}
\crefname{remark}{Remark}{Remarks}
\crefname{example}{Example}{Examples}
\crefname{assumption}{Assumption}{Assumptions}

\newlist{myenum}{enumerate}{3}
\setlist[myenum,1]{label={\rm (H\arabic*)},
                   ref  ={\rm (H\arabic*)}}
\crefname{myenumi}{property}{properties}

\renewenvironment{thebibliography}[1]{%
\begin{oldthebibliography}{#1}%
\setlength{\baselineskip}{.9em}
\linespread{1}
\small
\setlength{\parskip}{.30ex}%
\setlength{\itemsep}{.29em}%
}%
{%
\end{oldthebibliography}%
}

\begin{document}

\title{Sharp Asymptotics for Regularized Optimal Transport}
\date{\today}
\author{
Carlos Cardoso-Perelló\thanks{Department of Statistics, Columbia University, cc5415@columbia.edu.}
\and
Alberto Gonz{\'a}lez-Sanz\thanks{Department of Statistics, Columbia University, ag4855@columbia.edu.}
\and
Marcel Nutz\thanks{Department of Statistics, Columbia University, mnutz@columbia.edu.  Research supported by NSF Grants DMS-2106056, DMS-2407074.}
}

\maketitle
\vspace{-1.5em}

\begin{abstract}
We study the small-regularization limit for \(L^p\)-regularized optimal transport with $1<p<\infty$ and for entropically regularized optimal transport (EOT). The exact first-order (respectively, second-order) asymptotics are determined explicitly under mild assumptions on the source and target measures. Our work generalizes the existing results for quadratic and entropic regularization, and connects them by a natural interpolation via \(p\in(1,2)\). We derive all these asymptotics in a unified manner by a novel approach that separates the local computation of the optimal profile from the global enforcement of the marginal constraints: convex duality leads to Gaussian profiles for entropy and Barenblatt profiles for \(L^p\)-regularization, while a quantization construction turns these local profiles into couplings.
\end{abstract}

\vspace{1em}

{\small
\noindent \emph{Keywords.} Regularized Optimal Transport; Entropic Optimal Transport; Quantization.

\noindent \emph{AMS 2020 Subject Classification.} {49Q22; 60E15; 65K10.}

}
\vspace{0em}
\setcounter{tocdepth}{2}
\tableofcontents
\section{Introduction}
 The optimal transport (OT) cost between two probability measures  $\mu$ and $\nu$ on $\R^d$ is
\begin{equation}
\label{eq:2_ot_definition}    
 {\rm OT}(\mu,\nu) :=\inf_{\pi\in \Pi(\mu,\nu)}\int \|x-y\|^2 d\pi(x,y), 
\end{equation}
where $\Pi(\mu,\nu)$ is the set of all probability measures on $\R^d\times \R^d$ with marginals~$(\mu,\nu)$.
Practical applications of OT often use regularization to reduce the computational burden and improve the statistical sample complexity. The most prominent example is entropic optimal transport (EOT), adding a relative entropy (Kullback--Leibler) penalization between the coupling $\pi$ and the product $\mu\otimes \nu$ of the marginals: 
\begin{equation} \label{eq:EOT_primal}    
{\rm EOT}_\varepsilon(\mu,\nu) := \inf_{\pi\in \Pi(\mu,\nu)} \left\{\int \|x-y\|^2 d\pi(x,y) + \varepsilon {\rm KL}(\pi|\mu\otimes \nu)\right\},
\end{equation}
where  ${\rm KL}(\mu|\nu)=  \int \log (\frac{d\mu}{d\nu}) d\mu$ if $\mu \ll \nu $ and  ${\rm KL}(\mu|\nu)=+\infty$ otherwise. Rooted in the Schr\"odinger problem \cite{Schrodinger.31}, EOT was popularized in modern times by \cite{cuturi2013sinkhorn} as a tool to approximate the OT cost, based on the fact that Sinkhorn's algorithm \cite{Sinkhorn.64} for the dual EOT problem converges linearly (e.g., \cite{FranklinLorenz.89,ChenGeorgiouPavon.16,Carlier.21,GhosalNutz.22,ConfortiDurmusGreco.23,Eckstein.25,ChizatDelalandeVaskevicius.25}). See also \cite{Leonard.14,PeyreCuturi.19,Nutz.20} for background. Another important property for applications, established by \cite{genevay2019sample, MenaWeed.19} and others, is that EOT avoids the statistical curse of dimensionality suffered by OT; see also~\cite{delbarrio.et.al.2025.survey,balakrishnan.2025.statisticalinferenceoptimaltransport,ChewiNilesWeedRigollet.25} for surveys.

While the KL penalty entails these desirable properties, it also forces the optimal EOT coupling to have full support, while unregularized optimal transport is typically sparse, and weak regularization can cause numerical instability in EOT
through exponentially large and small scalings in the dual problem~\cite{Schmitzer.19}. More recently, starting with \cite{Muzellec.2017.AAAI,blondel18quadratic,EssidSolomon.18,LorenzMannsMeyer.21}, a growing literature explores alternative regularizations, especially quadratically regularized optimal transport (QOT, using the $L^2$ penalty) and more generally $L^p$-regularized optimal transport ($p$-ROT) for $1<p<\infty$, defined as
\begin{equation}
    \label{eq:lq_ot}
{\rm ROT}_{\eps,p}(\mu,\nu):=  \inf_{\pi\in \Pi(\mu,\nu)}\left\{\int \|x-y\|^2 d\pi(x,\,y) + \varepsilon\left\|\frac{d\pi}{d(\mu\otimes \nu)}\right\|_{L^p(\mu\otimes \nu)}^p\right\}
\end{equation}
where $\|\frac{d\mu}{d\nu}\|_{L^p(\nu)}=+\infty$ if $\mu \not \ll \nu$.
It produces sparse couplings, as observed empirically in \cite{blondel18quadratic,EssidSolomon.18,BayraktarEckstein.2025.BJ}
and analyzed theoretically in \cite{Nutz.2024,WieselXu.24,GonzalezSanzNutz2024.Scalar,gonzalezsanz2026sharplocalsparsityregularized}. It also replaces the exponential structure in the EOT dual by a polynomial, thereby avoiding exponential scaling. Recent works have shown linear convergence of several dual algorithms~\cite{GonzalezSanzNutzRiveros.25,GonzalezSanzNutzRiveros.26} as well as parametric sample complexity~\cite{GonzalezSanzEcksteinNutz.25,GonzalezSanzDelBarrioNutz.25,HanWiesel.26,GonzalezSanzNutzStromme.26}, further supporting the use of $p$-ROT as an alternative to EOT in applications (e.g., \cite{zhang2023manifold}).

For $p>1$, $L^p$-regularization is equivalent to regularizing by the $p$-Tsallis divergence, up to rescaling $\eps$ to $\eps/(p-1)$ and adding a constant. The $p$-Tsallis divergence converges to ${\rm KL}$ as $p\to1^+$, so that EOT can be seen as a limiting case of $L^p$-regularization after rescaling.

\paragraph{Contributions.}
Our two main contributions are (i)~the exact asymptotics for 
EOT and $p$-ROT ($1<p<\infty$) under very mild conditions and (ii)~a novel quantization technique enabling these derivations. Our results substantially generalize the existing results for EOT and for quadratically regularized optimal transport (QOT, meaning that $p=2$) while also connecting them with the natural interpolation via $p\in(1,2)$. Moreover, our technique gives a unified approach to derive all these asymptotics. %

Specifically, we show under mild assumptions on $\mu$ and $\nu$ that
\begin{align}
\label{eq:eot_limit_intro}
\lim_{\varepsilon\to0^+}\frac{{\rm EOT}_\eps(\mu,\nu) - {\rm OT}(\mu,\nu) + \frac{d}{2} \eps\,\log(\pi\varepsilon)}{\varepsilon} &= -\frac{{\int \log f_\mu\, d\mu+\int\log f_\nu \,d\nu}}{2},
\\ 
\label{eq:prot_limit_intro}
 \lim_{\varepsilon \to 0^+}\frac{{\rm ROT}_{\varepsilon,p}(\mu,\nu)-{\rm OT}(\mu,\nu)}{\varepsilon^{\frac{2}{d(p-1)+2}}}&= \mathfrak C_{d,p}\cdot\mathcal{K}(\mu,\nu,p),\quad \text{for all}\ p>1,
\end{align}
where we write $f_P$ for the density of a measure $P$, $\mathfrak C_{d,p}$ is the dimensional constant~\eqref{eq:def-B-C-constants}, and $\mathcal{K}(\mu,\nu,p)$ is the value of the explicit integral~\eqref{eq:def-K-constant} involving the marginals and the optimal transport map between them. While quite different at first glance, we will see in \cref{sec:entropic-endpoint} that the expansions~\eqref{eq:eot_limit_intro} and~\eqref{eq:prot_limit_intro} are consistent in the same way as $L^p$-regularization converges to relative entropy for $p\to1^+$ after appropriate rescaling.

Beyond identifying these limits, our results substantially enlarge the range in which sharp small-regularization asymptotics are known. For EOT and $p$-ROT with \(1<p\le2\), the marginals are only assumed to be absolutely continuous with a finite \(2+\beta\) moment for some~$\beta>0$ (in addition to standard assumptions on the optimal transport map). In contrast to previous results, no compactness, boundary regularity, or upper or lower bounds on the densities are required. We even cover the situation where the right-hand sides of~\eqref{eq:eot_limit_intro} and~\eqref{eq:prot_limit_intro} are infinite. 

While the proof of the lower bound proceeds by duality, the main obstacle is the upper bound, as it requires an approximately optimal coupling matching the marginal constraints exactly. At the local level, the dual problem selects a canonical profile: Gaussian for EOT and Barenblatt for $p$-ROT. Naturally, these profiles agree with the ones appearing in the earlier EOT and QOT analyses of \cite{pal2019difference,GarrizmolinaElAl.2024} in the corresponding special cases. The main novelty in our approach is how the local profile is made into a genuine coupling.  Rather than forcing the full approximate density to satisfy the marginal constraints by a problem-specific global correction, we use the profile only to build a subcoupling whose marginals are dominated by the prescribed ones, and then complete the remaining marginal mass by a quantization construction in a second step. This separates the computation of the local asymptotic constant from the enforcement of the marginal constraints. Our approach explains the Gaussian-to-Barenblatt transition in a unified way and provides a flexible route to sharp asymptotics for other convex regularizations.

While the general recipe is the same for all regularizations, it turns out that there are two distinct regimes, depending on the convexity properties of the regularizer. The first regime includes EOT and $p$-ROT with \(p\leq2\) (and in particular all previously analyzed cases); here the quantization argument can be carried out under very general conditions. By contrast, $p$-ROT with \(p>2\) falls into the second regime; here, the quantization has to be carried out with cells of comparable size, and that is why additional conditions on the marginals are necessary when the limit is finite. See \cref{sec:general_recipe} for a more detailed sketch of the approach.

\paragraph{Relation to previous results.}
The asymptotics for the special cases of EOT and QOT were previously found under stronger assumptions and with different techniques. The limit~\eqref{eq:eot_limit_intro} for EOT was first shown by \cite{pal2019difference} when the marginals $\mu,\nu$ have compact supports satisfying a uniform interior cone condition, and moreover $\mu,\nu$ admit densities $f_\mu,f_\nu$ which are continuous and uniformly bounded away from zero on the supports. The proof constructs an asymptotically Gaussian perturbation of the optimal transport map and then uses a Markov construction to enforce the terminal marginal. %
In the Schr\"odinger bridge setting, the corresponding short-time asymptotics were shown by \cite{ConfortiTamanini.19}  on a complete Riemannian manifold under a Bakry--\'Emery curvature lower bound, with compactly supported marginals having bounded densities
relative to the reversible measure, and under differentiability of the rescaled entropic cost and of
the integrated Fisher information. The proof proceeds through
the Benamou--Brenier formulation and Fisher-information estimates. 

The approach of the present paper allows us to derive \eqref{eq:eot_limit_intro} for absolutely continuous marginals with finite moments of order $2+\beta$ and without conditions on the support. In particular, this is the first such result allowing for marginals with unbounded support. Moreover, our result holds even when the right-hand side of \eqref{eq:eot_limit_intro} has the value $-\infty$ (cf.\ \cref{rk:finitenessOfLimit}).

For QOT, \cite{GarrizmolinaElAl.2024} showed the special case $p=2$ of \eqref{eq:prot_limit_intro} when the marginals $\mu,\nu$ have compact supports with a uniform interior cone condition and admit densities $f_\mu,f_\nu$ which are H\"older continuous and bounded away from zero. Their
lower bound is based on a Barenblatt-type dual ansatz, while the upper
bound enforces the marginal constraints through a global transport
correction, supplemented by separate ``frame'' and ``stained-glass''
couplings near the boundary. 

The common ingredient in the prior works is the identification of the local
Gaussian or Barenblatt profile, whereas the principal difference lies in the
enforcement of the marginal constraints: the upper-bound proofs in
\cite{pal2019difference,GarrizmolinaElAl.2024} combine the local profile with
problem-specific global corrections---a Markov-kernel composition in the
former case and transport together with boundary patching in the latter.
Our method separates these two roles clearly: the local profile determines the
asymptotic constant, while a quantization-based completion is added to the local construction to fit the prescribed marginals. This quantization approach is a key innovation in the present work, permitting a substantially more general setting and a unified treatment of different regularizations. Arguably, it is also simpler than the previous techniques. Naturally, the local profiles remain Gaussian and Barenblatt-type, respectively, for EOT and QOT, while the Barenblatt profile is further extended to general $p>1$. Our proof approach is sketched in more detail in \cref{sec:general_recipe}.

For \(p\neq2\), we are not aware of previous results identifying the limit \eqref{eq:prot_limit_intro}. Coarser results, however, exist for more general transport costs and regularizations. For infinitesimally twisted transport costs (which include $\|x-y\|^2$) and compactly supported marginals with bounded densities, \cite{Carlier-Pegon-Tamanini} showed upper and lower bounds for the quotient on the left-hand side of~\eqref{eq:eot_limit_intro}; however it remains open whether there are matching bounds, i.e., whether the limit exists and what its value might be. Their proof uses block approximations and an integral Alexandrov estimate for the upper bound, and a quadratic detachment based on Minty's trick for the lower bound. These techniques are likely not sharp enough to obtain the limit in \eqref{eq:eot_limit_intro}. For general \(f\)-divergence regularizations,
\cite{Eckstein.Nutz.Quantization.MOR} obtained nonasymptotic rates through the  ``shadow'' coupling technique and the arguments of \cite{Carlier-Pegon-Tamanini}. For quadratic cost and \(L^p\)-regularization, their bounds yield the exponent \(\frac{2}{d(p-1)+2}\), and for EOT, the term $(d/2) \varepsilon\log(1/\varepsilon)$. Again, the bounds are not sharp enough to obtain the limits in \eqref{eq:eot_limit_intro}--\eqref{eq:prot_limit_intro}. Different rates arise in the context of the distance cost $\|x-y\|$, where the unregularized OT problem has multiple solutions. Here, the second-order asymptotics of EOT were recently found by \cite{NutzZhong.26}.

The aforementioned works are preceded by a large literature establishing the (qualitative) vanishing-regularization limit in various settings, which we do not review here. See, e.g., \cite{Leonard.12,Leonard.14,CarlierDuvalPeyreSchmitzer.17} and the references therein.

\paragraph{Organization of the paper.} \Cref{Section:main-results} states the main results, discusses the relation between the EOT and $p$-ROT limits as $p\to1^+$, and sketches the proof idea.  \Cref{Section:lower} proves the lower bound for $p$-ROT and EOT. \Cref{Section:main-upper} presents the proof of the matching upper bound, which is our main technical contribution. \Cref{se:omitted-proofs} gathers omitted proofs.

\section{Main results}\label{Section:main-results}

\subsection{General notation} \label{sec:Notation}
We denote by \(\cL^d\) the Lebesgue measure on
\(\mathbb R^d\); we also use $|A|=\cL^d(A)$ where convenient.   For \(x\in\mathbb R^d\) and \(R>0\), we write
\(\ball(x,R):=\{y\in\mathbb R^d:\|x-y\|<R\}\). For a set
\(E\subset\mathbb R^d\), interior and boundary are denoted by
\(\operatorname{int}(E)\) and \(\partial E\), respectively, and
\(x+E:=\{x+y:y\in E\}\). For a positive-definite matrix \(A\), we denote
\(\|x\|_A^2:=\langle x,  A x\rangle\), and we write
\(\omega_{d-1}:=\mathcal H^{d-1}(\mathbb S^{d-1})\) for the
\((d-1)\)-dimensional Hausdorff measure of the unit sphere
\(\mathbb S^{d-1}\subset\mathbb R^d\). We use
\(0\log0:=0\).

Fix a Borel set $S\subseteq \mathbb{R}^d$. Let $\mathcal{M}^+(S)$ be the set of finite positive Borel measures supported on $S$; we identify $\mathcal{M}^+(\mathbb{R}^d\times \mathbb{R}^n)$ with $\mathcal{M}^+(\mathbb{R}^{d+n})$. For \(q>0\), \(\mathcal P_q(S)\) denotes the set of Borel
probability measures on $S$ with finite \(q\)-th moment, and
\(\mathcal P_q^{\rm ac}(S)\) denotes the subset of $\mathcal P_q(S)$ consisting of measures $\mu$ which are
absolutely continuous with respect to \(\cL^d\), denoted \(\mu\ll\cL^d\). In that case, we write \(f_\mu\) for the density, so that
\(\mu=f_\mu\,d\cL^d\). For a Borel set \(E\), \(\mu|_E\) denotes the
restriction of \(\mu\) to \(E\).

\subsection{Assumptions}

The first two assumptions on the marginals $\mu,\nu$ are used for all of our main results.

\begin{enumerate}[label={\rm (A\arabic*)}, leftmargin=*, itemsep=.35em]
 
    \item \label{assumption:hoelder_densities}
    The probability measures $\mu$ and $\nu$ satisfy $\mu,\nu \in \mathcal{P}_2^{\rm ac}(\mathbb{R}^d)$. Therefore, there is a $\mu$-a.s.\ unique gradient $\nabla\varphi$ of a convex function~$\varphi:\R^d\to\R\cup\{\infty\}$ pushing~$\mu$ forward to $\nu$.
    \item \label{assumption:elliptic_phi}
    The potential $\varphi$ can be chosen in ${\cal C}^{2}(\R^d)$ and such that
    \begin{equation}
        \label{eq:Elliptic}
                \sigma_m(\varphi)I
        \preceq
        \nabla^2\varphi(y)
        \preceq
        \sigma_M(\varphi)I,
        \qquad y\in\R^d
    \end{equation}
for some constants
    \(0<\sigma_m(\varphi)\leq \sigma_M(\varphi)<\infty\).
\end{enumerate}

Assumption~\ref{assumption:elliptic_phi} holds, for example, when \(\mu=\nu\), with
\(\varphi(x)=\|x\|^2/2\).  It is also the standard conclusion of
the classical regularity theory for the Monge--Amp\`ere equation under appropriate smoothness, convexity, and two-sided density bounds. Since the arguments below merely use the existence of $\sigma_m(\varphi)$ and $\sigma_M(\varphi)$, we state this condition directly.

The following two additional assumptions will be used only for $p$-ROT with $p>2$.
 
\begin{enumerate}[label={\rm (H\arabic*)}, leftmargin=*, itemsep=.35em]
    \item \label{assumption:cone-2}
   The interiors $\Omega_0={\rm int}\,{\rm supp}\,\mu$ and $\Omega_1={\rm int}\,{\rm supp}\,\nu$ are bounded sets with Lipschitz boundaries\footnote{We say that an open set \(Q\subset \mathbb R^d\) has
Lipschitz boundary if there exist constants \(L,r_0>0\) such that
for every \(x_0\in \partial Q\), there exist an orthogonal map \(R:\mathbb R^d\to\mathbb R^d\) and an $L$-Lipschitz function \(g:\mathbb R^{d-1}\to\mathbb R\) with $g(0)=0$ 
for which the image of \(Q\) in the coordinates 
\(R(x-x_0)=(x',x_d)\in\mathbb R^{d-1}\times\mathbb R\) satisfies
\[
R\bigl((Q-x_0)\cap \ball(0,r_0)\bigr)
=
\left\{
(x',x_d)\in \ball(0,r_0):
x_d>g(x')
\right\}.
\]
The constants \(L\) and $r_0$ are called the Lipschitz constant and the localization radius of~$Q$.} satisfying $\operatorname{supp}\mu=\overline{\Omega_0}$ and $
\operatorname{supp}\nu=\overline{\Omega_1}$. 
    \item \label{assumption:hoelder_densities-2}
    The densities  $f_\mu$ and $f_\nu$ are continuous and uniformly bounded and bounded away from zero on ${\rm supp}(\mu)$ and ${\rm supp}(\nu)$.  
\end{enumerate}

\subsection{Results}

We first state the asymptotics for $p$-ROT (where $p>1$). The limit involves the quantity
\begin{align}\label{eq:def-K-constant}
\mathcal{K}(\mu,\nu,p):= \int {[f_\nu(\nabla \varphi(x))f_\mu(x)]^{-\frac{p-1}{d(p-1)+2}}}d\mu(x) \in [0,\infty],
\end{align}
as well as the dimensional (and $p$-dependent) constants 
\begin{align}\label{eq:def-B-C-constants}
\mathfrak B_{d,p}:=\frac{p^{d/2}}{2}\omega_{d-1}B\left(\frac{p}{p-1},\frac d2\right),
\qquad
\mathfrak C_{d,p}:=\frac{p(d(p-1)+2)}{d(p-1)+2p}\,\mathfrak B_{d,p}^{-\frac{2(p-1)}{d(p-1)+2}},
\end{align}
where $\omega_{d-1}$ is the $(d-1)$-dimensional Hausdorff measure of the unit sphere $\mathbb S^{d-1}$ and $B(\cdot,\cdot)$ is the beta function. The (in)finiteness of $\mathcal K(\mu,\nu,p)$ is discussed in \cref{rk:finitenessOfLimit} below.

\begin{theorem}[$p$-ROT]\label{theorem:main-p} Let $1<p<\infty$ and let \ref{assumption:hoelder_densities} and \ref{assumption:elliptic_phi} hold. The limit 
    \begin{equation}
    \label{eq:limit-in-theorem}
    \lim_{\varepsilon \to 0^+}\frac{{\rm ROT}_{\varepsilon,p}(\mu,\nu)-{\rm OT}(\mu,\nu)}{\varepsilon^{\frac{2}{d(p-1)+2}}}= \mathfrak C_{d,p}\cdot\mathcal{K}(\mu,\nu,p)
    \end{equation}
    holds in any of the following cases:
    \begin{enumerate}
    \item $p\in(1,2]$ and $\mu,\nu\in\mathcal P_{2+\beta}(\mathbb R^d)$ for some $\beta>0$, 
    \item  $p>2$ and Assumptions~\ref{assumption:cone-2} and \ref{assumption:hoelder_densities-2} hold,
    \item \(\mathcal K(\mu,\nu,p)=\infty\).
    \end{enumerate}
    In cases (i) and (ii), we have $\mathcal K(\mu,\nu,p)\in\R$ and hence the limit \eqref{eq:limit-in-theorem} is finite.
\end{theorem}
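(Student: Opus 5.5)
The proof splits into a lower bound via duality and an upper bound via an explicit coupling. Set $\delta:=\eps^{\frac{2}{d(p-1)+2}}$.

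\textbf{Lower bound.} I would use the dual of ${\rm ROT}_{\eps,p}$:
\[
\sup_{f,g}\Big\{\int f\,d\mu+\int g\,d\nu-\eps\int \Big(\tfrac{(f(x)+g(y)-\|x-y\|^2)_+}{p\eps}\Big)^{\frac{p}{p-1}}(p-1)\,d\mu\otimes\nu\Big\}.
\]
Here the integrand is the convex conjugate of $t\mapsto t^p$, suitably rescaled. I would plug in the Brenier-type potentials $f=\|x\|^2-2\varphi(x)+a_\eps(x)$ and $g=\|y\|^2-2\varphi^*(y)$, with an additive local correction $a_\eps(x)=\lambda(x)\,\delta$ to be optimized pointwise.

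By \eqref{eq:Elliptic} and Taylor expansion around $y=\nabla\varphi(x)$, we have $f+g-\|x-y\|^2\approx a_\eps(x)-\|y-\nabla\varphi(x)\|^2_{(\nabla^2\varphi(x))^{-1}}$. The penalty integral in $y$ then becomes a Barenblatt integral. After the change of variables $y=\nabla\varphi(x)+\delta^{1/2}z$, it equals $\delta\cdot f_\nu(\nabla\varphi(x))\det(\nabla^2\varphi(x))^{1/2}\,\lambda(x)^{\frac{p}{p-1}+\frac d2}$ times a constant expressed through $\mathfrak B_{d,p}$. Using Monge--Ampère, $\det\nabla^2\varphi=f_\mu/f_\nu\circ\nabla\varphi$, the factor becomes $\sqrt{f_\mu f_\nu(\nabla\varphi)}$.

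Maximizing $\lambda-c\,\lambda^{\frac p{p-1}+\frac d2}\sqrt{f_\mu f_\nu}$ pointwise gives exactly the integrand $[f_\nu(\nabla\varphi)f_\mu]^{-\frac{p-1}{d(p-1)+2}}$ times $\mathfrak C_{d,p}$. To make this rigorous, I would truncate $\lambda$ (bounded, compactly supported, continuous approximations), use the $\mathcal C^2$ bounds to control Taylor errors uniformly on compacts, and apply Fatou or monotone convergence. This also yields the case $\mathcal K=\infty$ via truncation, since then the lower bound already diverges and no upper bound is needed, which settles case (iii).

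\textbf{Upper bound.} I would build a subcoupling $\gamma_\eps(dx,dy)=f_\mu(x)\,\rho_{\eps,x}(y)\,dx\,dy$ from the optimal dual Barenblatt profile. Here $\rho_{\eps,x}$ is proportional to $(\lambda(x)\delta-\|y-\nabla\varphi(x)\|^2_{(\nabla^2\varphi)^{-1}})_+^{1/(p-1)}$, restricted to a large compact set where the densities are regular, and scaled down by a factor $(1-\eta)$. The goal is that its marginals are dominated by $\mu$ and $\nu$, using continuity of the densities after a Lusin-type approximation. Its cost plus penalty matches $\mathfrak C_{d,p}\mathcal K\,\delta$ up to $o(\delta)+O(\eta\delta)$.

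The residual masses $\mu-\gamma^1_\eps$ and $\nu-\gamma^2_\eps$ have total mass $O(\eta)$ plus tails. They would be coupled by a quantization construction: partition space into cells of size $h\sim\delta^{1/2}$, couple residual mass in each cell to residual mass in the image cell under $\nabla\varphi$ via the product coupling (or a shadow/block coupling). The transport cost beyond OT is then $O(h^2)\times$ mass, and the $L^p$ penalty is of order mass$\cdot h^{-d(p-1)}\eps$, both of order $\delta\cdot$ mass. Tails are handled with the $2+\beta$ moment through a crude product coupling.

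\textbf{Main obstacle.} The hard step is controlling the penalty of the completion, because $L^p$ norms are not additive across the two pieces. For $p\le2$, I would use the elementary inequality $(a+b)^p\le a^p+b^p+C(a^{p-1}b+ab^{p-1})$ together with the fact that the residual density is small, so the cross terms contribute $o(\delta)$. This is where $p\le2$ and the mild moment assumption suffice, even with unbounded densities, since cells may be adapted to local density.

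For $p>2$, the cross terms are dangerous, and the residual density must be comparable to the main one on every cell. This forces cells of uniform size, requiring \ref{assumption:hoelder_densities-2} for density bounds and \ref{assumption:cone-2} so that boundary cells still contain mass of order $h^d$. Finally, letting $\eta\to0$ and the truncations exhaust the space gives matching upper and lower bounds, and finiteness of $\mathcal K$ in cases (i)--(ii) follows from Hölder with the moment bound, respectively from the density bounds.
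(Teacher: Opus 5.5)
Your lower bound follows the paper's argument: perturb the first Kantorovich potential at scale $\varepsilon^{2/(d(p-1)+2)}$, reduce the dual penalty to a Barenblatt integral, and optimize pointwise. One step fails as written, though: continuous truncations of $\lambda$ combined with Fatou. The penalty enters with a minus sign, so you need an \emph{upper} bound on it. If $\lambda>0$ on an open set where $f_\mu\notin L^2$, the penalty divided by $\varepsilon^{2/(d(p-1)+2)}$ tends to $+\infty$, and the bound becomes useless. The paper instead uses the discontinuous correction $C_{\varepsilon,p}\mathbf 1_{K_\gamma}$, where $K_\gamma$ is a compact set on which $f_\mu$ and $f_\nu\circ\nabla\varphi$ lie in $[\gamma,\gamma^{-1}]$, together with Lebesgue differentiation and a truncation $f_\mu\wedge M$ of the inner density.

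The genuine gap is in the upper bound. Pull back your leading subcoupling $f_\mu(x)\rho_{\varepsilon,x}(y)\,dx\,dy$ via $y=\nabla\varphi(x')$ to get $\tilde\gamma_\varepsilon$. This plan is not symmetric in $(x,x')$, because the profile depends on $x$ through $\lambda(x)$, $f_\mu(x)$ and $\nabla^2\varphi(x)$, and it is truncated. Hence the residuals $\mu-\gamma^1_\varepsilon$ and $(\nabla\varphi)^{-1}_\#(\nu-\gamma^2_\varepsilon)$ are different measures. Their masses on a cell $C$ differ by the net flux $\tilde\gamma_\varepsilon(C^c\times C)-\tilde\gamma_\varepsilon(C\times C^c)$, which is generally nonzero. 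So the product coupling of the residual in $C$ with the residual in $\nabla\varphi(C)$ is not a coupling. Fixing the imbalance requires moving residual mass between cells, and you never estimate that cost. Against a background residual density of order $\eta$, an imbalance caused by a drift of size $h$ can cost of order $h^2/\eta$ rather than $O(\eta h^2)$. Dominating the second marginal by $\nu$ via Lusin has the same issue unless you also restrict $y$.

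The missing idea is the paper's symmetrization. Work in source--source variables and take a simple function $f_n=\sum_i a_i\mathbf 1_{E_i}\le f_\mu$, with coefficients frozen on each $E_i$. Then use the even kernel $a_i^2K_{i,\varepsilon}(x'-x)\mathbf 1_{E_i}(x)\mathbf 1_{E_i}(x')$ with $\int K_{i,\varepsilon}=1/a_i$. Both marginals of the leading part then coincide and are $\le f_n\le f_\mu$, with no continuity argument needed. The residual is a single measure $\lambda_{\rm rem}$, coupled with itself, and $\mathrm{Id}\times\nabla\varphi$ maps the result into $\Pi(\mu,\nu)$.

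Your treatment of the tails also fails. A single product coupling of the tail residual has Bregman cost of order its second moment. For a fixed truncation this is a fixed positive number, which is infinitely larger than $\varepsilon^{2/(d(p-1)+2)}$. Also, cells of fixed size $h$ covering an unbounded region are infinitely many. For $p\le 2$ the paper quantizes the \emph{entire} residual with an $n$-point quantizer:
\begin{itemize}
\item the transport cost is bounded by $e^2_{n,2}\le C(m+M_{2+\beta})n^{-2/d}$;
\item the penalty of the blockwise product coupling is bounded by Jensen, $\sum_i m_i^{2-p}\le n^{p-1}m^{2-p}$, using concavity of $t\mapsto t^{2-p}$; this bound does not depend on how small individual cells are;
\item optimizing over $n$ gives $C(M+m)^{1-2/D}m^{2(2-p)/D}\varepsilon^{2/D}$ with $D=d(p-1)+2$, which is negligible once the residual mass and moment vanish.
\end{itemize}

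This also corrects your diagnosis for $p>2$. Cross terms are harmless for every $p$, via $(a+b)^p\le(1+\delta')a^p+C_{p,\delta'}b^p$ (or H\"older). What actually breaks is the Jensen step, since $m_i^{2-p}$ now penalizes small cells. That is why the paper scales the leading part by $(1-\theta)$, so that the residual density is at least $\theta f_i$. It then quantizes with cells of comparable mass under (H1)--(H2), with a separate boundary-layer lemma.
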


To state the main result for EOT, we define the constant
\begin{align}\label{eq:def-K-constant-EOT}
    \mathcal K_{\rm EOT}(\mu,\nu)& :=-\frac12
\int_{\mathbb R^d}
\log\left[
f_\mu(x)f_\nu(\nabla\varphi(x))
\right]d\mu(x)= -\frac{{\rm Ent}(\mu)+{\rm Ent}(\nu)}{2} \in [-\infty,\infty),
\end{align}
where ${\rm Ent}(\alpha):=\int \log
f_\alpha
\, d\alpha$ for $\alpha \in \mathcal{P}^{\rm ac}_2(\mathbb{R}^d)$. We have ${\rm Ent}(\alpha)\in (-\infty,\infty]$ due to the finite second moment, as can be seen by comparing with a Gaussian. 

\begin{theorem}[EOT]\label{theorem:main-EOT} Let \ref{assumption:hoelder_densities} and \ref{assumption:elliptic_phi} hold, and let $\mu,\nu\in\mathcal P_{2+\beta}(\mathbb R^d)$ for some $\beta>0$. Then 
    \begin{equation}
        \label{eq:EOT_main_result}
\lim_{\varepsilon\to 0^+}
\frac{
{\rm EOT}_\varepsilon(\mu,\nu)
-
{\rm OT}(\mu,\nu)
+
\frac d2\,\varepsilon\log(\pi\varepsilon)
}{\varepsilon}
=
\mathcal K_{\rm EOT}(\mu,\nu).
    \end{equation}
\end{theorem}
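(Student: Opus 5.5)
We prove \eqref{eq:EOT_main_result} by establishing matching lower and upper bounds for the rescaled quantity
\[
\Delta_\eps:=\frac{{\rm EOT}_\eps(\mu,\nu)-{\rm OT}(\mu,\nu)}{\eps}+\frac d2\log(\pi\eps).
\]
Both bounds rest on the Brenier potential $\varphi$ from \ref{assumption:hoelder_densities}--\ref{assumption:elliptic_phi} and its conjugate $\varphi^*$. We use the identity
\[
\|x-y\|^2-\|x\|^2-\|y\|^2=-2\langle x,y\rangle
\qquad\text{and}\qquad
\varphi(x)+\varphi^*(y)-\langle x,y\rangle\ge0,
\]
with equality exactly when $y=\nabla\varphi(x)$.

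\textbf{Lower bound.} We use weak duality:
\[
{\rm EOT}_\eps\ge \int u\,d\mu+\int v\,d\nu-\eps\int e^{(u\oplus v-c)/\eps}\,d\mu\otimes\nu+\eps,
\]
where $c(x,y)=\|x-y\|^2$. Take the Kantorovich pair $u_0=\|x\|^2-2\varphi$, $v_0=\|y\|^2-2\varphi^*$, so that $c-u_0\oplus v_0=2(\varphi+\varphi^*-\langle\cdot,\cdot\rangle)\ge0$. Perturb it to
\[
u=u_0-\eps\log\bigl(f_\mu\, a_\eps\bigr)+\dots,
\]
with a symmetric correction for $v$, choosing the normalization $a_\eps$ so that the exponential integral localizes. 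Near $y=\nabla\varphi(x)$, the convexity bounds in \eqref{eq:Elliptic} give
\[
2(\varphi(x)+\varphi^*(y)-\langle x,y\rangle)\approx \|y-\nabla\varphi(x)\|^2_{(\nabla^2\varphi(x))^{-1}} .
\]
This Gaussian integral produces $(\pi\eps)^{d/2}\det(\nabla^2\varphi(x))^{1/2}$. By Monge--Amp\`ere, $\det\nabla^2\varphi(x)=f_\mu(x)/f_\nu(\nabla\varphi(x))$, which combines into the term $-\frac12\log[f_\mu f_\nu\circ\nabla\varphi]$. The lower bound would be obtained by optimizing pointwise over the additive constant, using the convexity of $t\mapsto e^t$. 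Since $\log f_\mu$ may be unbounded, I would truncate by taking $\max(\log f_\mu,-M)$ (and similarly for $\nu$) and let $M\to\infty$ at the end by monotone convergence. This also covers the case $\mathcal K_{\rm EOT}=-\infty$, where only the upper bound has content; conversely, the lower bound is vacuous when the liminf would have to be $+\infty$. The $2+\beta$ moment is used to control the tails of the Gaussian integral when $\nabla^2\varphi$ is only locally Taylor-expanded: uniform continuity fails globally, so I would localize on a large ball $\ball(0,R)$ and control the complement by Markov's inequality.

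\textbf{Upper bound.} This is the main obstacle, and I would follow the quantization recipe. Fix $\delta>0$ and partition a large ball into cubes $Q_i$ of side $h$ with $\eps\ll h^2$, on which $f_\mu$, $f_\nu\circ\nabla\varphi$ and $\nabla^2\varphi$ are nearly constant after discarding a set of small $\mu$-mass. On each cube define the subcoupling
\[
\gamma(dx,dy)=(1-\delta)\,f_\mu(x)\,\mathbf 1_{Q_i'}(x)\,G_{\eps,i}\bigl(y-\nabla\varphi(x)\bigr)\,dx\,dy,
\]
where $Q_i'$ is a slightly shrunk cube and $G_{\eps,i}$ is the Gaussian kernel with covariance $\frac\eps2\nabla^2\varphi(x_i)$, truncated at radius $\eps^{1/2-\eta}$. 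Its first marginal is at most $\mu$. Its second marginal is close to $(1-\delta)\nu$ by a change of variables, so a multiplicative adjustment slightly below one keeps it at most $\nu$, since $f_\nu$ is locally nearly constant. The cost of $\gamma$ plus $\eps\,{\rm KL}$ is computed exactly: the transport cost equals
\[
\int\|x-\nabla\varphi(x)\|^2\,d\gamma+\eps\,\tfrac d2\cdot(\text{trace term})
\]
up to higher order, and the entropy term gives $-\frac d2\log(\pi\eps)$ plus $-\frac12\log[f_\mu f_\nu\circ\nabla\varphi]$, up to $O(\delta)$.

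The residual marginals $\mu-\gamma_1$ and $\nu-\gamma_2$ have mass $O(\delta)$ plus the tail mass. These I would couple by quantization: match residual mass cube-to-cube along $\nabla\varphi$ using product couplings within cells, and send far-away mass via an independent coupling. The entropy of a product coupling on cells of size $h$ costs roughly $\eps\log(1/(h^{2d}f\cdot f))\cdot\text{mass}$, which is $O(\delta\,\eps\log(1/\eps))$ — too large unless $\delta\to0$ faster. I therefore expect the hardest step to be choosing $\delta=\delta(\eps)\to0$ and $h=h(\eps)$ jointly so that the residual contributes $o(\eps)$. The transport-cost excess of the residual is $O(\text{mass}\cdot h)$, and the tail contributions are controlled by the $2+\beta$ moment (the moment gives $o(\eps)$-small cost outside large balls with $R=R(\eps)$ growing polynomially). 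Finally, I would compare the total with ${\rm OT}(\mu,\nu)=\int\|x-\nabla\varphi\|^2d\mu$, take $\limsup$, and then let the localization parameters tend to zero, using Fatou or monotone convergence on $-\frac12\int\log[f_\mu f_\nu\circ\nabla\varphi]\,d\mu$, which is allowed to be $-\infty$.
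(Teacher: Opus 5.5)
Your lower bound follows the paper's route: a dual test pair built from the Kantorovich potentials perturbed at order $\varepsilon$, Gaussian localization, then Monge--Amp\`ere. It needs neither the moment assumption nor localization to balls. If you perturb only the first potential, pull back $y=\nabla\varphi(x')$ and integrate first in $x$, the exponential term becomes $\int S_\varepsilon\,d\mu$ with $S_\varepsilon\le(\sigma_M(\varphi)/\sigma_m(\varphi))^{d/2}$ and $S_\varepsilon\to1$ pointwise, so dominated convergence finishes.

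The upper bound, however, has a genuine gap. Your leading block lives in $(x,y)$-variables with weight $f_\mu(x)$. The step ``a multiplicative factor slightly below one keeps the second marginal below $\nu$ because $f_\nu$ is locally nearly constant'' fails for the densities the theorem allows. For instance, let $\mu=\nu$ have density proportional to $\mathbf 1_{[0,1]^d\setminus K}$, with $K$ closed, nowhere dense and of positive Lebesgue measure. After removing the $\mu$-null set $K$, $f_\mu$ is exactly constant on every cube. Yet the second marginal of your block puts positive mass on the $\nu$-null set $K$ for every $\varepsilon$, so no constant factor restores $\gamma_2\le\nu$. The paper avoids this by pulling back the second variable and using a symmetric sub-coupling of $\mu$ with itself, $\sum_i a_i^2\mathbf 1_{E_i}(x)\mathbf 1_{E_i}(x')G_{\varepsilon,a_i,A_i}(x-x')\,dx\,dx'$. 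Here $\sum_i a_i\mathbf 1_{E_i}\le f_\mu$ is a simple function on small, arbitrary Borel cells, and $\int G_{\varepsilon,a_i,A_i}=1/a_i$. Both marginals then coincide and are automatically dominated by $a_i\mathbf 1_{E_i}\le f_\mu$. The density with respect to $\mu\otimes\mu$ is at most $G_{\varepsilon,a_i,A_i}$, so the KL term reduces to the explicit Gaussian computation with $\log a_i$ in place of $\log f_\mu$.

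The second missing idea concerns the residual, and this is where your plan actually breaks down. The residual does not have to contribute $o(\varepsilon)$. The leading part has mass $1-m$ and produces only $-\frac d2(1-m)\varepsilon\log(\pi\varepsilon)$, so a residual of \emph{fixed} mass $m$ is allowed $-\frac d2 m\,\varepsilon\log\varepsilon+O(\varepsilon)$.

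In the paper the residual is a single measure $\lambda_{\rm rem}$ coupled with itself, so it can be quantized into $N=\lceil\varepsilon^{-d/2}\rceil$ nearest-neighbour cells with product couplings. Jensen gives ${\rm KL}\le m\log N+m|\log m|$ without any density bounds. The $(2+\beta)$-moment quantization estimate gives transport cost $\lesssim(m+M)N^{-2/d}$ with $M=\int\|x\|^{2+\beta}d\lambda_{\rm rem}$. Altogether the residual costs $-\frac d2 m\,\varepsilon\log\varepsilon+O(\varepsilon)(m+M+m|\log m|)$, which is exactly the Gaussian rate per unit mass. The log-sum inequality, with the binary entropy $\mathfrak h(m)$, controls the KL of the overlapping sum. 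The identity $\sum_i m_i^\varepsilon+m=1$ then recombines the $\varepsilon\log(\pi\varepsilon)$ terms. Hence $m$ can be held fixed as $\varepsilon\downarrow0$ and removed only afterwards ($n\to\infty$, then $R\to\infty$).

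Your alternative cannot be closed under the hypotheses. It matches two different residual measures cube-to-cube on cells of side $h\gg\sqrt\varepsilon$, which costs order $h^2\gg\varepsilon$ per unit mass. It sends far-away mass by an independent coupling, which costs order one per unit mass. And it tunes $\delta(\varepsilon)$, $h(\varepsilon)$, $R(\varepsilon)$ jointly, which needs quantitative input the assumptions do not provide: a modulus of continuity of $\nabla^2\varphi$ on balls of radius $R(\varepsilon)\to\infty$, or rates at which $f_\mu$ and $f_\nu$ become nearly constant on cubes.
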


We emphasize that~\eqref{eq:EOT_main_result} holds regardless of whether $\mathcal K_{\rm EOT}(\mu,\nu)$ is finite. See also~\cref{rk:finitenessOfLimit}.

\begin{remark}[(In)finiteness of the constants]\label{rk:finitenessOfLimit}

\begin{enumerate}[label={\rm (\roman*)}, leftmargin=*]
    \item $\mathcal K_{\rm EOT}(\mu,\nu)$ is finite if $\mu,\nu$ have bounded densities. Compact support does not suffice: there exist compactly supported $\mu,\nu$ satisfying \cref{assumption:hoelder_densities,assumption:elliptic_phi} such that $\mathcal K_{\rm EOT}(\mu,\nu)=-\infty$.

    \item Let $p\in(1,2]$. If $\mu,\nu\in\mathcal P^{\rm ac}_{2+\beta}(\mathbb R^d)$ for some $\beta>0$, then $\mathcal K(\mu,\nu,p)<\infty$. Positivity of $\beta$ cannot be dropped: there exist $\mu,\nu$ satisfying \cref{assumption:hoelder_densities,assumption:elliptic_phi} such that $\mathcal K(\mu,\nu,2)=\infty$.

    \item Let $p>2$. If $\mu,\nu\in\mathcal P^{\rm ac}(\mathbb R^d)$ satisfy \cref{assumption:cone-2,assumption:hoelder_densities-2}, then $\mathcal K(\mu,\nu,p)<\infty$.\footnote{More generally, lower bounds $f_\mu,f_\nu\geq m>0$ on the respective supports already imply $\mathcal K(\mu,\nu,p)<\infty$, for any $p>1$.} Finite $2+\beta$ moments alone do not imply finiteness of $\mathcal K(\mu,\nu,p)$ for $p>2$: for every $p>2$ there exist $\beta>0$ and $\mu,\nu\in\mathcal P^{\rm ac}_{2+\beta}(\mathbb R^d)$ satisfying \cref{assumption:hoelder_densities,assumption:elliptic_phi} such that $\mathcal K(\mu,\nu,p)=\infty$. Compact support is sufficient if $d\geq2$, and also if $d=1$ and $p\leq3$; but not if $d=1$ and $p>3$, even if the densities are bounded and continuous on their supports.
\end{enumerate}
All counterexamples above may be chosen with $\mu=\nu$, so that the Brenier map $\nabla \varphi$ is the identity and \cref{assumption:elliptic_phi} holds with $\varphi(x)=\|x\|^2/2$.
\end{remark}

The proofs/counterexamples for \cref{rk:finitenessOfLimit} are detailed in \cref{se:omitted-proofs}. The counterexamples in \cref{rk:finitenessOfLimit}(iii), together with \cref{theorem:main-p}(iii), contradict the assertion of \cite[Corollary~3.14]{Eckstein.Nutz.Quantization.MOR} when $p>2$. This is due to a glitch in~\cite{Eckstein.Nutz.Quantization.MOR}: as its derivation uses concavity of $x^{p-1}$, the corollary should assume $1<p\leq 2$ (rather than just $p>1$, as stated in~\cite{Eckstein.Nutz.Quantization.MOR}).

\begin{remark}
The $2+\beta$ moment assumption in \cref{theorem:main-p}(i) and \cref{theorem:main-EOT} is related to the quadratic transport cost rather than the choice of regularizer. The assumption is used in the quantization step in the proof of the upper bound, where the exponent \(2\) is dictated by the transport cost that must be controlled when completing the marginal defect. The slack \(\beta\) is used only to make the tail contribution of the remainder coupling (cf. \cref{sec:general_recipe}) negligible at the relevant scale. 
\end{remark}

\subsection{Relation between the $p$-ROT and EOT limits}
\label{sec:entropic-endpoint}

The \(L^p\)-regularized problem considered in \cref{theorem:main-p} uses the
unnormalized regularizer \(\Upsilon(r)=r^p\), which degenerates for \(p\to1^+\). To connect to EOT, we use the Tsallis divergence
\[
\Upsilon_p^{\rm Ts}(r)
:=
\frac{r^p-pr+p-1}{p-1},
\qquad r\geq0,
\]
which is equivalent to \(L^p\)-regularization up to constants. Specifically, note that  \(\Upsilon_p^{\rm Ts}(1)=0\), \((\Upsilon_p^{\rm Ts})'(1)=0\), and
\(\Upsilon_p^{\rm Ts}(r)\to r\log r-r+1\) as \(p\to1^+\). Denote by \({\rm TROT}_{\varepsilon,p}(\mu,\nu)\) the resulting regularized transport problem, i.e.,
\[
{\rm TROT}_{\varepsilon,p}(\mu,\nu)
:=
\inf_{\pi\in\Pi(\mu,\nu)}
\left\{
\int \|x-y\|^2\,d\pi(x,y)
+
\varepsilon
\int
\Upsilon_p^{\rm Ts}
\left(
\frac{d\pi}{d(\mu\otimes\nu)}
\right)
d(\mu\otimes\nu)
\right\}.
\]
Then 
\[
{\rm TROT}_{\varepsilon,p}(\mu,\nu)
=
{\rm ROT}_{\varepsilon/(p-1),p}(\mu,\nu)
-
\frac{\varepsilon}{p-1}
\]
for any \(p>1\) and \(\varepsilon>0\), so that the limit~\eqref{eq:limit-in-theorem} translates to 
\begin{align}\label{eq:TROT-asymptotic}
\lim_{\varepsilon\downarrow0}
\frac{
{\rm TROT}_{\varepsilon,p}(\mu,\nu)
-
{\rm OT}(\mu,\nu)
+
\frac{\varepsilon}{p-1}
}{
\bigl(\frac{\varepsilon}{p-1}\bigr)^{\frac{2}{d(p-1)+2}}
}
=
\mathfrak C_{d,p}\mathcal K(\mu,\nu,p).
\end{align}
The following shows that the asymptotic~\eqref{eq:TROT-asymptotic} is consistent with the EOT asymptotic~\eqref{eq:EOT_main_result} in the limit $p\to1^+$.

\begin{lemma}[Consistency as \(p\to1^+\)]
\label{le:consistency-p-to-one} Assume \ref{assumption:hoelder_densities} and \ref{assumption:elliptic_phi} hold, that $\mu,\nu\in\mathcal P_{2+\beta}(\mathbb R^d)$ for some $\beta>0$, and 
\begin{equation}
\label{eq:finite_LlogL}
    \int |\log f_\mu|\,d\mu+\int|\log f_\nu|\,d\nu<\infty.
\end{equation}
Then, for every fixed \(\varepsilon>0\),
\[
\lim_{p\to1^+}
\left\{
\left(\frac{\varepsilon}{p-1}\right)^{\frac{2}{d(p-1)+2}}
\mathfrak C_{d,p} \mathcal K(\mu,\nu,p)
-
\frac{\varepsilon}{p-1}
\right\}
=
-\frac d2\,\varepsilon\log(\pi\varepsilon)
+
\varepsilon\mathcal K_{\rm EOT}(\mu,\nu).
\]
\end{lemma}

To be clear, \cref{le:consistency-p-to-one} does not imply that one can rigorously infer the EOT asymptotics~\eqref{eq:EOT_main_result} from~\eqref{eq:TROT-asymptotic} (and hence from \cref{theorem:main-p}). That would require additional uniformity properties in~$p$ to justify a double limit ($p\to1^+$ and $\eps\to0^+$). While such an argument is possible, it requires stronger conditions and a more complicated proof than the direct derivation of \cref{theorem:main-EOT} that we give below.

\subsection{Proof idea}\label{sec:general_recipe}

We sketch the proof idea for a general convex regularizer \(\Upsilon:[0,\infty)\to(-\infty,\infty]\) and start by formally deriving the local profile of the ROT solution for small~$\eps$. Let
\[
\Xi(s):=\sup_{r\geq0}\{rs-\Upsilon(r)\},\qquad s\in\mathbb R
\]
be the convex conjugate and consider the ROT problem
\[
{\rm ROT}_{\varepsilon,\Upsilon}(\mu,\nu)
:=
\inf_{\pi\in\Pi(\mu,\nu)}
\left\{
\int \|x-y\|^2\,d\pi(x,y)
+
\varepsilon
\int
\Upsilon\left(\frac{d\pi}{d(\mu\otimes\nu)}\right)
\,d(\mu\otimes\nu)
\right\}.
\]
Assuming that Fenchel--Rockafellar duality is applicable, the corresponding dual is
\[
\sup_{f,g}
\bigg\{
\int f\,d\mu+
\int g\,d\nu
-
\varepsilon
\int
\Xi\left(
\frac{f(x)+g(y)-\|x-y\|^2}{\varepsilon}
\right)
\,d\mu(x)d\nu(y)
\bigg\},
\]
and the associated optimal density w.r.t.\ \(\mu\otimes\nu\) should satisfy the first-order condition
\begin{equation}
h_\varepsilon(x,y)
=
\Xi'\left(
\frac{f_\varepsilon(x)+g_\varepsilon(y)-\|x-y\|^2}{\varepsilon}
\right)
\end{equation}
where \(\Xi'\) is the derivative (understood in the subgradient sense if $\Xi$ is not differentiable). We now expand this formula near the unregularized optimizer.  Write
\[
T:=\nabla\varphi,
\qquad
A(x):=\nabla^2\varphi(x),
\]
so that \(T_\#\mu=\nu\).  As the Brenier plan is concentrated on the graph of \(T\), it is natural to pull the second variable back by writing \(y=T(x')\).  Let
\[
a(x):=\|x\|^2-2\varphi(x),
\qquad
b(y):=\|y\|^2-2\varphi^*(y).
\]
Then \((a,b)\) is a pair of Kantorovich potentials for the (unregularized) OT problem with quadratic cost and
\begin{equation}
\|x-y\|^2-a(x)-b(y)
=
2\mathbb D(x,y),
\qquad
\mathbb D(x,y):=\varphi(x)+\varphi^*(y)-\langle x,y\rangle .
\end{equation}
We define approximate ROT potentials \((f_\varepsilon,g_\varepsilon)\) by perturbing only the first Kantorovich potential, say\footnote{For the \(p\)-ROT problem, the proof below uses a scaled convention in which the quadratic cost and Kantorovich potentials are divided by \(p\); this is only a change of normalization and accounts for the factors \(1/p\) in the \(p\)-ROT formulas.  In the present proof sketch we keep the unscaled quantities.}
\[
f_\varepsilon(x)=a(x)+\Lambda_\varepsilon(x),
\qquad
g_\varepsilon(y)=b(y).
\]
This ansatz yields an approximate pulled-back density w.r.t.\ $\mu\otimes\mu$,
\begin{equation}
\tilde h_\varepsilon(x,x')
:=
\Xi'\left(
\frac{\Lambda_\varepsilon(x)-2\mathbb D(x,T(x'))}{\varepsilon}
\right).
\end{equation}
The first marginal constraint for this approximate density would then require, for \(\mu\)-a.e.\ \(x\),
\begin{equation}
\int
\Xi'\left(
\frac{\Lambda_\varepsilon(x)-2\mathbb D(x,T(x'))}{\varepsilon}
\right)
\,d\mu(x')
=
1.
\end{equation}
In the proof, the pulled-back density is used in two different ways: as a dual ansatz for the lower bound and, after localization and freezing of coefficients, as the leading block of an admissible coupling for the upper bound.

The local profile follows by replacing the Bregman divergence with its second-order expansion.  For \(x'\) close to \(x\),
\begin{equation}
2\mathbb D(x,T(x'))
=
\|x-x'\|_{A(x)}^2+o(\|x-x'\|^2).
\end{equation}
Freezing \(f_\mu\) and \(A\) at \(x\) gives the model kernel
\begin{equation}
\label{eq:density-general}
K_{\varepsilon,x}(z)
:=
\Xi'\left(
\frac{c_\varepsilon(x)-\|z\|_{A(x)}^2}{\varepsilon}
\right),
\qquad z=x'-x,
\end{equation}
where \(c_\varepsilon(x)\) is chosen so that
\begin{equation}
\label{eq:density-eqn-to-solve}
f_\mu(x)
\int_{\mathbb R^d}K_{\varepsilon,x}(z)\,dz
=1.
\end{equation}

For the entropic and $L^p$ penalties, \eqref{eq:density-eqn-to-solve} can be solved explicitly. Indeed, for \(\Upsilon(r)=r\log r\), one has \(\Xi'(s)=e^{s-1}\).  Solving~\eqref{eq:density-eqn-to-solve} then yields the Gaussian profile
\begin{equation}
K_{\varepsilon,x}^{\rm EOT}(z)
=
\frac{\sqrt{\det A(x)}}
{f_\mu(x)(\pi\varepsilon)^{d/2}}
\exp\left(-\frac{\|z\|_{A(x)}^2}{\varepsilon}\right).
\end{equation}
For \(\Upsilon(r)=r^p\) with \(p>1\), one has \(\Xi'(s)=(s_+/p)^{1/(p-1)}\).  Equivalently, writing \(c_{\varepsilon,p}(x)=p C_{\varepsilon,p}(x)\), the normalized local kernel is
\begin{equation}
K_{\varepsilon,x}^{(p)}(z)
=
\varepsilon^{-\frac1{p-1}}
\left(
 C_{\varepsilon,p}(x)-\frac1p\|z\|_{A(x)}^2
\right)_+^{\frac1{p-1}},
\end{equation}
where
\begin{equation}
C_{\varepsilon,p}(x)
=
\left[
\frac{\sqrt{\det A(x)}}{\mathfrak B_{d,p} f_\mu(x)}
\right]^{\frac{2(p-1)}{d(p-1)+2}}
\varepsilon^{\frac{2}{d(p-1)+2}}.
\end{equation}

The same computation gives the constants in the main theorems.  By the Monge--Amp\`ere identity
\[
\det A(x)=\frac{f_\mu(x)}{f_\nu(T(x))}
\qquad \mu\text{-a.e.},
\]
the combined local transport and regularization contribution of the Gaussian profile is
\[
\varepsilon
\left[
-\frac12\log\bigl(f_\mu(x)f_\nu(T(x))\bigr)
-
\frac d2\log(\pi\varepsilon)
\right]d\mu(x),
\]
which is the integrand in \cref{theorem:main-EOT}.  Similarly, the combined local contribution of the Barenblatt profile is
\[
\varepsilon^{\frac{2}{d(p-1)+2}}
\mathfrak C_{d,p}
\bigl[f_\mu(x)f_\nu(T(x))\bigr]^{-\frac{p-1}{d(p-1)+2}}
\,d\mu(x),
\]
which yields the constant in \cref{theorem:main-p} after integration.

We next indicate how the formal profile above is converted into matching lower and upper bounds.
The lower bound is obtained from the dual problem; it is the easier (and more classical) part of the proof since the dual problem is unconstrained. By weak duality, evaluating the
dual objective at any admissible pair gives a lower bound for
\({\rm ROT}_{\varepsilon,\Upsilon}(\mu,\nu)\).  Roughly speaking, we use the perturbative ansatz $(f_\eps,g_\eps)$
described above for the admissible pair. %
More precisely, to deal with potentially unbounded marginal densities, the argument is first localized to compact sets on which
\(f_\mu\) and \(f_\nu\circ T\) are bounded above and away from zero. On such sets, the quadratic expansion localizes the dual penalty to a shrinking neighborhood of the graph of \(T\), and the calculation reduces to the Gaussian and Barenblatt integrals above. An exhaustion argument then removes the localization. In the \(p\)-ROT case this also handles the possibility \(\mathcal K(\mu,\nu,p)=\infty\). (In the EOT case, the lower bound is trivial if \(\mathcal K_{\rm EOT}(\mu,\nu)=-\infty\).)

The upper bound is the main difficulty of the proof: we need to construct an approximately optimal coupling matching the marginals $(\mu,\nu)$ exactly. We first work in source-source variables $(x,x')$ and then push forward by \(\operatorname{Id}\times T\).  If \(\pi\in\Pi(\mu,\mu)\) and \(\gamma:=(\operatorname{Id}\times T)_\#\pi\), then
\begin{equation}
\int \|x-y\|^2\,d\gamma(x,y)-{\rm OT}(\mu,\nu)
=
2\int \mathbb D(x,T(x'))\,d\pi(x,x'),
\end{equation}
thus it suffices to construct a coupling of \(\mu\) with itself whose mass is concentrated near the diagonal at the scale of the relevant local profile.

We separate the construction of this coupling into a leading part (responsible for the leading term) and a remainder. The leading part is built from an approximation to the source density from below by simple functions.  For each $n\in\N$, choose a finite partition \((E_i)_{i\in I_n}\) of ``small'' cells and corresponding constants \(a_i>0\) such that
\[
f_n:=\sum_{i\in I_n} a_i\mathbf 1_{E_i}\leq f_\mu \qquad\text{and}\qquad \lim_{n\to\infty}f_n = f_\mu.
\]
For each cell $E_i$, choose \(x_i\in E_i\), set \(A_i:=A(x_i)\), and let \(K_{i,\varepsilon}\) be the Gaussian or Barenblatt profile with \(A(x)\) and \(f_\mu(x)\) replaced by \(A_i\) and \(a_i\), which is normalized by
\[
\int_{\mathbb R^d}K_{i,\varepsilon}(z)\,dz=\frac1{a_i}
\]
according to~\eqref{eq:density-eqn-to-solve}. Define
\begin{equation}
\pi^{\varepsilon}_{n,{\rm lead}}
:=
\sum_i
 a_i^2
 K_{i,\varepsilon}(x'-x)
 \mathbf 1_{E_i}(x)\mathbf 1_{E_i}(x')
\,d\cL^d(x)d\cL^d(x').
\end{equation}
Since the kernels are even, the two marginals of \(\pi^{\varepsilon}_{n,{\rm lead}}\) coincide.  Moreover, restriction to \(E_i\times E_i\) can only decrease the total mass of the kernel, and hence
\[
(\operatorname{pr}_1)_\#\pi^{\varepsilon}_{n,{\rm lead}}
=
(\operatorname{pr}_2)_\#\pi^{\varepsilon}_{n,{\rm lead}}
\leq
f_n\,d\cL^d
\leq
\mu,
\]
where $\operatorname{pr}_j$ denotes the projection onto the $j$th marginal. Consequently,
\[
\lambda_{n,{\rm rem}}
:=
\mu-(\operatorname{pr}_1)_\#\pi^{\varepsilon}_{n,{\rm lead}}
=
\mu-(\operatorname{pr}_2)_\#\pi^{\varepsilon}_{n,{\rm lead}}
\]
is a \emph{positive} finite measure.  Coupling this residual measure with itself by any \(\pi^{\varepsilon}_{n,{\rm rem}}\in\Pi(\lambda_{n,{\rm rem}},\lambda_{n,{\rm rem}})\) gives
\[
\pi^\varepsilon_n
:=
\pi^{\varepsilon}_{n,{\rm lead}}
+
\pi^{\varepsilon}_{n,{\rm rem}}\in\Pi(\mu,\mu),
\qquad \text{and finally} \qquad
\gamma^\varepsilon_n:=(\operatorname{Id}\times T)_\#\pi^\varepsilon_n
\in\Pi(\mu,\nu).
\]
For fixed \(n\), the leading part has the asymptotics predicted by the aforementioned Gaussian or Barenblatt calculation.  Letting \(n\to\infty\) then recovers the integral constant in the main result. 

The remaining key issue is to construct the completion \(\pi^{\varepsilon}_{n,{\rm rem}}\) of the coupling so that its contribution is negligible at the relevant scale. Satisfying the hard marginal constraint is where previous approaches became technically involved and restrictive assumptions were needed: In the EOT case, \cite{pal2019difference} uses convolution arguments that are suitable for Gaussian kernels but do not adapt to the Barenblatt profiles. In the QOT case \cite{GarrizmolinaElAl.2024}, the approximate coupling is corrected by a push-forward with a delicate construction. In the present work, our guiding idea is to use the local profile only for the leading part $\pi^{\varepsilon}_{n,{\rm lead}}$ of the coupling, following the same recipe for all regularizers, and then complete the remaining marginal mass by constructing $\pi^{\varepsilon}_{n,{\rm rem}}$ using quantization (or block approximation) in a second step. For the latter, it turns out that there are two different regimes for the rigorous treatment, depending on the convexity of the regularizer. The first regime includes EOT and $p$-ROT for $1<p\leq2$ (and in particular the regularizers of the earlier works \cite{pal2019difference,GarrizmolinaElAl.2024} for EOT and QOT). A second, novel regime appears for $p>2$.

The defining feature of the first regime is that \(\Upsilon(r)=r\ell(r)\) on \((0,\infty)\), with \(\ell\) \emph{concave}. Note that this includes \(\Upsilon(r)=r\log r\) and \(\Upsilon(r)=r^p\) for \(1<p\leq2\) (but not for $p>2$).  If a finite measure of total mass \(M\) is partitioned into \(N\) cells of masses \(m_i\), and the product coupling is used on each cell, the regularization cost is controlled by
\[
\sum_i m_i^2\Upsilon\left(\frac1{m_i}\right)
=
\sum_i m_i\ell\left(\frac1{m_i}\right)
\leq
M\ell\left(\frac{N}{M}\right),
\]
where the inequality follows from Jensen's inequality due to \(\sum_i (m_i/M)(1/m_i)=N/M\) and $\ell$ being concave. This estimate will be used for $\pi^{\varepsilon}_{n,{\rm rem}}$. Note that it depends only on the total residual mass and on the number of cells, not on a lower bound for the individual \(m_i\)'s. In the proof, this fact enables a weighted quantization argument (which we do not fully detail here) under the moment assumption \(\mathcal P_{2+\beta}\) for the first regime, without requiring compact supports or lower bounds on the densities.

For the second regime, where \(p>2\), the above Jensen argument fails because \(\ell(r)=r^{p-1}\) is now convex instead of concave. Very small cells may then make \(\sum_i m_i^{2-p}\) large, so the residual mass has to be completed using cells of comparable mass. This is why the case $p>2$ in \cref{theorem:main-p} assumes bounded supports with Lipschitz boundaries and densities bounded above and below when $\mathcal K(\mu,\nu,p)$ is finite.  Under these hypotheses, cubical partitions, together with the boundary estimates for Lipschitz domains, give cells with masses comparable to \(N^{-1}\) up to constants. The regularization cost is then of order \(\varepsilon N^{p-1}\), which balances the quantization cost \(N^{-2/d}\) at the Barenblatt scale.

\section{Lower bound}\label{Section:lower}
As mentioned in the proof sketch (\cref{sec:general_recipe}), the lower bound is proved by duality. We construct test functions for the dual problem by perturbing the Kantorovich potentials at the scale suggested by the local profile. The main estimates then reduce to a local analysis near the optimal transport graph, where the Bregman divergence associated with the Brenier potential is quadratic to leading order.

\subsection{Auxiliary lemmas}\label{Section:auxiliary}

We first provide two auxiliary results for the lower bound: weak duality for the regularized problems, and elementary estimates on the Bregman divergence. The former provides the dual lower bound, while the latter identifies the local quadratic approximation that produces the sharp constant.

\subsubsection{Weak duality}

The first proposition states the standard weak duality for $p$-ROT and EOT.

\begin{proposition}\label{proposition:dual} Let $\mu,\nu\in\mathcal P_2(\mathbb R^d)$. For $p>1$ and $q=\frac{p}{p-1}$, we have
\begin{multline}
\label{eq:lq_ot_dual}
{\rm ROT}_{\varepsilon,p}(\mu,\nu)
\geq 
p\sup_{f\in L^1(\mu),\,g\in L^1(\nu)}
\bigg[
\int f\,d\mu+\int g\,d\nu \\
-\frac{1}{q\varepsilon^{\frac{1}{p-1}}}
\int
\left(
f\oplus g-\frac{\|x-y\|^2}{p}
\right)_+^q
d(\mu\otimes\nu)
\bigg].
\end{multline}
For EOT, we have 
\begin{multline*}
    {\rm EOT}_\varepsilon(\mu,\nu)
\geq
\sup_{f\in L^1(\mu),\,g\in L^1(\nu)} \bigg[\int f\,d\mu+\int g\,d\nu
\\
-
\varepsilon
\int
\exp\left(
\frac{f\oplus g-\|x-y\|^2}{\varepsilon}-1
\right)
\,d(\mu\otimes\nu)\bigg].
\end{multline*}
\end{proposition}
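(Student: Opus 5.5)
The plan is to prove both inequalities by pointwise Fenchel--Young and then integrate against an arbitrary feasible coupling. Fix $\pi\in\Pi(\mu,\nu)$ with finite objective, so $\pi\ll\mu\otimes\nu$ with density $h:=d\pi/d(\mu\otimes\nu)$, and fix $f\in L^1(\mu)$, $g\in L^1(\nu)$. Since $\pi$ has marginals $\mu,\nu$ and $f,g$ are integrable, we have
\[
\int f\,d\mu+\int g\,d\nu=\int (f\oplus g)\,h\,d(\mu\otimes\nu).
\]
This identity lets us rewrite the dual objective as an integral against $\mu\otimes\nu$ of a pointwise expression in $h$.

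\emph{$p$-ROT.} Set $s(x,y):=f(x)+g(y)-\|x-y\|^2/p$. Young's inequality, in the form $rs\le \varepsilon r^p/p+\frac{1}{q}\varepsilon^{-1/(p-1)}s_+^q$ for $r\ge0$ and $s\in\mathbb R$, is obtained by scaling: write $rs\le r s_+$ and apply $ab\le a^p/p+b^q/q$ with $a=\varepsilon^{1/p}r$ and $b=\varepsilon^{-1/p}s_+$. Note that $b^q=\varepsilon^{-q/p}s_+^q=\varepsilon^{-1/(p-1)}s_+^q$. Applying this with $r=h$, multiplying by $p$, and integrating gives
\[
p\int (f\oplus g)h\,d(\mu\otimes\nu)-\int\|x-y\|^2 d\pi\le \varepsilon\int h^p\,d(\mu\otimes\nu)+\frac{p}{q\varepsilon^{1/(p-1)}}\int s_+^q\,d(\mu\otimes\nu).
\]
Rearranging, the right side of \eqref{eq:lq_ot_dual} evaluated at $(f,g)$ is at most the primal objective at $\pi$. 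Taking the infimum over $\pi$ and the supremum over $(f,g)$ concludes.

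\emph{EOT.} Use $rt\le r\log r+e^{t-1}$ for $r\ge0$, $t\in\mathbb R$, which is the conjugacy of $r\log r$ and $e^{t-1}$. Take $r=h$ and $t=(f\oplus g-\|x-y\|^2)/\varepsilon$, multiply by $\varepsilon$, and integrate. The $r\log r$ term integrates to ${\rm KL}(\pi|\mu\otimes\nu)$, which again gives the claimed bound.

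The only delicate point, and the main obstacle, is integrability when splitting the integrals. The term $\int(f\oplus g)h$ is finite by the marginal identity. The term $\int\|x-y\|^2d\pi$ is finite since $\mu,\nu\in\mathcal P_2$. The regularization term is finite by the choice of $\pi$; in the KL case its negative part is bounded because $r\log r\ge -1/e$ and the reference measure is a probability. The remaining penalty term is nonnegative, possibly $+\infty$, in which case the inequality is trivial. So the pointwise inequality can be integrated and rearranged without any $\infty-\infty$ ambiguity.
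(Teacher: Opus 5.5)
Your proof is correct, and it is the standard Fenchel--Young weak-duality argument that the paper omits as ``standard'': you integrate the pointwise inequalities $p\,rs\le \varepsilon r^p+\frac{p}{q}\varepsilon^{-1/(p-1)}s_+^q$ and $rt\le r\log r+e^{t-1}$ against $\mu\otimes\nu$ with $r=d\pi/d(\mu\otimes\nu)$, and the scaling constants check out. Your integrability bookkeeping is also sound: $f\oplus g\in L^1(\pi)$, $\int\|x-y\|^2\,d\pi<\infty$, and the remaining penalty terms are nonnegative, so no $\infty-\infty$ issue arises.
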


The proof is standard and hence omitted. 

\subsubsection{Estimates on the Bregman divergence}
Recall the potential $\varphi$ from \cref{assumption:elliptic_phi}. We define the Bregman divergence as 
\begin{equation}
\label{eq:D_func_definition}
    \mathbb{D}(x,y):=\varphi(x)+\varphi^*(y)-\langle x, y\rangle .
\end{equation}
 The following lemma is \cite[Lemma 2.2]{GarrizmolinaElAl.2024}. Qualitatively, it states that the Bregman divergence~\(\mathbb D\) between $x$ and $y$ is comparable to the squared Euclidean distance between $x$ and the pull-back of $y$ along the optimal transport map.
\begin{lemma}
\label{lemma:Bregman_ptwisebd}
    Let \ref{assumption:elliptic_phi} hold. Then 
    $$
    \frac{\sigma_m(\varphi)}{2}\|x-x'\|^2\leq \mathbb{D}(x,\, \nabla \varphi(x'))\leq \frac{\sigma_M(\varphi)}{2}\|x-x'\|^2
    $$
and for every compact set $K\subset\R^d$ there exists $\omega:\R_+\to \R_+$ such that $\lim_{t\to0}\omega(t)= 0$ and
\begin{equation}
    \label{eq:bregman_is_norm}
    \left|2\mathbb{D}(x,\, \nabla \varphi(x'))-\|x-x'\|^2_{\nabla^2\varphi(x)}\right|\leq \omega(\|x-x'\|) \|x-x'\|^{2}, \qquad x,x'\in K.
\end{equation}
\end{lemma}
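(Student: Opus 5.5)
We prove the two claims of \cref{lemma:Bregman_ptwisebd} from Taylor expansions of $\varphi$ and $\varphi^*$. The starting point is an identity for the divergence. Under \ref{assumption:elliptic_phi}, $\varphi$ is $\mathcal C^2$ and strongly convex, so $\nabla\varphi$ is a bijection of $\R^d$ (a $\mathcal C^1$ diffeomorphism). Fenchel's equality at $y=\nabla\varphi(x')$ reads $\varphi^*(\nabla\varphi(x'))=\langle x',\nabla\varphi(x')\rangle-\varphi(x')$. Substituting this into \eqref{eq:D_func_definition} gives
\[
\mathbb D(x,\nabla\varphi(x'))=\varphi(x)-\varphi(x')-\langle \nabla\varphi(x'),x-x'\rangle .
\]
This is the usual Bregman divergence of $\varphi$ based at $x'$.

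\textbf{Two-sided bound.} Apply Taylor's formula with integral remainder along the segment from $x'$ to $x$:
\[
\mathbb D(x,\nabla\varphi(x'))=\int_0^1(1-t)\,\big\langle x-x',\nabla^2\varphi(x'+t(x-x'))(x-x')\big\rangle\,dt .
\]
By \eqref{eq:Elliptic}, the integrand lies between $\sigma_m(\varphi)\|x-x'\|^2(1-t)$ and $\sigma_M(\varphi)\|x-x'\|^2(1-t)$. Since $\int_0^1(1-t)\,dt=1/2$, the global two-sided bound follows.

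\textbf{Quadratic approximation.} From the same formula,
\[
2\mathbb D(x,\nabla\varphi(x'))-\|x-x'\|^2_{\nabla^2\varphi(x)}=2\int_0^1(1-t)\big\langle x-x',[\nabla^2\varphi(x'+t(x-x'))-\nabla^2\varphi(x)](x-x')\big\rangle dt .
\]
Every point $x'+t(x-x')$ lies within distance $\|x-x'\|$ of $x$. Fix a compact $K$ and let $K_1$ be the closed convex hull of $K$, which is compact and contains all these segments. Let $\omega_1$ be the modulus of continuity of $\nabla^2\varphi$ on $K_1$, which exists because $\nabla^2\varphi$ is uniformly continuous there. Then the right-hand side is bounded in absolute value by $\omega_1(\|x-x'\|)\|x-x'\|^2$, and we may take $\omega:=\omega_1$. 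Note that $\omega$ depends only on $K$, and the difference is measured against $\nabla^2\varphi(x)$, as the statement requires.

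The only point needing care is that the segments must stay inside a set where $\nabla^2\varphi$ is uniformly continuous; passing to the convex hull $K_1$ handles this. Everything else is routine.
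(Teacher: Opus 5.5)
Your proof is correct. The Fenchel identity $\varphi^*(\nabla\varphi(x'))=\langle x',\nabla\varphi(x')\rangle-\varphi(x')$ turns $\mathbb D(x,\nabla\varphi(x'))$ into the Bregman divergence of $\varphi$ at $x'$. Taylor's formula with integral remainder then gives the global two-sided bound from \eqref{eq:Elliptic}, and the nondecreasing modulus of continuity of $\nabla^2\varphi$ on the compact convex hull of $K$ gives \eqref{eq:bregman_is_norm}.

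The paper does not prove this lemma itself; it quotes it from \cite[Lemma~2.2]{GarrizmolinaElAl.2024}. Your self-contained Taylor-expansion argument is the standard way to establish it, so there is nothing to add.
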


We shall also use the following integrated form of the Bregman divergence.
\begin{lemma}\label{lemma:integrated-bregman-identity}
Let \(\pi\in\Pi(\mu,\mu)\) and set
\(\gamma:=(\operatorname{Id}\times\nabla\varphi)_\#\pi\in\Pi(\mu,\nu)\). Then
\[
\int \|x-y\|^2\,d\gamma(x,y)-{\rm OT}(\mu,\nu)
=
2\int \mathbb D(x,\nabla\varphi(x'))\,d\pi(x,x').
\]
\end{lemma}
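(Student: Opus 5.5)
The plan is to compute both sides explicitly and use the key facts that $\varphi^*$ is the Legendre transform and $\nabla\varphi$ pushes $\mu$ to $\nu$.

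\textbf{Step 1: expand the cost.} Since $\gamma=(\operatorname{Id}\times\nabla\varphi)_\#\pi$, we have
\[
\int\|x-y\|^2\,d\gamma=\int\|x-\nabla\varphi(x')\|^2\,d\pi(x,x').
\]
Expanding the square gives
\[
\int\|x\|^2\,d\mu+\int\|y\|^2\,d\nu-2\int\langle x,\nabla\varphi(x')\rangle\,d\pi(x,x'),
\]
because the second marginal of $\pi$ is $\mu$ and $(\nabla\varphi)_\#\mu=\nu$. All terms are finite since $\mu,\nu\in\mathcal P_2$.

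\textbf{Step 2: express ${\rm OT}(\mu,\nu)$ through the potentials.} By Brenier's theorem, the plan $(\operatorname{Id}\times\nabla\varphi)_\#\mu$ is optimal. Hence
\[
{\rm OT}(\mu,\nu)=\int\|x\|^2\,d\mu+\int\|y\|^2\,d\nu-2\int\langle x,\nabla\varphi(x)\rangle\,d\mu.
\]
By the Fenchel equality $\varphi(x)+\varphi^*(\nabla\varphi(x))=\langle x,\nabla\varphi(x)\rangle$, the last integral equals $\int\varphi\,d\mu+\int\varphi^*\,d\nu$.

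\textbf{Step 3: combine.} Subtracting and using the marginals of $\pi$ once more,
\[
\int\varphi\,d\mu+\int\varphi^*\,d\nu=\int\bigl[\varphi(x)+\varphi^*(\nabla\varphi(x'))\bigr]\,d\pi(x,x').
\]
The difference therefore equals
\[
2\int\bigl[\varphi(x)+\varphi^*(\nabla\varphi(x'))-\langle x,\nabla\varphi(x')\rangle\bigr]\,d\pi=2\int\mathbb D(x,\nabla\varphi(x'))\,d\pi.
\]

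\textbf{Main obstacle: integrability.} The only delicate point is justifying the splitting of the integrals, i.e.\ showing $\varphi\in L^1(\mu)$ and $\varphi^*\in L^1(\nu)$.

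Under \ref{assumption:elliptic_phi}, $\varphi$ has quadratic growth: $|\varphi(x)|\le C(1+\|x\|^2)$ because its Hessian is bounded. Likewise, $\varphi^*$ is $C^2$ with Hessian between $\sigma_M^{-1}I$ and $\sigma_m^{-1}I$, so it also has quadratic growth. Both are therefore integrable against measures with finite second moment.

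The mixed term satisfies $|\langle x,\nabla\varphi(x')\rangle|\le\|x\|\,C(1+\|x'\|)$, which is $\pi$-integrable by Cauchy--Schwarz. Finally, $\mathbb D\ge0$ (Fenchel--Young), so the right-hand side is well defined in any case.
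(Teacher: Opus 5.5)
Your proof is correct and is essentially the paper's argument. The paper takes the Kantorovich potentials $a(x)=\|x\|^2-2\varphi(x)$ and $b(y)=\|y\|^2-2\varphi^*(y)$, uses $\int a\,d\mu+\int b\,d\nu={\rm OT}(\mu,\nu)$ and the pointwise identity $\|x-y\|^2-a(x)-b(y)=2\mathbb D(x,y)$, and integrates against $\gamma$; you derive the same potential identity from optimality of the Brenier plan and the Fenchel equality instead of citing Kantorovich duality, and your integrability check via quadratic growth of $\varphi$ and $\varphi^*$ under \ref{assumption:elliptic_phi} is a correct detail the paper leaves implicit.
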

\begin{proof}
Let
\[
a(x):=\|x\|^2-2\varphi(x),
\qquad
b(y):=\|y\|^2-2\varphi^*(y).
\]
Then \((a,b)\) is a pair of Kantorovich potentials for the cost
\(\|x-y\|^2\), and therefore
\[
\int a\,d\mu+\int b\,d\nu={\rm OT}(\mu,\nu).
\]
Moreover,
\[
\|x-y\|^2-a(x)-b(y)
=
2\bigl(\varphi(x)+\varphi^*(y)-\langle x,y\rangle\bigr)
=
2\mathbb D(x,y).
\]
Since \(\gamma\in\Pi(\mu,\nu)\), it follows that
\[
\int \|x-y\|^2\,d\gamma-{\rm OT}(\mu,\nu)
=
\int\bigl(\|x-y\|^2-a(x)-b(y)\bigr)\,d\gamma
=
2\int \mathbb D(x,y)\,d\gamma.
\]
Finally, using \(\gamma=(\operatorname{Id}\times\nabla\varphi)_\#\pi\) gives the claimed identity.
\end{proof}

\proofreadhere

\subsection{Proof of the lower bound for $p$-ROT}

Next, we derive the lower bound for $p$-ROT; the analogue for EOT is proved in the subsequent subsection.

We begin by recording a standard change of variables for ease of reference.

\begin{lemma}[Monge--Amp\`ere change of variables]
\label{lemma:MA-change-of-variables}
Let \ref{assumption:hoelder_densities} and
\ref{assumption:elliptic_phi} hold, and set \(T:=\nabla\varphi\). Then \(T\)
is a \(C^1\)-diffeomorphism of \(\mathbb R^d\) onto itself and
\[
\sigma_m(\varphi)\|x-x'\|
\le
\|T(x)-T(x')\|
\le
\sigma_M(\varphi)\|x-x'\|,
\qquad x,x'\in\mathbb R^d .
\]
Moreover,
\begin{equation}
\label{eq:MA-density-identity}
f_\mu(x)
=
f_\nu(T(x))\det\nabla^2\varphi(x)
\qquad\text{for }\cL^d\text{-a.e. }x .
\end{equation}

Let \(\Psi(x,x')=(x,T(x'))\). If
\(\lambda\) is a nonnegative measure on $\mathbb R^d\times\mathbb R^d$ and
\(\widehat\lambda:=\Psi_\#\lambda\), then
$\lambda\ll \mu\otimes\mu$ if and only if $\widehat\lambda\ll \mu\otimes\nu$, and in that case,
\begin{equation}
\label{eq:RN-density-under-T}
\frac{d\widehat\lambda}{d(\mu\otimes\nu)}(\Psi(x,x'))
=
\frac{d\lambda}{d(\mu\otimes\mu)}(x,x')
\qquad
(\mu\otimes\mu)\text{-a.e.}
\end{equation}
In particular, the \(L^p\) and relative entropy penalties are invariant under \(\Psi\).
\end{lemma}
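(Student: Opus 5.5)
The lemma has three parts: the diffeomorphism and Lipschitz bounds, the Monge--Amp\`ere identity, and the transfer of Radon--Nikodym densities under \(\Psi\). I will prove them in that order.

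\emph{Diffeomorphism and Lipschitz bounds.} By \ref{assumption:elliptic_phi}, \(\varphi\in C^2\) with \(\sigma_m I\preceq\nabla^2\varphi\preceq\sigma_M I\). So \(T=\nabla\varphi\) is \(C^1\) with derivative \(\nabla^2\varphi\), which is invertible everywhere. Write
\[
T(x)-T(x')=\int_0^1\nabla^2\varphi(x'+t(x-x'))\,dt\,(x-x')=:M(x-x').
\]
The averaged matrix \(M\) is symmetric with spectrum in \([\sigma_m,\sigma_M]\), so \(\sigma_m\|x-x'\|\le\|M(x-x')\|\le\sigma_M\|x-x'\|\). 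This gives the two-sided bound and in particular injectivity. For surjectivity there are two routes. Strong convexity makes \(\varphi(x)-\langle x,y\rangle\) coercive, so it has a minimizer, where \(\nabla\varphi(x)=y\). Alternatively, \(T\) is a proper local diffeomorphism (Hadamard). The inverse is \(C^1\) by the inverse function theorem.

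\emph{Monge--Amp\`ere identity.} For a Borel set \(B\), the pushforward \(T_\#\mu=\nu\) and the change of variables formula for the \(C^1\) diffeomorphism \(T\) give
\[
\int_B f_\mu\,dx=\nu(T(B))=\int_{T(B)}f_\nu\,dy=\int_B f_\nu(T(x))\det\nabla^2\varphi(x)\,dx.
\]
Since \(B\) is arbitrary, \eqref{eq:MA-density-identity} holds a.e.

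\emph{Density transfer.} The map \(\Psi=\mathrm{Id}\times T\) is a bijection with \(\Psi_\#(\mu\otimes\mu)=\mu\otimes T_\#\mu=\mu\otimes\nu\). Since \(\Psi\) is a bimeasurable bijection, null sets correspond: \(\Psi_\#\lambda(E)=\lambda(\Psi^{-1}E)\) and \((\mu\otimes\nu)(E)=(\mu\otimes\mu)(\Psi^{-1}E)\). This gives the equivalence of absolute continuity in both directions. If \(\lambda=h\,d(\mu\otimes\mu)\), then for any Borel \(E\),
\[
\widehat\lambda(E)=\int_{\Psi^{-1}E}h\,d(\mu\otimes\mu)=\int_E h\circ\Psi^{-1}\,d(\mu\otimes\nu).
\]
Hence \(d\widehat\lambda/d(\mu\otimes\nu)=h\circ\Psi^{-1}\), which is \eqref{eq:RN-density-under-T}.

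\emph{Invariance of the penalties.} Integrating \(\Upsilon\) of the density, with \(\Upsilon(r)=r^p\) or \(r\log r\), against \(\mu\otimes\nu\) equals the same integral against \(\mu\otimes\mu\), by the pushforward identity \(\Psi_\#(\mu\otimes\mu)=\mu\otimes\nu\).

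The only mildly delicate step is global surjectivity of \(T\), which is where the uniform lower bound \(\sigma_m>0\) is needed. The remaining steps are routine.
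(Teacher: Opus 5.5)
Your proof is correct. The paper gives no argument of its own here: it introduces the lemma as ``a standard change of variables for ease of reference,'' so there is nothing to compare against. Your write-up supplies the expected details:
\begin{itemize}
\item the averaged Hessian $M=\int_0^1\nabla^2\varphi(x'+t(x-x'))\,dt$, with spectrum in $[\sigma_m(\varphi),\sigma_M(\varphi)]$, for the two-sided Lipschitz bound;
\item coercivity of the strongly convex function $\varphi-\langle\cdot,y\rangle$ for global surjectivity, and the inverse function theorem for the $C^1$ inverse;
\item the $C^1$ change-of-variables formula, together with injectivity of $T$ (so $\mu(B)=\nu(T(B))$), for the Monge--Amp\`ere identity;
\item bijectivity and bimeasurability of $\Psi$ for both directions of the absolute-continuity equivalence.
\end{itemize}

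Two steps are left implicit and deserve one line each. First, $h\circ\Psi^{-1}$ identifies the density only $(\mu\otimes\nu)$-a.e. Composing with $\Psi$ gives the stated $(\mu\otimes\mu)$-a.e.\ identity because $\Psi_\#(\mu\otimes\mu)=\mu\otimes\nu$, so $\Psi$ pulls back $(\mu\otimes\nu)$-null sets to $(\mu\otimes\mu)$-null sets. Second, when $\lambda\not\ll\mu\otimes\mu$, both penalties equal $+\infty$ by the equivalence you proved, so the invariance holds in that case as well.
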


Next, we provide an auxiliary lemma for the main proof.

\begin{lemma}\label{lem:lower-bound-lebesgue-diff}
Let \(p>1\) and set $q=\frac{p}{p-1}$. 
Let \(K\subset \R^d\) be compact and let \(E\subset K\) be measurable. Assume that,
for some \(\gamma\in(0,1)\),
\[
\gamma\leq f_\mu(x)\leq \gamma^{-1},
\qquad
\gamma\leq f_\nu(\nabla\varphi(x))\leq \gamma^{-1},
\qquad x\in E.
\]
For \(x\in E\), set
\[
C_{\varepsilon,p}(x)
:=
\varepsilon^{\frac{2}{d(p-1)+2}} \left[
\frac{\sqrt{\det \nabla^2\varphi(x)}}{\mathfrak B_{d,p}f_\mu(x)}
\right]^{\frac{2(p-1)}{d(p-1)+2}}
\]
and
\[
L_{\varepsilon,E}'
:=
\varepsilon^{-\frac1{p-1}}
\int_E
\int_{\mathbb R^d}
\left[
C_{\varepsilon,p}(x)
-
\frac{2}{p}\mathbb{D}(x,\nabla\varphi(x'))
\right]_+^q
\,d\mu(x')\,d\mu(x).
\]
Then
\[
\lim_{\varepsilon\downarrow0}
\varepsilon^{-\frac{2}{d(p-1)+2}}
L_{\varepsilon,E}'
=
\frac{2p}{d(p-1)+2p}\,
\mathfrak B_{d,p}^{-\frac{2(p-1)}{d(p-1)+2}}
\int_E
\left[
f_\nu(\nabla\varphi(x))f_\mu(x)
\right]^{-\frac{p-1}{d(p-1)+2}}
\,d\mu(x).
\]
\end{lemma}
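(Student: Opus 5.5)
Set $\alpha:=\frac{2}{d(p-1)+2}$ and write $C_{\varepsilon,p}(x)=\varepsilon^{\alpha}c(x)$, where $c(x):=[\sqrt{\det\nabla^2\varphi(x)}/(\mathfrak B_{d,p}f_\mu(x))]^{(p-1)\alpha}$. On $E$, $c$ is bounded above and below by positive constants depending only on $\gamma$, $\sigma_m(\varphi)$, $\sigma_M(\varphi)$, $d$ and $p$. The plan is to fix $x\in E$, compute the limit of the inner integral in $x'$ after blowing up at scale $\varepsilon^{\alpha/2}$, and then integrate over $E$ by dominated convergence.

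\textbf{Localization and change of variables.} By the lower bound in \cref{lemma:Bregman_ptwisebd}, the integrand vanishes unless $\frac{\sigma_m}{p}\|x-x'\|^2\le \varepsilon^{\alpha}c(x)$. Hence only $x'$ with $\|x'-x\|\le R\varepsilon^{\alpha/2}$ contribute, for a uniform $R$, and these $x'$ lie in a fixed compact neighborhood $K'$ of $K$. Substitute $x'=x+\varepsilon^{\alpha/2}z$ and use $d\mu(x')=f_\mu(x')dx'$. Since the bracket is homogeneous of degree $\alpha$ in $\varepsilon$, the power of $\varepsilon$ in front of the inner integral is $\varepsilon^{-1/(p-1)+q\alpha+d\alpha/2}$. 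A direct check using $q=p/(p-1)$ gives $-\frac1{p-1}+\alpha\bigl(\frac{p}{p-1}+\frac d2\bigr)=\alpha$, so the prefactor is exactly $\varepsilon^{\alpha}$. The rescaled inner integral is therefore
\[
I_\varepsilon(x)=\int_{\|z\|\le R}\Bigl[c(x)-\tfrac{2}{p}\varepsilon^{-\alpha}\mathbb D\bigl(x,\nabla\varphi(x+\varepsilon^{\alpha/2}z)\bigr)\Bigr]_+^q f_\mu(x+\varepsilon^{\alpha/2}z)\,dz .
\]

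\textbf{Pointwise limit.} By \eqref{eq:bregman_is_norm} on $K'$, $2\varepsilon^{-\alpha}\mathbb D(\cdot)\to\|z\|^2_{A(x)}$ uniformly for $\|z\|\le R$, where $A(x)=\nabla^2\varphi(x)$. The main obstacle is that $f_\mu$ is not continuous, so $f_\mu(x+\varepsilon^{\alpha/2}z)$ need not converge pointwise. I handle this with the Lebesgue differentiation theorem. The bracket $[\cdot]_+^q$ converges uniformly on $\{\|z\|\le R\}$ to $g_x(z):=[c(x)-\frac1p\|z\|_{A(x)}^2]_+^q$, and $f_\mu\in L^1$. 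So $I_\varepsilon(x)$ differs by $o(1)$ from $\int g_x(z)f_\mu(x+\varepsilon^{\alpha/2}z)\,dz$. At every Lebesgue point $x$ of $f_\mu$, this converges to $f_\mu(x)\int g_x$, because $g_x$ is bounded and supported in $\ball(0,R)$ and averages of $|f_\mu-f_\mu(x)|$ over shrinking balls vanish. The integral $\int g_x$ is an explicit Barenblatt-type computation: change variables $w=A(x)^{1/2}z$ and pass to polar coordinates. The normalization of $\mathfrak B_{d,p}$ is designed for exactly this, giving $\int[c-\frac1p\|z\|^2_A]_+^{q-1}dz=1/f_\mu(x)$. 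The $q$-th power version yields, via $B(q+1,\tfrac d2)=B(q,\tfrac d2)\frac{q}{q+d/2}$, the value $c(x)\frac{2p}{d(p-1)+2p}\cdot\frac{1}{f_\mu(x)}$. Multiplying by $f_\mu(x)$ and inserting the Monge--Amp\`ere identity \eqref{eq:MA-density-identity} gives $c(x)=\mathfrak B_{d,p}^{-(p-1)\alpha}[f_\mu(x)f_\nu(\nabla\varphi(x))]^{-(p-1)\alpha/2}$. This matches the claimed integrand.

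\textbf{Integration over $E$.} Domination comes from $I_\varepsilon(x)\le c(x)^q\,\varepsilon^{-d\alpha/2}\mu(\ball(x,R\varepsilon^{\alpha/2}))$. This bound is controlled by the Hardy--Littlewood maximal function of $f_\mu$, which is not integrable in general. Uniform integrability against $d\mu$ restricted to $E$ is still fine, because $f_\mu\le\gamma^{-1}$ on $E$ makes $d\mu=f_\mu\,dx$ comparable to Lebesgue measure there, and $Mf_\mu$ lies in weak-$L^1$. To avoid that subtlety, I would instead use Fubini. Write $\int_E I_\varepsilon(x)\,d\mu(x)$ and bound $f_\mu(x)\le\gamma^{-1}$ on $E$. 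The $\mu(dx')$-integral then gives a total bound of order $\int_{K'}f_\mu\cdot\gamma^{-1}$, which yields $L^1$-convergence via Vitali or a density argument: approximate $f_\mu$ in $L^1(K')$ by continuous functions, for which the convergence is uniform. This density approximation is the step I would write most carefully.
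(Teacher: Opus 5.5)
Your proposal is correct and follows the paper's proof essentially step by step. The shared steps are localization via the lower Bregman bound, the blow-up $x'=x+\varepsilon^{1/(d(p-1)+2)}z$ with the same exponent bookkeeping, convergence at Lebesgue points of $f_\mu$, and the Barenblatt beta-function identity combined with the Monge--Amp\`ere identity. The passage to the integral over $E$ is also the same in spirit: approximate $f_\mu$, then apply a Fubini/translation estimate. The only difference is that the paper truncates to $f_\mu\wedge M$, while you approximate by continuous functions in $L^1(K')$. That is equally valid and even makes the Lebesgue-point step unnecessary.

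You were right to discard the maximal-function aside. Weak-$L^1$ gives neither integrability nor uniform integrability of $Mf_\mu$ on $E$. What is needed is exactly the estimate $\int_E\int_{\ball(0,R)}|f_\mu-g|(x+\varepsilon^{1/(d(p-1)+2)}z)\,dz\,dx\le|\ball(0,R)|\,\|f_\mu-g\|_{L^1(K')}$ that you switch to.
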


\begin{proof} 
Since \(x\in E\) and \(f_\mu,f_\nu\circ\nabla\varphi\) are bounded above and below on \(E\),
and since \(\nabla^2\varphi(x)\) is uniformly elliptic, there exist constants
$
0<c_\gamma<C_\gamma<\infty
$
such that
\begin{equation}
    \label{eq:bound-Ceps-p}
c_\gamma \varepsilon^{\frac{2}{d(p-1)+2}}\leq  C_{\varepsilon,p}(x)\leq C_\gamma \varepsilon^{\frac{2}{d(p-1)+2}},
\qquad x\in E.
\end{equation} 
We first record the localization of the Barenblatt profile. If
\[
\left[
C_{\varepsilon,p}(x)
-
\frac{2}{p}\mathbb{D}(x,\nabla\varphi(x'))
\right]_+>0,
\]
then $\frac{2}{p}\mathbb{D}(x,\nabla\varphi(x'))\leq C_{\varepsilon,p}(x).$ 
By \Cref{lemma:Bregman_ptwisebd},
\[
\mathbb{D}(x,\nabla\varphi(x'))\geq \frac{\sigma_m(\varphi)}{2}\|x-x'\|^2.
\]
Hence $\|x-x'\|
\leq
C \varepsilon^{\frac{1}{d(p-1)+2}},$ 
with \(C\) depending only on \(p,\gamma,\sigma_m(\varphi),\sigma_M(\varphi)\). Therefore there exists
\(R_\gamma>0\), independent of \(\varepsilon\), such that the integrand in $L'_{\eps,E}$ is supported in
\begin{equation}
    \label{localization-lower}
    \left\{ (x,x'): \| x-x'\|\leq R_\gamma \varepsilon^{\frac{1}{d(p-1)+2}} \right\}.
\end{equation}
For \(x\in E\), define
\[
J_\varepsilon(x)
:=
\varepsilon^{-\frac{2}{d(p-1)+2}}
\varepsilon^{-\frac1{p-1}}
\int_{\mathbb R^d}
\left[
C_{\varepsilon,p}(x)
-
\frac{2}{p}\mathbb{D}(x,\nabla\varphi(x'))
\right]_+^q
\,d\mu(x'),
\]
and note that
$$
\varepsilon^{-\frac{2}{d(p-1)+2}}
L_{\varepsilon,E}' = \int_E J_\varepsilon(x) d\mu(x).
$$
Changing variables $x'=x+\varepsilon^{\frac{1}{d(p-1)+2}}  z,$ 
we obtain
\[
J_\varepsilon(x)
=
\int_{\mathbb R^d}
H_\varepsilon(x,z) f_\mu(x+\varepsilon^{\frac{1}{d(p-1)+2}}  z)\,dz,
\]
where
\[
H_\varepsilon(x,z)
:=
 \left[
{\varepsilon^{-\frac{2}{d(p-1)+2}} }C_{\varepsilon,p}(x)-
{\varepsilon^{-\frac{2}{d(p-1)+2}} }\frac{2}{p}
{\mathbb{D}(x,\nabla\varphi(x+\varepsilon^{\frac{1}{d(p-1)+2}}  z))}
\right]_+^q.
\]
We have $H_\varepsilon(x,z)=0$ for  $\|z\|>R_\gamma$ by~\eqref{localization-lower}. 
Moreover, \eqref{eq:bound-Ceps-p} and \Cref{lemma:Bregman_ptwisebd} yield
\begin{equation}
    \label{eq:bound-Heps}
    0\leq H_\varepsilon(x,z)
\leq C_\gamma \mathbf 1_{\ball(0,R_\gamma)}(z).
\end{equation}
We now identify the pointwise limit. By \Cref{lemma:Bregman_ptwisebd}, applied on a compact neighborhood of
\(K\), we have uniformly for \(x\in E\) and \(\|z\|\leq R_\gamma\),
\[
\varepsilon^{-\frac{2}{d(p-1)+2}}\frac{2}{p}
{\mathbb{D}(x,\nabla\varphi(x+\varepsilon^{\frac{1}{d(p-1)+2}}  z))}
=
\frac1p \|z\|_{\nabla^2\varphi(x)}^2+o_\gamma(1).
\]
Therefore
\[
H_\varepsilon(x,z)
\to
H_0(x,z)
:=
\left[
\left[
\frac{\sqrt{\det \nabla^2\varphi(x)}}{\mathfrak B_{d,p}f_\mu(x)}
\right]^{\frac{2(p-1)}{d(p-1)+2}}
-
\frac1p\|z\|_{\nabla^2\varphi(x)}^2
\right]_+^q
\]
for \(\mathcal L^d\)-a.e.\ \(x\in E\) and every \(z\in\mathbb R^d\), and the convergence is dominated by
\(C_\gamma\mathbf 1_{\ball(0,R_\gamma)}\).

Let \(x\in E\) be a Lebesgue point of \(f_\mu\). By the Lebesgue differentiation theorem (see \cite[Theorem~3.21]{folland1999real}),
for every fixed \(R>0\),
\begin{equation}
    \label{lebesgue-diff-lower-p}
    \int_{\ball(0,R)}
|f_\mu(x+\varepsilon^{\frac{1}{d(p-1)+2}}  z)-f_\mu(x)|\,dz
\to 0.
\end{equation} 
 Hence
 \begin{multline*}
     \left|
J_\varepsilon(x)
-
f_\mu(x)\int_{\mathbb R^d}H_0(x,z)\,dz
\right|
\\
\leq
\int_{\mathbb R^d}
H_\varepsilon(x,z)
|f_\mu(x+\varepsilon^{\frac{1}{d(p-1)+2}}  z)-f_\mu(x)|\,dz
\\
+
f_\mu(x)
\int_{\mathbb R^d}
|H_\varepsilon(x,z)-H_0(x,z)|\,dz.
 \end{multline*}
The first term tends to \(0\) by \eqref{eq:bound-Heps} and \eqref{lebesgue-diff-lower-p}. The second term tends to \(0\)
by dominated convergence. Therefore, as $\varepsilon\downarrow 0$,
\[
J_\varepsilon(x)
\to 
J_0(x)
:=
f_\mu(x)
\int_{\mathbb R^d}
\left[
\left[
\frac{\sqrt{\det \nabla^2\varphi(x)}}{\mathfrak B_{d,p}f_\mu(x)}
\right]^{\frac{2(p-1)}{d(p-1)+2}}-\frac1p\|z\|_{\nabla^2\varphi(x)}^2
\right]_+^q
\,dz
\]
for \(\cL^d\)-a.e.~\(x\in E\).

We next upgrade this pointwise convergence to convergence after integration over \(E\).
It is enough to prove convergence in \(L^1(E)\), because
\(f_\mu\leq \gamma^{-1}\) on \(E\). We use a truncation argument. For \(M>0\), set $f_\mu^M:=f_\mu\wedge M$, and define 

\[
J_\varepsilon^M(x)
:=
\int_{\mathbb R^d}
H_\varepsilon(x,z) f_\mu^M(x+\varepsilon^{\frac{1}{d(p-1)+2}}  z)\,dz.
\]
The same argument as above gives $J_\varepsilon^M(x)
\to 
f_\mu^M(x)
\int_{\mathbb R^d}H_0(x,z)\,dz$, 
for \(\cL^d\)-a.e.~\(x\in E\). Moreover, $0\leq J_\varepsilon^M(x)
\leq
C_\gamma M |\ball(0,R_\gamma)|.$
Since \(E\) has finite Lebesgue measure, the dominated convergence theorem implies that, as $\varepsilon\downarrow 0$,
\[
J_\varepsilon^M
\to 
f_\mu^M
\int_{\mathbb R^d}H_0(\cdot,z)\,dz
\qquad
\text{in }L^1(E).
\]
It remains to remove the truncation. We have
\begin{align}
\int_E |J_\varepsilon(x)-J_\varepsilon^M(x)|\,dx
&\leq
C_\gamma
\int_E
\int_{\ball(0,R_\gamma)}
\bigl(f_\mu-f_\mu^M\bigr)(x+\varepsilon^{\frac{1}{d(p-1)+2}}  z)
\,dz\,dx
\\
&=
C_\gamma
\int_{\ball(0,R_\gamma)}
\int_{E+\varepsilon^{\frac{1}{d(p-1)+2}}  z}
\bigl(f_\mu-f_\mu^M\bigr)(y)
\,dy\,dz
\\
&\leq
C_\gamma |\mathbb{B}(0,R_\gamma)|
\int_{\mathbb R^d}
\bigl(f_\mu-f_\mu^M\bigr)(y)\,dy.
\end{align}
Similarly,
\[
\int_E
\left|
f_\mu(x)-f_\mu^M(x)
\right|
\int_{\mathbb R^d}H_0(x,z)\,dz\,dx
\leq
C_\gamma |\ball(0,R_\gamma)|
\int_{\mathbb R^d}
\bigl(f_\mu-f_\mu^M\bigr)(y)\,dy.
\]
Since \(f_\mu\in L^1(\mathbb R^d)\), the last quantity tends to \(0\) as \(M\to\infty\).
Therefore,  $J_\varepsilon
\to  J_0$ in $L^1(E)$. 
As \(f_\mu\leq\gamma^{-1}\) on \(E\), this also gives
\[
\int_E J_\varepsilon(x)\,d\mu(x)
\to 
\int_E J_0(x)\,d\mu(x),
\]
as $\varepsilon\downarrow 0$. 

We now compute the limit. By \Cref{lemma:integrals} with \(\varepsilon=1\) and
\(c=\left[
\frac{\sqrt{\det \nabla^2\varphi(x)}}{\mathfrak B_{d,p}f_\mu(x)}
\right]^{\frac{2(p-1)}{d(p-1)+2}}\),
\[
\int_{\mathbb R^d}
\left[
\left[
\frac{\sqrt{\det \nabla^2\varphi(x)}}{\mathfrak B_{d,p}f_\mu(x)}
\right]^{\frac{2(p-1)}{d(p-1)+2}}-\frac1p\|z\|_{\nabla^2\varphi(x)}^2
\right]_+^{\frac1{p-1}}
dz
=
\frac{1}{f_\mu(x)}.
\]
Moreover, using the beta-function identity ${
B\left(\frac d2,\frac{2p-1}{p-1}\right)
}
=
\frac{2p}{d(p-1)+2p}B\left(\frac d2,\frac{p}{p-1}\right)$, 
we get
\[
\int_{\mathbb R^d}
\left[
\left[
\frac{\sqrt{\det \nabla^2\varphi(x)}}{\mathfrak B_{d,p}f_\mu(x)}
\right]^{\frac{2(p-1)}{d(p-1)+2}}
\!\!\!\!-\frac1p\|z\|_{\nabla^2\varphi(x)}^2
\right]_+^q \!\!\!\!
dz
=
\frac{2p}{d(p-1)+2p}
\frac{\left[
\frac{\sqrt{\det \nabla^2\varphi(x)}}{\mathfrak B_{d,p}f_\mu(x)}
\right]^{\frac{2(p-1)}{d(p-1)+2}}}{f_\mu(x)}.
\]
Therefore
\[
J_0(x)
=
\frac{2p}{d(p-1)+2p} 
\left[
\frac{\sqrt{\det \nabla^2\varphi(x)}}{\mathfrak B_{d,p}f_\mu(x)}
\right]^{\frac{2(p-1)}{d(p-1)+2}}.
\] 
By \eqref{eq:MA-density-identity},
\[
\left[
\frac{\sqrt{\det \nabla^2\varphi(x)}}{\mathfrak B_{d,p}f_\mu(x)}
\right]^{\frac{2(p-1)}{d(p-1)+2}}
=
\mathfrak B_{d,p}^{-\frac{2(p-1)}{d(p-1)+2}}
\left[
f_\nu(\nabla\varphi(x))f_\mu(x)
\right]^{-\frac{p-1}{d(p-1)+2}},
\]
and hence
\[
J_0(x)
=
\frac{2p}{d(p-1)+2p}
\mathfrak B_{d,p}^{-\frac{2(p-1)}{d(p-1)+2}}
\left[
f_\nu(\nabla\varphi(x))f_\mu(x)
\right]^{-\frac{p-1}{d(p-1)+2}}.
\]
Consequently,
\[
\lim_{\varepsilon\downarrow0}
\varepsilon^{-\frac{2}{d(p-1)+2}}
L_{\varepsilon,E}'
=
\frac{2p}{d(p-1)+2p}\,
\mathfrak B_{d,p}^{-\frac{2(p-1)}{d(p-1)+2}}
\int_E
\left[
f_\nu(\nabla\varphi(x))f_\mu(x)
\right]^{-\frac{p-1}{d(p-1)+2}}
\,d\mu(x),
\]
completing the proof.
\end{proof}

\proofreadhere

We can now state and prove the lower bound. 

\begin{theorem}[Lower bound for $p$-ROT]
\label{thm:lower_bound}
Fix $p>1$ and assume \ref{assumption:hoelder_densities}, \ref{assumption:elliptic_phi}. Then
$$ \liminf_{\varepsilon \to 0^+}\frac{{\rm ROT}_{\eps,p}(\mu,\nu)-{\rm OT}(\mu,\nu)}{\varepsilon^{\frac{2}{d(p-1)+2}}}\geq \mathfrak C_{d,p}\cdot\mathcal{K}(\mu,\nu,p)\in[0,\infty].$$
\end{theorem}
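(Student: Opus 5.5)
The lower bound comes from weak duality (\cref{proposition:dual}), tested on a perturbation of the Kantorovich potentials that is localized to a good set $E$. Fix a compact $K$ and $\gamma\in(0,1)$, and let $E=E_{K,\gamma}\subset K$ be the set where $\gamma\le f_\mu\le\gamma^{-1}$ and $\gamma\le f_\nu\circ\nabla\varphi\le\gamma^{-1}$. In the scaled normalization, take
\[
f(x)=\frac{\|x\|^2-2\varphi(x)}{p}+C_{\varepsilon,p}(x)\mathbf 1_E(x),\qquad g(y)=\frac{\|y\|^2-2\varphi^*(y)}{p}.
\]
Here $C_{\varepsilon,p}$ is as in \cref{lem:lower-bound-lebesgue-diff}. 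Then $f\oplus g-\|x-y\|^2/p = C_{\varepsilon,p}(x)\mathbf 1_E(x)-\frac2p\mathbb D(x,y)$. The sets $\{f\ge\cdot\}$ etc.\ are integrable because $\mu,\nu\in\mathcal P_2$ and $C_{\varepsilon,p}$ is bounded on $E$.

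We evaluate the dual objective in \eqref{eq:lq_ot_dual}. By \cref{lemma:integrated-bregman-identity} (or directly from Kantorovich duality) we have $p(\int f\,d\mu+\int g\,d\nu)={\rm OT}(\mu,\nu)+p\int_E C_{\varepsilon,p}\,d\mu$. Since $\mathbb D\ge0$, the positive part vanishes off $E\times\R^d$. Changing variables $y=\nabla\varphi(x')$ with $\nu=(\nabla\varphi)_\#\mu$ turns the penalty into exactly $\frac pq L'_{\varepsilon,E}$. This gives
\[
\frac{{\rm ROT}_{\varepsilon,p}-{\rm OT}}{\varepsilon^{\frac{2}{d(p-1)+2}}}\ \ge\ p\int_E \varepsilon^{-\frac{2}{d(p-1)+2}}C_{\varepsilon,p}\,d\mu-(p-1)\,\varepsilon^{-\frac{2}{d(p-1)+2}}L'_{\varepsilon,E}.
\]
The first term is $\varepsilon$-independent and equals $p\,\mathfrak B_{d,p}^{-\frac{2(p-1)}{d(p-1)+2}}\int_E[f_\nu(\nabla\varphi)f_\mu]^{-\frac{p-1}{d(p-1)+2}}d\mu$, by the Monge--Amp\`ere identity \eqref{eq:MA-density-identity}. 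The second term converges by \cref{lem:lower-bound-lebesgue-diff}. Writing $D:=d(p-1)+2$, the resulting constant is
\[
p-\frac{2p(p-1)}{d(p-1)+2p}=\frac{p(d(p-1)+2)}{d(p-1)+2p},
\]
times $\mathfrak B_{d,p}^{-2(p-1)/D}$, which is precisely $\mathfrak C_{d,p}$. Hence the liminf is at least $\mathfrak C_{d,p}\int_E[f_\nu(\nabla\varphi)f_\mu]^{-\frac{p-1}{D}}d\mu$.

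Finally we exhaust: let $K=\overline{\ball(0,k)}$ and $\gamma=1/k$ with $k\to\infty$. The sets $E_k$ increase to $\{f_\mu>0,\ f_\nu\circ\nabla\varphi\in(0,\infty)\}$, which has full $\mu$-measure; indeed $f_\nu\circ\nabla\varphi>0$ $\mu$-a.e.\ since $\nabla\varphi$ pushes $\mu$ to $\nu$. By monotone convergence the integrals increase to $\mathcal K(\mu,\nu,p)$, and this covers the case $\mathcal K=\infty$ as well.

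The main point requiring care is the bookkeeping of the scaled duality normalization. We must confirm that the factor $p$ in front of the sup and the factor $1/(q\varepsilon^{1/(p-1)})$ produce exactly $(p-1)L'$, and that $C_{\varepsilon,p}$ defined without the $\mathbf 1_E$ cutoff matches the lemma. A secondary point is measurability and integrability of $f$, since $C_{\varepsilon,p}$ is bounded on $E$. The analytic difficulty is already contained in \cref{lem:lower-bound-lebesgue-diff}.
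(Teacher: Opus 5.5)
Your proposal is correct and follows essentially the same route as the paper. It uses the same localized dual test pair, the same reduction of the penalty term to $(p-1)L'_{\varepsilon,E}$ via \cref{lem:lower-bound-lebesgue-diff}, and the same constant bookkeeping yielding $\mathfrak C_{d,p}$. The only difference is in the exhaustion step. You use the single increasing sequence $E_k$ with monotone convergence, whereas the paper first lets $\gamma\downarrow0$ on a fixed compact $K\subset\operatorname{supp}\mu$, then applies inner regularity with separate cases $\mathcal K<\infty$ and $\mathcal K=\infty$. Your version handles both cases at once.
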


\begin{proof}%
Set \(q:=\frac{p}{p-1}\) and
\[
\Theta(x):=
\left[
f_\nu(\nabla\varphi(x))f_\mu(x)
\right]^{-\frac{p-1}{d(p-1)+2}}.
\]
We first prove that for every compact set \(K\subset\operatorname{supp}\mu\),
\begin{align}\label{eq:compact-lower-bound}
\liminf_{\varepsilon\downarrow0}F_{\varepsilon,p}
\ge
\mathfrak C_{d,p}\int_K\Theta(x)\,d\mu(x),
\end{align}
where
\[
F_{\varepsilon,p}
:=
\frac{
\operatorname{ROT}_{\varepsilon,p}(\mu,\nu)-\operatorname{OT}(\mu,\nu)
}{
\varepsilon^{\frac{2}{d(p-1)+2}}
}.
\]
Fix such a compact set \(K\).  For \(\gamma\in(0,1)\), define
\[
K_\gamma
:=
K\cap
\left\{x:
\gamma\leq f_\mu(x)\leq \gamma^{-1}
\ {\rm and}\ 
\gamma\leq (f_\nu\circ \nabla\varphi)(x)\leq \gamma^{-1}
\right\}.
\]
Since \(f_\mu\) and \(f_\nu\circ\nabla\varphi\) are finite and strictly positive \(\mu\)-a.e.\ on \(\operatorname{supp}\mu\), we have \(\mathbf 1_{K_\gamma}\uparrow \mathbf 1_K\) \(\mu\)-a.e.\ as \(\gamma\downarrow0\).  For \(x\in K_\gamma\), set
\[
C_{\varepsilon,p}(x)
:=
\varepsilon^{\frac{2}{d(p-1)+2}}
\left[
\frac{\sqrt{\det \nabla^2\varphi(x)}}{\mathfrak B_{d,p}f_\mu(x)}
\right]^{\frac{2(p-1)}{d(p-1)+2}}.
\]
Define
\[
\Gamma_{\varepsilon,p}(a,b):=
\int a\,d\mu+\int b\,d\nu
-\frac{1}{q\varepsilon^{\frac{1}{p-1}}}
\int
\left(
a\oplus b-\frac{\|x-y\|^2}{p}
\right)_+^q
d(\mu\otimes\nu).
\]
We choose the dual test functions
\[
a(x)
:=
\frac{\|x\|^2}{p}
-
\frac{2}{p}\varphi(x)
+
C_{\varepsilon,p}(x)\mathbf 1_{K_\gamma}(x),\qquad b(y)
:=
\frac{\|y\|^2}{p}
-
\frac{2}{p}\varphi^*(y),
\] 
which satisfy \(a\in L^1(\mu)\) and \(b\in L^1(\nu)\) due to \ref{assumption:hoelder_densities}, \ref{assumption:elliptic_phi}. By weak duality (\Cref{proposition:dual}),
\[
F_{\varepsilon,p}
\geq
p\,\varepsilon^{-\frac{2}{d(p-1)+2}}
\left[
\Gamma_{\varepsilon,p}(a,b)-\frac1p\operatorname{OT}(\mu,\nu)
\right].
\]
Using Kantorovich duality (e.g., \cite[Chapter~5]{Villani2008}) and the identity
\[
\frac{\|x\|^2}{p}
-\frac{2}{p}\varphi(x)
+
\frac{\|y\|^2}{p}
-\frac{2}{p}\varphi^*(y)
-
\frac{\|x-y\|^2}{p}
=
-\frac{2}{p}\mathbb{D}(x,y),
\]
we get
\begin{equation}
    \label{eq:decomposition-lower}
    \Gamma_{\varepsilon,p}(a,b)-\frac1p\operatorname{OT}(\mu,\nu)
=
\int_{K_\gamma} C_{\varepsilon,p}(x)\,d\mu(x)
-
\frac1q L_{\varepsilon,K_\gamma}',
\end{equation}
where
\[
L_{\varepsilon,K_\gamma}'
:=
\varepsilon^{-\frac1{p-1}}
\int_{K_\gamma}
\int_{\mathbb R^d}
\left[
C_{\varepsilon,p}(x)
-
\frac{2}{p}\mathbb{D}(x,\nabla\varphi(x'))
\right]_+^q
\,d\mu(x')\,d\mu(x).
\]

The integral in \eqref{eq:decomposition-lower} is explicit. Indeed, by
\eqref{eq:MA-density-identity} and the definition of \(\Theta\),
\[
\varepsilon^{-\frac{2}{d(p-1)+2}}C_{\varepsilon,p}(x)
=
\mathfrak B_{d,p}^{-\frac{2(p-1)}{d(p-1)+2}}
\Theta(x).
\]
Therefore
\[
\varepsilon^{-\frac{2}{d(p-1)+2}}
\int_{K_\gamma}C_{\varepsilon,p}(x)\,d\mu(x)
=
\mathfrak B_{d,p}^{-\frac{2(p-1)}{d(p-1)+2}}
\int_{K_\gamma}\Theta(x)\,d\mu(x).
\]

For the second term in \eqref{eq:decomposition-lower}, \Cref{lem:lower-bound-lebesgue-diff} gives
\[
\lim_{\varepsilon\downarrow0}
\varepsilon^{-\frac{2}{d(p-1)+2}}L_{\varepsilon,K_\gamma}'
=
\frac{2p}{d(p-1)+2p}\,
\mathfrak B_{d,p}^{-\frac{2(p-1)}{d(p-1)+2}}
\int_{K_\gamma}\Theta(x)\,d\mu(x).
\]
Consequently,
\[
\begin{aligned}
\liminf_{\varepsilon\downarrow0}F_{\varepsilon,p}
&\geq
p
\left[
1
-
\frac1q
\frac{2p}{d(p-1)+2p}
\right]
\mathfrak B_{d,p}^{-\frac{2(p-1)}{d(p-1)+2}}
\int_{K_\gamma}\Theta(x)\,d\mu(x)
\\
&=
\frac{p(d(p-1)+2)}{d(p-1)+2p}
\mathfrak B_{d,p}^{-\frac{2(p-1)}{d(p-1)+2}}
\int_{K_\gamma}\Theta(x)\,d\mu(x)
=
\mathfrak{C}_{d,p}
\int_{K_\gamma}\Theta(x)\,d\mu(x).
\end{aligned}
\]
Letting \(\gamma\downarrow0\) and using the monotone convergence theorem proves the compact-set lower bound.

\emph{Case \(\mathcal{K}(\mu,\nu,p)<\infty\).} Fix \(\delta>0\). Since \(\Theta\,d\mu\) is a finite Borel measure on \(\mathbb R^d\), inner regularity gives a compact set \(K\subset\operatorname{supp}\mu\) such that
\[
\int_{\R^d\setminus K}\Theta(x)\,d\mu(x)\leq \delta.
\]
The lower bound~\eqref{eq:compact-lower-bound} yields \(\liminf_{\varepsilon\downarrow0}F_{\varepsilon,p}\geq
\mathfrak{C}_{d,p}\bigl(\mathcal{K}(\mu,\nu,p)-\delta\bigr)\), and letting \(\delta\downarrow0\) then proves
\[
\liminf_{\varepsilon\downarrow0}
\frac{
\operatorname{ROT}_{\varepsilon,p}(\mu,\nu)-\operatorname{OT}(\mu,\nu)
}{
\varepsilon^{2/(d(p-1)+2)}
}
\geq
\mathfrak{C}_{d,p}\mathcal{K}(\mu,\nu,p).
\]

\emph{Case \(\mathcal{K}(\mu,\nu,p)=\infty\).} Fix \(\delta>0\). Applying inner regularity to
\((\Theta\wedge M)\,d\mu\) and choosing \(M\) sufficiently large, we may choose
a compact set \(K\subset\operatorname{supp}\mu\) such that
\[
\int_K\Theta(x)\,d\mu(x)\geq \delta^{-1}.
\]
The lower bound~\eqref{eq:compact-lower-bound} then yields \(\liminf_{\varepsilon\downarrow0}F_{\varepsilon,p}
\geq
\mathfrak{C}_{d,p}\delta^{-1}\). Thus \(\liminf_{\varepsilon\downarrow0}F_{\varepsilon,p}=+\infty\) by letting \(\delta\downarrow0\), completing the proof.
\end{proof}

\subsection{Proof of the lower bound for EOT}

It remains to prove the analogous lower bound for EOT, which is simpler than for $p$-ROT.

\begin{theorem}[Lower bound for EOT]\label{thm:eot-lower-bound}
Let \ref{assumption:hoelder_densities}, \ref{assumption:elliptic_phi} hold.  Then
\[
\liminf_{\varepsilon\downarrow0}
\frac{
{\rm EOT}_\varepsilon(\mu,\nu)
-
{\rm OT}(\mu,\nu)
+
\frac d2\,\varepsilon\log(\pi\varepsilon)
}{\varepsilon}
\geq
\mathcal K_{\mathrm{EOT}}(\mu,\nu) \in [-\infty,\infty).
\]
\end{theorem}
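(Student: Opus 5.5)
The plan is to mirror the proof of \cref{thm:lower_bound}, using the EOT part of \cref{proposition:dual} with a Gaussian perturbation of the Kantorovich potentials. If $\mathcal K_{\rm EOT}(\mu,\nu)=-\infty$ there is nothing to prove, so assume it is finite. We will show that for every compact $K\subset\operatorname{supp}\mu$ and $\gamma\in(0,1)$, with $K_\gamma$ defined as in the proof of \cref{thm:lower_bound},
\[
\liminf_{\varepsilon\downarrow0}\frac{{\rm EOT}_\varepsilon(\mu,\nu)-{\rm OT}(\mu,\nu)+\frac d2\varepsilon\log(\pi\varepsilon)}{\varepsilon}\ \geq\ -\frac12\int_{K_\gamma}\log\bigl[f_\mu f_\nu\circ\nabla\varphi\bigr]\,d\mu \;-\;\frac d2\log(\pi e)\,\mu(\R^d\setminus K_\gamma)\;+\;(\text{correction}),
\]
and then remove the localization.

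The subtlety is that outside $K_\gamma$ the naive choice (no perturbation) is not adequate. With $a=\|x\|^2-2\varphi$, the exponential penalty $\varepsilon\int e^{-2\mathbb D/\varepsilon-1}\,d\mu\,d\mu$ is of order $\varepsilon\cdot\varepsilon^{d/2}f_\mu$, which is negligible. However, the normalization term $\frac d2\varepsilon\log(\pi\varepsilon)$ must be produced everywhere. I will therefore perturb on all of $\R^d$:
\[
a(x)=\|x\|^2-2\varphi(x)+\varepsilon\Lambda_\varepsilon(x),\qquad b=\|y\|^2-2\varphi^*(y),
\]
\[
\Lambda_\varepsilon(x)=-\tfrac d2\log(\pi\varepsilon)+\tfrac12\log\det\nabla^2\varphi(x)-\log f_\mu(x)\quad\text{on }K_\gamma,
\]
which is the Gaussian profile normalizing the local mass to $1$. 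Off $K_\gamma$, I take instead $\Lambda_\varepsilon=-\frac d2\log(\pi\varepsilon)+\log c_0$ with a constant $c_0$. The dual value minus ${\rm OT}$ then equals
\[
\varepsilon\int\Lambda_\varepsilon\,d\mu-\varepsilon\int e^{\Lambda_\varepsilon(x)-1}\int e^{-2\mathbb D(x,\nabla\varphi(x'))/\varepsilon}\,d\mu(x')\,d\mu(x).
\]
On $K_\gamma$, exactly as in \cref{lem:lower-bound-lebesgue-diff} (rescale $x'=x+\sqrt\varepsilon z$, apply \cref{lemma:Bregman_ptwisebd}, use Lebesgue points of $f_\mu$, and truncate $f_\mu\wedge M$), the inner integral times $e^{\Lambda_\varepsilon}$ converges to $1$ in $L^1(K_\gamma,\mu)$. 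The only new point is domination: the Gaussian tail is controlled by the lower bound $\mathbb D\ge\frac{\sigma_m}{2}\|x-x'\|^2$ rather than by compact support, and this bound is integrable.

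Off $K_\gamma$, the inner integral is at most $\int e^{-\sigma_m\|x-x'\|^2/\varepsilon}f_\mu(x')\,dx'=(\pi\varepsilon/\sigma_m)^{d/2}(f_\mu*G_\varepsilon)(x)$. Integrating against $d\mu$ gives a contribution of size at most $c_0e^{-1}\sigma_m^{-d/2}\int f_\mu\,(f_\mu*G_\varepsilon)$, which is not obviously bounded when $f_\mu\notin L^2$. This is the main obstacle. To avoid it, I would instead set $\Lambda_\varepsilon=-\infty$ effectively off $K_\gamma$, i.e.\ take $a=-\infty$-like but still integrable. Concretely, off $K_\gamma$ set $\Lambda_\varepsilon=-\frac d2\log(\pi\varepsilon)-L$ with $L$ large. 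Its penalty is then bounded by $e^{-L}$ times the same $\int f_\mu\,(f_\mu*G_\varepsilon)$, and its linear contribution is $-L\,\mu(K_\gamma^c)$. The unbounded-density problem can be handled by choosing the complement set cleverly: I enlarge $K_\gamma$ to $\{f_\mu\le\gamma^{-1}\}$-type sets, so that $f_\mu$ is bounded where $\mu(K_\gamma^c)$ matters. On $\{f_\mu>\gamma^{-1}\}$ I instead use the true local profile $-\log f_\mu$, whose normalization stays near $1$ via the same Lebesgue-point argument, since no lower density bound is needed for upper-bounded mass. This yields a lower bound of
\[
\int_{K_\gamma}\bigl(\tfrac12\log\det\nabla^2\varphi-\log f_\mu\bigr)\,d\mu-o(1)
\]
plus complement terms that vanish as $\gamma\downarrow0$ and $L\to\infty$ suitably.

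Finally, by \eqref{eq:MA-density-identity}, $\frac12\log\det\nabla^2\varphi-\log f_\mu=-\frac12\log[f_\mu\,f_\nu\circ\nabla\varphi]$ on each region. Letting $\gamma\downarrow0$, using $\mathbf 1_{K_\gamma}\uparrow1$ $\mu$-a.e., and exhausting over compacts $K$ gives $\mathcal K_{\rm EOT}$. The integrand is integrable when $\mathcal K_{\rm EOT}$ is finite: its negative part is integrable because ${\rm Ent}>-\infty$ from the second-moment bound, so monotone or dominated convergence applies.
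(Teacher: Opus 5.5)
Your strategy (weak duality with the first Kantorovich potential shifted by the Gaussian normalization) is the paper's. You also correctly see why the $p$-ROT localization does not transfer: the term $\frac d2\varepsilon\log(\pi\varepsilon)$ must be produced on all of the mass. However, your ansatz has a concrete error: $\Lambda_\varepsilon$ is missing the additive constant $1$. The dual penalty carries $e^{\Lambda_\varepsilon-1}$, and you normalize so that $e^{\Lambda_\varepsilon(x)}\int e^{-2\mathbb D(x,\nabla\varphi(x'))/\varepsilon}\,d\mu(x')\to1$. Hence the penalty over $K_\gamma$ is $\varepsilon e^{-1}\mu(K_\gamma)+o(\varepsilon)$, not $o(\varepsilon)$. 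Your bound is therefore $\int_{K_\gamma}(\frac12\log\det\nabla^2\varphi-\log f_\mu)\,d\mu-e^{-1}\mu(K_\gamma)$, and in the limit it gives only $\mathcal K_{\rm EOT}(\mu,\nu)-e^{-1}$. Shifting $\Lambda_\varepsilon$ by a constant $t$ changes the limit by $t-e^{t-1}$ per unit mass. This is maximized, and vanishes, at $t=1$, which is the ``$1+$'' in the paper's $C_\varepsilon$.

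The obstacle $\int f_\mu\,(f_\mu*G_\varepsilon)$ is self-inflicted: it appears only because you use an $f_\mu$-independent profile off $K_\gamma$. The missing idea is to use the true profile on all of $\R^d$, namely $C_\varepsilon(x)=\varepsilon[1+\frac12\log\det\nabla^2\varphi(x)-\log f_\mu(x)-\frac d2\log(\pi\varepsilon)]$. This is admissible because $\log f_\mu\in L^1(\mu)$ once $\mathcal K_{\rm EOT}>-\infty$. Then $e^{C_\varepsilon(x)/\varepsilon-1}f_\mu(x)=\sqrt{\det\nabla^2\varphi(x)}\,(\pi\varepsilon)^{-d/2}$, so the outer density cancels. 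By Fubini the penalty is at most $\int S_\varepsilon\,d\mu$, where $S_\varepsilon(x')=(\pi\varepsilon)^{-d/2}\int\sqrt{\det\nabla^2\varphi(x)}\,e^{-2\mathbb D(x,\nabla\varphi(x'))/\varepsilon}\,dx$. Integrating in $x$ first gives $S_\varepsilon\le(\sigma_M(\varphi)/\sigma_m(\varphi))^{d/2}$ and $S_\varepsilon\to1$ pointwise (Taylor at $x'$). Dominated convergence against $f_\mu\in L^1$ then finishes the proof, with no compact sets, no $K_\gamma$, no Lebesgue points and no density bounds. This is the paper's proof.

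Your three-region patch can be repaired after the $+1$ fix, but not for the reasons you give:
\begin{itemize}
\item On $\{f_\mu>\gamma^{-1}\}$, the argument of \cref{lem:lower-bound-lebesgue-diff} uses $f_\mu\le\gamma^{-1}$ to pass from $L^1(dx)$ to $L^1(\mu)$, so it does not apply there. What actually rescues you is the same cancellation of $f_\mu(x)$, together with $|\{f_\mu>\gamma^{-1}\}|\le\gamma$.
\item On the leftover region $U$, your penalty bound is of order $e^{-L}\gamma^{-1}$, which forces $L\gtrsim\log(1/\gamma)$. You then still owe a proof that $L\,\mu(U)\to0$, e.g.\ via $\mu(\{f_\mu<\gamma\})\log(1/\gamma)\le\int_{\{f_\mu<\gamma\}}|\log f_\mu|\,d\mu$ and a $\gamma$-dependent choice of $K$.
\end{itemize}
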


\begin{proof}
If \(\mathcal K_{\rm EOT}(\mu,\nu)=-\infty\), there is nothing to prove. Assume \(\mathcal K_{\rm EOT}(\mu,\nu)>-\infty\) and note that this implies ${\rm Ent}(\mu)<\infty$ and ${\rm Ent}(\nu)<\infty$. Set \(T:=\nabla\varphi\) and \(A(x):=\nabla^2\varphi(x)\).  Define 
\[
C_\varepsilon(x)
:=
\varepsilon
\left[
1+\frac12\log\det A(x)-\log f_\mu(x)
-\frac d2\log(\pi\varepsilon)
\right].
\]
We use the entropic weak duality from \Cref{proposition:dual} with
\[
a_\varepsilon(x):=\|x\|^2-2\varphi(x)+C_\varepsilon(x),
\qquad
b(y):=\|y\|^2-2\varphi^*(y).
\]
Then \(a_\varepsilon\in L^1(\mu)\) and \(b\in L^1(\nu)\) as \(\mu,\nu\in\mathcal P_2(\mathbb R^d)\), Assumption~\ref{assumption:elliptic_phi} gives quadratic growth of \(\varphi\) and \(\varphi^*\), and \(\log f_\mu\in L^1(\mu)\).

Recall
\[
\mathbb D(x,y):=\varphi(x)+\varphi^*(y)-\langle x,y\rangle .
\]
Then
\[
a_\varepsilon(x)+b(y)-\|x-y\|^2
=
C_\varepsilon(x)-2\mathbb D(x,y),
\]
and Kantorovich duality gives
\[
\int\bigl(\|x\|^2-2\varphi(x)\bigr)\,d\mu(x)
+
\int\bigl(\|y\|^2-2\varphi^*(y)\bigr)\,d\nu(y)
=
{\rm OT}(\mu,\nu).
\]
Hence
\begin{equation}
\label{eq:development-EOT-lower}
{\rm EOT}_\varepsilon(\mu,\nu)-{\rm OT}(\mu,\nu)
\geq
\int C_\varepsilon\,d\mu-\varepsilon L_\varepsilon,
\end{equation}
where
\[
L_\varepsilon
:=
e^{-1}
\int_{\mathbb R^d}\int_{\mathbb R^d}
\exp\left(
\frac{C_\varepsilon(x)-2\mathbb D(x,y)}{\varepsilon}
\right)
\,d\nu(y)d\mu(x).
\]

We next prove that
\[
\limsup_{\varepsilon\downarrow0}L_\varepsilon\le1.
\]
Since \(T_\#\mu=\nu\), changing variables \(y=T(x')\) gives
\[
L_\varepsilon
=
e^{-1}
\int_{\mathbb R^d}\int_{\mathbb R^d}
\exp\left(
\frac{C_\varepsilon(x)-2\mathbb D(x,T(x'))}{\varepsilon}
\right)
\,d\mu(x')d\mu(x).
\]
By the definition of \(C_\varepsilon\),
\[
e^{-1}\exp\left(\frac{C_\varepsilon(x)}{\varepsilon}\right)f_\mu(x)
=
\frac{\sqrt{\det A(x)}}{(\pi\varepsilon)^{d/2}}
\qquad
\text{for }\cL^d\text{-a.e.\ }x\text{ with }f_\mu(x)>0.
\]
Consequently,
\[
L_\varepsilon
\le
\widetilde L_\varepsilon,
\]
where
\[
\widetilde L_\varepsilon
:=
\frac1{(\pi\varepsilon)^{d/2}}
\int_{\mathbb R^d}\int_{\mathbb R^d}
\sqrt{\det A(x)}
\exp\left(
-\frac{2\mathbb D(x,T(x'))}{\varepsilon}
\right)
f_\mu(x')\,dx'\,dx .
\]
It remains to show that \(\widetilde L_\varepsilon\to1\).

By Fubini,
\[
\widetilde L_\varepsilon
=
\int_{\mathbb R^d}S_\varepsilon(x')f_\mu(x')\,dx',
\]
where
\[
S_\varepsilon(x')
:=
\frac1{(\pi\varepsilon)^{d/2}}
\int_{\mathbb R^d}
\sqrt{\det A(x)}
\exp\left(
-\frac{2\mathbb D(x,T(x'))}{\varepsilon}
\right)
\,dx.
\]
Uniform ellipticity gives
\[
\mathbb D(x,T(x'))
=
\varphi(x)-\varphi(x')-\langle\nabla\varphi(x'),x-x'\rangle
\geq
\frac{\sigma_m(\varphi)}2\|x-x'\|^2,
\]
and \(\sqrt{\det A(x)}\le \sigma_M(\varphi)^{d/2}\). Hence
\[
0\le S_\varepsilon(x')
\le
\frac{\sigma_M(\varphi)^{d/2}}{(\pi\varepsilon)^{d/2}}
\int_{\mathbb R^d}
\exp\left(
-\frac{\sigma_m(\varphi)\|x-x'\|^2}{\varepsilon}
\right)
\,dx
=
\left(\frac{\sigma_M(\varphi)}{\sigma_m(\varphi)}\right)^{d/2}.
\]
For fixed \(x'\), changing variables \(x=x'+\sqrt\varepsilon z\) yields
\[
S_\varepsilon(x')
=
\frac1{\pi^{d/2}}
\int_{\mathbb R^d}
\sqrt{\det A(x'+\sqrt\varepsilon z)}
\exp\left(
-\frac{2\mathbb D(x'+\sqrt\varepsilon z,T(x'))}{\varepsilon}
\right)
\,dz.
\]
Since \(\varphi\in C^2\),
\[
\frac{2\mathbb D(x'+\sqrt\varepsilon z,T(x'))}{\varepsilon}
\longrightarrow
\|z\|_{A(x')}^2
\qquad
\text{for every fixed }z\in\mathbb R^d.
\]
The preceding ellipticity bound gives the integrable upper bound
\[
\sqrt{\det A(x'+\sqrt\varepsilon z)}
\exp\left(
-\frac{2\mathbb D(x'+\sqrt\varepsilon z,T(x'))}{\varepsilon}
\right)
\le
\sigma_M(\varphi)^{d/2}e^{-\sigma_m(\varphi)\|z\|^2}.
\]
Thus, by dominated convergence,
\[
S_\varepsilon(x')
\to
\frac{\sqrt{\det A(x')}}{\pi^{d/2}}
\int_{\mathbb R^d}e^{-\|z\|_{A(x')}^2}\,dz
=
1.
\]
Using the uniform bound on \(S_\varepsilon\) and \(f_\mu\in L^1(\mathbb R^d)\), another application of dominated convergence gives \(\widetilde L_\varepsilon\to1\). Therefore
\[
\limsup_{\varepsilon\downarrow0}L_\varepsilon\le1.
\]

Finally,
\[
\int \frac{C_\varepsilon(x)}{\varepsilon}\,d\mu(x)
=
1+
\int
\left[
\frac12\log\det A(x)-\log f_\mu(x)
\right]
d\mu(x)
-\frac d2\log(\pi\varepsilon).
\]
Combining this identity with \eqref{eq:development-EOT-lower} gives
\[
\begin{aligned}
\frac{
{\rm EOT}_\varepsilon(\mu,\nu)
-
{\rm OT}(\mu,\nu)
+
\frac d2\,\varepsilon\log(\pi\varepsilon)
}{\varepsilon}
&\geq
1+
\int
\left[
\frac12\log\det A(x)-\log f_\mu(x)
\right]
d\mu(x)
-
L_\varepsilon .
\end{aligned}
\]
Taking $\liminf$ and using \(\limsup_{\varepsilon\downarrow0}L_\varepsilon\le1\), we obtain
\[
\liminf_{\varepsilon\downarrow0}
\frac{
{\rm EOT}_\varepsilon(\mu,\nu)
-
{\rm OT}(\mu,\nu)
+
\frac d2\,\varepsilon\log(\pi\varepsilon)
}{\varepsilon}
\geq
\int
\left[
\frac12\log\det A(x)-\log f_\mu(x)
\right]
d\mu(x).
\]
It remains to see that the right-hand side equals
\(\mathcal K_{\mathrm{EOT}}(\mu,\nu)\). By \eqref{eq:MA-density-identity},
\[
\frac12\log\det A(x)-\log f_\mu(x)
=
-\frac12\log f_\mu(x)-\frac12\log f_\nu(T(x)),
\]
and now the claim follows from \(T_\#\mu=\nu\).
\end{proof}

\section{Upper bound}\label{Section:main-upper}

In this section, we prove the upper bounds for $p$-ROT and EOT, and hence complete the proofs of the main results. Note that for $p$-ROT, the upper bound is trivial if \(\mathcal K(\mu,\nu,p)=\infty\), so that we may assume \(\mathcal K(\mu,\nu,p)<\infty\).

We proceed through the primal problem: for every coupling $\pi\in\Pi(\mu,\nu)$, the primal $p$-ROT objective satisfies
$$
H_{\varepsilon,p}[\pi]:=\int \|x-y\|^2 d\pi +\varepsilon\left\|\frac{d\pi}{d(\mu\otimes \nu)}\right\|_{L^p(\mu\otimes \nu)}^p \geq {\rm ROT}_{\eps,p}(\mu,\nu). 
$$
Hence we aim to construct a family $(\pi_\varepsilon)_{\varepsilon >0}$ of couplings so that $H_{\varepsilon,p}[\pi_\varepsilon]$ yields the claimed upper bound for $\eps\to0$. Similarly for EOT.

For finite positive measures $\lambda,\bar\lambda$ with the same total mass, we write
\[
\Pi(\lambda,\bar\lambda)
:=
\left\{
\pi\in\mathcal M_+(\R^d\times\R^d):
(\mathrm{pr}_1)_\#\pi=\lambda,\ 
(\mathrm{pr}_2)_\#\pi=\bar\lambda
\right\},
\]
where $\mathrm{pr}_1((x,y))=x$ and  $\mathrm{pr}_2((x,y))=y$. 
We also set
\[
{\rm ROT}_{\varepsilon,p}(\lambda,\bar\lambda)
:=
\inf_{\pi\in\Pi(\lambda,\bar\lambda)}
\left\{
\int \|x-y\|^2\,d\pi
+
\varepsilon
\int
\left(
\frac{d\pi}{d(\lambda\otimes\bar\lambda)}
\right)^p
d\lambda d\bar\lambda
\right\},
\]
and
\[
{\rm EOT}_{\eps}(\lambda,\bar\lambda)
:=
\inf_{\pi\in\Pi(\lambda,\bar\lambda)}
\left\{
\int \|x-y\|^2\,d\pi(x,y)
+
\eps\,{\rm KL}(\pi\mid \lambda\otimes\bar\lambda)
\right\},
\]
both with the convention that the second term is $+\infty$ if
$\pi\not\ll \lambda\otimes\bar\lambda$.

\subsection{Auxiliary lemmas}
In this section, we prepare several auxiliary results for the proof of the upper bound.

\subsubsection{Local profiles}
We start with two calculations regarding the local profiles; they are elementary but important for the main results.

\begin{lemma}[Barenblatt profile]\label{lemma:integrals}
Let $A$ be a positive 
definite $d\times d$ matrix, $c,\eps>0$, and
$p>1$. Then
\begin{equation}
\label{eq:barenblatt}
\frac{1}{\varepsilon^{\frac{1}{p-1}}}
\int_{\mathbb R^d}
\left(c-\frac{1}{p}\|x\|_A^2\right)_+^{\frac{1}{p-1}}
dx
=
\frac{(\pi p)^{d/2}}{\sqrt{\det A}}\,
\frac{\Gamma\left(\frac{p}{p-1}\right)}
{\Gamma\left(\frac d2+\frac{p}{p-1}\right)}
\,
\frac{
c^{\frac d2+\frac{1}{p-1}}
}{
\varepsilon^{\frac{1}{p-1}}
}
\end{equation}
and
\begin{equation}
\label{eq:barenblatt-p}
\frac{1}{\varepsilon^{\frac{1}{p-1}}}
\int_{\mathbb R^d}
\left(c-\frac{1}{p}\|x\|_A^2\right)_+^{\frac{p}{p-1}}
dx
=
\frac{(\pi p)^{d/2}}{\sqrt{\det A}}\,
\frac{\Gamma\left(\frac{2p-1}{p-1}\right)}
{\Gamma\left(\frac d2+\frac{2p-1}{p-1}\right)}
\,
\frac{
c^{\frac d2+\frac{p}{p-1}}
}{
\varepsilon^{\frac{1}{p-1}}
}.
\end{equation}
As a consequence, for every $\gamma >0$, the constant
\[
c_{\varepsilon,\gamma,A}
:=
\left[
\gamma\,
\frac{\sqrt{\det A}}{\mathfrak B_{d,p}}
\right]^{
\frac{2(p-1)}{d(p-1)+2}
}
\varepsilon^{
\frac{2}{d(p-1)+2}
}
\]
solves
\[
\frac{1}{\varepsilon^{\frac{1}{p-1}}}
\int_{\mathbb R^d}
\left(c_{\varepsilon,\gamma,A}-\frac{1}{p}\|x\|_A^2\right)_+^{\frac{1}{p-1}}
dx
=
\gamma .
\]
\end{lemma}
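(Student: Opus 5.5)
The plan is to reduce both integrals to a one-dimensional radial integral by two linear changes of variables, and then recognize a Beta function. The final claim then follows by substituting $c=c_{\varepsilon,\gamma,A}$ into \eqref{eq:barenblatt} and matching constants with the definition of $\mathfrak B_{d,p}$ in \eqref{eq:def-B-C-constants}.

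\textbf{Reduction to a radial integral.} For $\alpha\in\{\frac1{p-1},\frac p{p-1}\}$, I first substitute $x=\sqrt{p}\,A^{-1/2}u$. This has Jacobian $p^{d/2}(\det A)^{-1/2}$ and turns $\frac1p\|x\|_A^2$ into $\|u\|^2$. Next I substitute $u=\sqrt c\,w$, which contributes $c^{d/2}$ from the Jacobian and $c^{\alpha}$ from the integrand. This gives
\[
\int_{\mathbb R^d}\Bigl(c-\tfrac1p\|x\|_A^2\Bigr)_+^{\alpha}dx=\frac{p^{d/2}}{\sqrt{\det A}}\,c^{\frac d2+\alpha}\int_{\mathbb R^d}(1-\|w\|^2)_+^{\alpha}\,dw .
\]
In polar coordinates the last integral equals $\omega_{d-1}\int_0^1(1-r^2)^\alpha r^{d-1}dr$. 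The further substitution $s=r^2$ turns this into $\frac{\omega_{d-1}}2B(\frac d2,\alpha+1)$.

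\textbf{Gamma-function form.} Writing $\omega_{d-1}=2\pi^{d/2}/\Gamma(d/2)$ and $B(\frac d2,\alpha+1)=\Gamma(\frac d2)\Gamma(\alpha+1)/\Gamma(\frac d2+\alpha+1)$, the last integral becomes $\pi^{d/2}\Gamma(\alpha+1)/\Gamma(\frac d2+\alpha+1)$. Substituting $\alpha+1=\frac p{p-1}$ gives \eqref{eq:barenblatt}, and $\alpha+1=\frac{2p-1}{p-1}$ gives \eqref{eq:barenblatt-p}, after dividing both sides by $\varepsilon^{1/(p-1)}$.

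\textbf{The normalizing constant.} In the $\omega_{d-1}$ form, \eqref{eq:barenblatt} reads $\varepsilon^{-\frac1{p-1}}\frac{\mathfrak B_{d,p}}{\sqrt{\det A}}\,c^{\frac{d(p-1)+2}{2(p-1)}}$, because $\frac{p^{d/2}}2\omega_{d-1}B(\frac p{p-1},\frac d2)=\mathfrak B_{d,p}$ by symmetry of the Beta function. Setting this equal to $\gamma$ and solving gives
\[
c=\Bigl[\gamma\tfrac{\sqrt{\det A}}{\mathfrak B_{d,p}}\varepsilon^{\frac1{p-1}}\Bigr]^{\frac{2(p-1)}{d(p-1)+2}} .
\]
The power of $\varepsilon$ here is $\frac{2}{d(p-1)+2}$, which matches $c_{\varepsilon,\gamma,A}$.

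There is no real obstacle in this argument. The only point needing care is the bookkeeping of exponents: $\frac d2+\frac1{p-1}=\frac{d(p-1)+2}{2(p-1)}$, and the identification of the Beta and Gamma forms through the formula for $\omega_{d-1}$.
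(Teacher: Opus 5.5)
Your proposal is correct and follows essentially the same route as the paper: a linear change of variables, polar coordinates, a Beta-function evaluation for a general exponent $\alpha$, conversion to Gamma form via $\omega_{d-1}=2\pi^{d/2}/\Gamma(d/2)$, and then solving for $c$ against $\mathfrak B_{d,p}$. The only cosmetic difference is that you rescale to the unit ball before passing to polar coordinates, whereas the paper substitutes $u=r^2/(pc)$ directly in the radial integral.
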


\begin{proof}
For \(\alpha>-1\), set
\[
I_\alpha(c,A):=
\int_{\mathbb R^d}
\left(c-\frac1p\|x\|_A^2\right)_+^\alpha dx .
\]
With the change of variables \(z=A^{1/2}x\), followed by polar coordinates and
\(u=r^2/(pc)\),
\[
\begin{aligned}
I_\alpha(c,A)
&=
\frac{\omega_{d-1}}{\sqrt{\det A}}
\int_0^{\sqrt{pc}}
\left(c-\frac{r^2}{p}\right)^\alpha r^{d-1}\,dr  \\
&=
\frac{\omega_{d-1}}{2\sqrt{\det A}}\,
p^{d/2}c^{d/2+\alpha}
B\left(\frac d2,\alpha+1\right)  
=
\frac{(\pi p)^{d/2}}{\sqrt{\det A}}\,
\frac{\Gamma(\alpha+1)}
{\Gamma\left(\frac d2+\alpha+1\right)}
c^{d/2+\alpha}.
\end{aligned}
\]
Here we used
\(\omega_{d-1}=2\pi^{d/2}/\Gamma(d/2)\).
Taking \(\alpha=1/(p-1)\) and \(\alpha=p/(p-1)\), and then multiplying by
\(\varepsilon^{-1/(p-1)}\), gives
\eqref{eq:barenblatt} and \eqref{eq:barenblatt-p}.
It remains to identify $c_{\varepsilon,\gamma,A}$. Since
\[
\mathfrak B_{d,p}
=
\frac{(\pi p)^{d/2}\Gamma\left(\frac p{p-1}\right)}
{\Gamma\left(\frac d2+\frac p{p-1}\right)},
\]
the first identity gives
\[
\frac{1}{\varepsilon^{1/(p-1)}}I_{1/(p-1)}(c,A)
=
\frac{\mathfrak B_{d,p}}{\sqrt{\det A}}\,
\frac{c^{\,d/2+1/(p-1)}}{\varepsilon^{1/(p-1)}}.
\]
Solving this equation with right-hand side \(\gamma\), and using
\(
\frac d2+\frac1{p-1}
=
\frac{d(p-1)+2}{2(p-1)},
\)
yields the stated value of \(c_{\varepsilon,\gamma,A}\).
\end{proof}

\begin{lemma}[Gaussian profile]\label{lemma:eot-gaussian-profile}
Let \(A\) be  positive definite, \(a>0\), and define
\[
G_{\varepsilon,a,A}(z)
:=
\frac{\sqrt{\det A}}{a(\pi\varepsilon)^{d/2}}
\exp\left(-\frac{\|z\|_A^2}{\varepsilon}\right).
\]
Then
\[
\int_{\mathbb R^d}G_{\varepsilon,a,A}(z)\,dz
=
\frac1a, \qquad
\int_{\mathbb R^d}\|z\|_A^2G_{\varepsilon,a,A}(z)\,dz
=
\frac{d\varepsilon}{2a}
\]
and
\[
\int_{\mathbb R^d}
G_{\varepsilon,a,A}(z)\log G_{\varepsilon,a,A}(z)\,dz
=
\frac1a
\left[
\frac12\log\det A
-
\log a
-
\frac d2\log(\pi\varepsilon)
-
\frac d2
\right].
\]
Consequently,
\[
\int_{\mathbb R^d}
\left[
\|z\|_A^2G_{\varepsilon,a,A}(z)
+
\varepsilon G_{\varepsilon,a,A}(z)\log G_{\varepsilon,a,A}(z)
\right]dz
=
\frac{\varepsilon}{a}
\left[
\frac12\log\det A
-
\log a
-
\frac d2\log(\pi\varepsilon)
\right].
\]
\end{lemma}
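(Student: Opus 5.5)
The plan is to reduce everything to standard Gaussian integrals via the linear change of variables $w=A^{1/2}z/\sqrt{\varepsilon}$. Its Jacobian is $dz=\varepsilon^{d/2}(\det A)^{-1/2}\,dw$, and it turns $\|z\|_A^2/\varepsilon$ into $\|w\|^2$. Under this substitution the factor $\sqrt{\det A}/(\pi\varepsilon)^{d/2}$ in $G_{\varepsilon,a,A}$ cancels against the Jacobian. Hence $\int G_{\varepsilon,a,A}\,dz=\frac1a\pi^{-d/2}\int e^{-\|w\|^2}dw=\frac1a$, which is the first identity.

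For the second moment, the same substitution gives
\[
\int\|z\|_A^2G_{\varepsilon,a,A}(z)\,dz=\frac{\varepsilon}{a}\,\pi^{-d/2}\int\|w\|^2e^{-\|w\|^2}\,dw .
\]
The remaining Gaussian moment is $\pi^{-d/2}\int\|w\|^2e^{-\|w\|^2}dw=d/2$: it factorizes over coordinates, and each coordinate has variance $1/2$. This yields $\frac{d\varepsilon}{2a}$.

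For the entropy term, write
\[
\log G_{\varepsilon,a,A}(z)=\tfrac12\log\det A-\log a-\tfrac d2\log(\pi\varepsilon)-\|z\|_A^2/\varepsilon .
\]
Integrate against $G_{\varepsilon,a,A}$ and use the two identities just proved. The constant part contributes $\frac1a\bigl[\tfrac12\log\det A-\log a-\tfrac d2\log(\pi\varepsilon)\bigr]$. The quadratic part contributes $-\frac1\varepsilon\cdot\frac{d\varepsilon}{2a}=-\frac{d}{2a}$. Together these give the stated formula.

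The final identity follows by adding the second-moment identity to $\varepsilon$ times the entropy identity: the terms $+\frac{d\varepsilon}{2a}$ and $-\varepsilon\frac{d}{2a}$ cancel. All integrals converge absolutely because the integrands are bounded by polynomials times a Gaussian, so the splitting is justified. I do not expect any real obstacle. The only care needed is in tracking the Jacobian factor and the variance normalization $1/2$ of the weight $e^{-\|w\|^2}$.
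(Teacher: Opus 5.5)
Your proof is correct. It is the standard Gaussian computation the paper has in mind; the paper omits the proof as standard. In particular, the substitution $w=A^{1/2}z/\sqrt{\varepsilon}$ with Jacobian $\varepsilon^{d/2}(\det A)^{-1/2}$, the second moment $\pi^{-d/2}\int\|w\|^2e^{-\|w\|^2}\,dw=d/2$, and the cancellation of the $\pm\frac{d\varepsilon}{2a}$ terms in the final identity are all right.
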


The proof is standard and omitted.

\subsubsection{Quantization estimates}
As explained in \cref{sec:general_recipe}, the idea of our proof is to define ``leading'' subcouplings that drive the limit and complete them by ``remainder'' couplings of small mass to satisfy the marginal constraints. The following lemmas will be used to bound these remainder terms. We divide the estimates into three parts: first for \(p\in(1,2]\), then for \(p>2\), and finally for EOT.

\begin{lemma}[Quantization estimate for $p\leq2$]\label{lemma:quant-p-small} 
Fix $p\in (1,2]$ and $\lambda\in \mathcal M^+(\mathbb R^d)$ such that $\lambda \ll \cL^d$, with
\[
\int \|x\|^{2+\beta}\,d\lambda(x)<\infty
\]
for some $\beta>0$. Then there exists a constant $C_{d,p,\beta}$ such that, for every $\eta>0$ small enough,
\begin{equation}
 \label{eq:claim-quantization-finite}
{\rm ROT}_{\eta,p}(\lambda,\lambda)
\leq  
C_{d,p,\beta}
\left( \int \|x\|^{2+\beta} d\lambda +\lambda(\mathbb R^d)\right)^{1-\frac{2}{d(p-1)+2}}
\lambda(\mathbb R^d)^{\frac{2(2-p)}{d(p-1)+2}}
\eta^{\frac{2}{d(p-1)+2}} .
\end{equation}
\end{lemma}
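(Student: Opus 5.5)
The plan is to build an explicit block coupling and bound the regularization cost with the Jensen argument for concave $\ell(r)=r^{p-1}$ from \cref{sec:general_recipe}. Write $M:=\lambda(\mathbb R^d)$ and $m_{2+\beta}:=\int\|x\|^{2+\beta}d\lambda$. Given a partition $(Q_i)_{i\le N'}$ of $\mathbb R^d$ into Borel cells with masses $m_i:=\lambda(Q_i)$, define
\[
\pi:=\sum_{i:\,m_i>0}\frac{1}{m_i}\,\lambda|_{Q_i}\otimes\lambda|_{Q_i}\in\Pi(\lambda,\lambda).
\]
Its density with respect to $\lambda\otimes\lambda$ equals $1/m_i$ on $Q_i\times Q_i$. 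The penalty is therefore
\[
\sum_i m_i^2 m_i^{-p}=\sum_i m_i\,(1/m_i)^{p-1}\le M^{2-p}N^{p-1},
\]
where $N$ is the number of cells with positive mass; this uses Jensen and concavity of $r\mapsto r^{p-1}$ for $p\le 2$. The transport cost is at most $\sum_i m_i\,{\rm diam}(Q_i)^2$. The point is that the penalty bound needs no lower bound on the $m_i$.

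For the cells, I would use a dyadic-annulus quantization adapted to the moment. Fix a scale $h\in(0,1)$. Cover the ball $\ball(0,1)$ and each annulus $A_k:=\{2^{k-1}\le\|x\|<2^k\}$, $k\ge1$, by cubes of side $h_k:=h\,2^{k\theta}$, with $\theta\in(0,1)$ to be chosen. Stop at some $k_{\max}$ and put everything outside $\ball(0,2^{k_{\max}})$ into a single far cell $F$, paired via the product coupling. The number of cubes is $N\lesssim\sum_{k\le k_{\max}}2^{kd}/h_k^d\lesssim h^{-d}\sum_k 2^{kd(1-\theta)}$. The transport cost of annulus $k$ is at most $d\,h_k^2\lambda(A_k)\le d\,h^2 2^{k2\theta}2^{-(k-1)(2+\beta)}\int_{A_k}\|x\|^{2+\beta}d\lambda$. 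For the far cell, $\int\|x-y\|^2d\pi_F\le 4\int_F\|x\|^2d\lambda\le 4\,2^{-k_{\max}\beta}m_{2+\beta}$, and it adds only $M^{2-p}$-type mass to the Jensen bound, since it is one more cell.

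The choice of parameters is the delicate part. To avoid a diverging count, the simplest robust choice is to truncate: take $k_{\max}$ with $2^{-k_{\max}\beta}\approx h^2$, so the tail transport is $O(h^2 m_{2+\beta})$. The count becomes $N\lesssim h^{-d}2^{k_{\max}d(1-\theta)}$. Choosing $\theta$ close to $1$, the annulus transport sum $\sum_k 2^{k(2\theta-2-\beta)}$ is finite as long as $2\theta<2+\beta$, which holds for any $\theta\le1$. With $\theta=1$ the count is $N\lesssim h^{-d}k_{\max}\sim h^{-d}\log(1/h)$, which leaves a stray logarithm. To kill it, I would take $\theta=1+\beta/4$ instead. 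This keeps the transport sum convergent, since $2\theta-2-\beta=-\beta/2<0$. It also makes the count $N\lesssim h^{-d}\sum_k2^{-kd\beta/4}\lesssim h^{-d}$ uniformly, so no truncation is needed beyond bounded annuli, because cells grow faster than the annuli. One caveat: only nonempty cubes count, and if the side $h_k$ exceeds $2^k$ a single cell simply covers the annulus, which only helps. Altogether this gives
\[
\mathrm{cost}\lesssim h^2(m_{2+\beta}+M)+\eta M^{2-p}h^{-d(p-1)}.
\]
Cubes in the unit ball contribute $h^2M$.

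Finally, optimize over $h$. Setting $S:=m_{2+\beta}+M$ and balancing $h^2S=\eta M^{2-p}h^{-d(p-1)}$ gives $h=(\eta M^{2-p}/S)^{1/(d(p-1)+2)}$. The resulting value is
\[
S^{1-\frac{2}{d(p-1)+2}}\,M^{\frac{2(2-p)}{d(p-1)+2}}\,\eta^{\frac{2}{d(p-1)+2}},
\]
which is exactly \eqref{eq:claim-quantization-finite}. The requirement $h<1$ is where ``$\eta$ small enough'' enters. I expect the main obstacle to be the bookkeeping of the cell count, namely showing $N\lesssim h^{-d}$ with constants depending only on $d,\beta$. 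The growing-cell choice $\theta>1$ should handle this, and the first thing I would check is that the outer annuli then contribute only $O(1)$ cells each, summably.
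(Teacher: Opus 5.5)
Your skeleton is the paper's: block product couplings $\sum_i m_i^{-1}\lambda|_{Q_i}\otimes\lambda|_{Q_i}$, the Jensen bound $\sum_i m_i^{2-p}\le N^{p-1}\lambda(\R^d)^{2-p}$ (no lower bound on the $m_i$ needed), and optimization in the cell scale. The difference is the quantization ingredient. The paper uses near-optimal $n$-point quantizers with their nearest-neighbour partitions and bounds the cost by $8e_{n,2}^2(\lambda)$. It then cites the Pierce-type bound $e_{n,2}^2(\lambda)\le C_{d,\beta}(\lambda(\R^d)+\int\|x\|^{2+\beta}d\lambda)\,n^{-2/d}$ from \cite[Corollary~6.7]{Siegfried.Harald.Springer.2000}. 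You instead build the partition by hand from dyadic shells with growing cubes, which in effect reproves that bound. This is a legitimate, self-contained alternative to the citation, and your choice $\theta=1+\beta/4$ does remove the logarithm.

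However, your final claim that growing cells make truncation unnecessary is false, and the cell count fails as stated. Every shell $A_k$ with $\lambda(A_k)>0$ contributes at least one cell, so the count per shell is of order $(1+2^k/h_k)^d$, not $(2^k/h_k)^d$. Once $h_k\ge 2^k$, each shell still costs one cell, and infinitely many $O(1)$ terms do not sum. For $\lambda$ with unbounded support you get $N=\infty$, so your Jensen bound is vacuous. For $p=2$ the penalty $\sum_i m_i^{0}=N$ is genuinely infinite.

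The repair is to keep the far cell $F=\{\|x\|\ge 2^{k_{\max}}\}$ you introduced earlier, say with $k_{\max}:=\lceil (2/\beta)\log_2(1/h)\rceil$. Then $h_k/2^k=h\,2^{k\beta/4}\le 2^{\beta/4}h^{1/2}<1$ for $k\le k_{\max}$ and $h$ small. Hence $N\le 1+(4/h)^d+\sum_{k\le k_{\max}}(4\cdot 2^k/h_k)^d\lesssim h^{-d}$ with a constant depending only on $d,\beta$. The far cell costs at most $4\int_F\|x\|^2d\lambda\le 4\cdot 2^{-k_{\max}\beta}\int\|x\|^{2+\beta}d\lambda\le 4h^2\int\|x\|^{2+\beta}d\lambda$. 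The shells cost at most $d\,2^{2+\beta}h^2\int\|x\|^{2+\beta}d\lambda$, as you computed. With this correction the rest of your argument goes through and yields \eqref{eq:claim-quantization-finite}.
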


\begin{proof}
If $\lambda(\mathbb R^d)=0$, the result is immediate by taking the zero coupling. Hence
we assume that $\lambda(\mathbb R^d)>0$. Let
\[
e_{n,2}^2(\lambda)
:=
\inf_{x_1,\dots,x_n\in\mathbb R^d}
\int \inf_{1\leq i\leq n}\|x-x_i\|^2\,d\lambda(x)
\]
be the quadratic quantization error of $\lambda$. Choose points $x_1,\dots,x_n$
whose quantization error is at most $2e_{n,2}^2(\lambda)$, and let
$(C_i)_{i=1}^n$ be the associated nearest-neighbor partition, with
ties broken such that the sets $C_i$ are Borel. Set
$m_i:=\lambda(C_i)$ and note that $\sum_i m_i=\lambda(\mathbb R^d)$. Consequently, we have
\[
\pi_n
:=
\sum_{i:m_i>0}
\frac{1}{m_i}\,
\lambda|_{C_i}\otimes \lambda|_{C_i} \in\Pi(\lambda,\lambda) \quad {\rm and} \quad 
\frac{d\pi_n}{d(\lambda\otimes \lambda)}
=
\sum_{i:m_i>0}
\frac{{\bf 1}_{C_i\times C_i}}{m_i}.
\]
Furthermore, as $t\mapsto t^{2-p}$ is concave thanks to $p\in(1,2]$,
\begin{equation}
    \label{eq:quantization-jensen}
    \int
\left(
\frac{d\pi_n}{d(\lambda\otimes \lambda)}
\right)^p
d(\lambda\otimes \lambda)
=
\sum_{i:m_i>0}m_i^{2-p}
\leq n^{p-1}\lambda(\mathbb R^d)^{2-p}.
\end{equation}
For the cost term, if $x,y\in C_i$, then
$\|x-y\|^2\leq 2\|x-x_i\|^2+2\|y-x_i\|^2$, and hence
\[
\int \|x-y\|^2\,d\pi_n(x,y)
\leq
4\sum_i\int_{C_i}\|x-x_i\|^2\,d\lambda(x)
\leq
8e_{n,2}^2(\lambda).
\]
Thus, for every $n\geq1$,
\[
{\rm ROT}_{\eta,p}(\lambda,\lambda)
\leq
8e_{n,2}^2(\lambda)+\eta n^{p-1}\lambda(\mathbb R^d)^{2-p}.
\]
By the quantization estimate \cite[Corollary~6.7]{Siegfried.Harald.Springer.2000} applied to
$\lambda/\lambda(\mathbb R^d)$,
\[
e_{n,2}^2(\lambda)
\leq
C_{d,\beta}
\left(\lambda(\mathbb R^d)+\int \|x\|^{2+\beta}d\lambda\right)n^{-2/d}.
\]
Consequently,
\[
{\rm ROT}_{\eta,p}(\lambda,\lambda)
\leq
C_{d,\beta}
\left(\lambda(\mathbb R^d)+\int \|x\|^{2+\beta}d\lambda\right)n^{-2/d}
+\eta \lambda(\mathbb R^d)^{2-p} n^{p-1},
\]
and minimizing the bound over $n\in\mathbb N$ gives \eqref{eq:claim-quantization-finite}.
\end{proof}

For $p>2$, the proof of \Cref{lemma:quant-p-small} fails at~\eqref{eq:quantization-jensen}. In this setting, we need the additional assumptions \ref{assumption:cone-2} and \ref{assumption:hoelder_densities-2}.  

\begin{lemma}[Quantization estimate for $p>2$, interior]
\label{lemma:weighted-quantization-large-p}
Fix \(p>2\). Let \(Q\subset\mathbb R^d\) be a bounded open set with Lipschitz
boundary. Let $P,\lambda \in \mathcal{M}^+(Q)$ be such that \(P=f\,d\cL^d\) and \(\lambda=\rho\,d\cL^d\). Assume that, for some \(\theta\in(0,1]\),
\[
0<\underline f\le f(x)\le \overline f<\infty,
\qquad
0<\underline a\,\theta\le \rho(x)\le \overline a\,\theta<\infty
\]
for \(\cL^d\)-a.e.~\(x\in Q\). Then there exists a constant \(C\), depending
only on \(d,p,\underline f,\overline f,\underline a,\overline a\) and on the
Lipschitz constant of \(Q\), such that, for every sufficiently small
\(\varepsilon>0\), there exists \(\pi_\varepsilon\in\Pi(\lambda,\lambda)\)
with \(\pi_\varepsilon\ll P\otimes P\) satisfying
\[
\int_{Q\times Q}\|x-y\|^2\,d\pi_\varepsilon(x,y)
+
\varepsilon
\int_{Q\times Q}
\left(
\frac{d\pi_\varepsilon}{d(P\otimes P)}
\right)^p
\,dP(x)dP(y)
\le
C |Q|\,
\theta^{1+\frac{2(p-1)}{d(p-1)+2}}
\varepsilon^{\frac{2}{d(p-1)+2}} .
\]
\end{lemma}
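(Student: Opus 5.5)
We build the coupling by block (product-on-cells) quantization with a partition of \(Q\) into cells of comparable Lebesgue measure and comparable diameter. Fix a scale \(h>0\), to be chosen as a function of \(\varepsilon\) and \(\theta\). We intersect \(Q\) with the cubic grid \(h\mathbb Z^d\). Cubes contained in \(Q\) are kept. A boundary piece \(Q\cap(\text{cube})\) can have tiny measure, so we merge it with a neighboring interior cube. The Lipschitz boundary assumption, via the local graph representation with constants \(L,r_0\), ensures that for \(h\le h_0(L,r_0)\) one can produce a partition \((C_i)_{i=1}^N\) of \(Q\) with
\[
c\,h^d\le |C_i|\le C h^d,\qquad \operatorname{diam}C_i\le C h .
\]
Here \(c,C\) depend only on \(d\) and \(L\). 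Consequently \(N\le C|Q|h^{-d}\). This geometric step is the main obstacle. To carry it out, we first try: in each boundary chart, take the columns over \((d-1)\)-dimensional cubes of side \(h\) and cut them into vertical segments above the graph \(x_d>g(x')\). The top and bottom segments are absorbed into their neighbours, so each cell contains a cube of side comparable to \(h/(1+L)\). A covering of \(\partial Q\) by finitely many charts, with overlaps resolved by assigning each point to one chart, then gives the bounds. Since the final estimate involves only \(|Q|\) and not the number of charts, we need \(|Q|\gtrsim r_0^d\). This can be absorbed, or we restrict to \(h\) small; for sufficiently small \(\varepsilon\) this is fine.

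Next we define \(m_i:=\lambda(C_i)\) and \(\pi_\varepsilon:=\sum_i m_i^{-1}\lambda|_{C_i}\otimes\lambda|_{C_i}\in\Pi(\lambda,\lambda)\). Its density with respect to \(P\otimes P\) is \(\rho(x)\rho(y)/(m_i f(x)f(y))\) on \(C_i\times C_i\), so \(\pi_\varepsilon\ll P\otimes P\). The density bounds give \(m_i\ge \underline a\,\theta\,c\,h^d\). Hence on \(C_i\times C_i\) the density is bounded by
\[
\frac{\overline a^2\theta^2}{\underline f^{2}\,\underline a\,\theta\, c h^d}=C\theta h^{-d}.
\]
The regularization term is therefore at most
\[
\varepsilon\sum_i (C\theta h^{-d})^p P(C_i)^2\le C\varepsilon\theta^p h^{-dp}\sum_i h^{2d}\le C\varepsilon|Q|\theta^p h^{-d(p-1)} .
\]
The transport term is at most \(\sum_i (Ch)^2 m_i\le C h^2\lambda(Q)\le C|Q|\theta h^2\).

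Finally we balance the two terms, \(\theta h^2=\varepsilon\theta^p h^{-d(p-1)}\), which gives
\[
h=(\varepsilon\theta^{p-1})^{\frac1{d(p-1)+2}} .
\]
Then the total cost is bounded by
\[
C|Q|\,\theta\,(\varepsilon\theta^{p-1})^{\frac{2}{d(p-1)+2}}=C|Q|\,\theta^{1+\frac{2(p-1)}{d(p-1)+2}}\varepsilon^{\frac2{d(p-1)+2}},
\]
as claimed. Since \(\theta\le1\), \(h\le\varepsilon^{1/(d(p-1)+2)}\to0\), so the requirement \(h\le h_0\) holds uniformly in \(\theta\) for \(\varepsilon\) sufficiently small. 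Replacing \(h\) by a nearby admissible grid size only changes constants.
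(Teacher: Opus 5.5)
Your analytic part coincides with the paper's proof and is correct. The block coupling $\sum_i m_i^{-1}\lambda|_{C_i}\otimes\lambda|_{C_i}$, the transport bound $C|Q|\theta h^2$, the $L^p$ bound $C\varepsilon|Q|\theta^p h^{-d(p-1)}$ obtained from $m_i\ge c\,\theta h^d$, and the choice $h=(\varepsilon\theta^{p-1})^{1/(d(p-1)+2)}$ are exactly what the paper does.

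The gap is the geometric step, which you flag but do not complete. Everything rests on \emph{every} cell satisfying $|C_i|\ge c\,h^d$ with $c=c(d,L)$. Since $p>2$, a cell's contribution $\theta^{2(p-1)}m_i^{2-p}$ blows up as $m_i\to0$, so a single sliver ruins the estimate. Inside a single chart and away from its edges, your column construction works: cutting each column at heights $\max g+kh$ and absorbing the bottom piece gives cells containing a cube of side $h$. But you never say how the rotated column grids of different charts are glued to one another and to the axis-parallel interior grid. ``Assigning each point of an overlap to one chart'' is precisely what produces pieces of arbitrarily small measure. Moreover, near a narrow Lipschitz wedge, a boundary grid piece need not even be adjacent to a grid cube contained in $Q$, so ``merge with a neighboring interior cube'' is not well defined as stated. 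Your worry that one needs $|Q|\gtrsim r_0^d$ is unfounded: $N\le |Q|/(c h^d)$ follows directly from the lower bound on $|C_i|$.

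The missing ingredient is the uniform lower density estimate $|Q\cap\ball(x,r)|\ge c\,r^d$ for $x\in Q$ and $0<r\le r_0$, with $c$ depending only on $d$ and $L$. The paper combines it with a maximal $h$-separated family $x_1,\dots,x_N\in Q$ and the nearest-neighbor partition. For this partition, $Q\cap\ball(x_i,h/2)\subset C_i\subset Q\cap\ball(x_i,h)$ up to null sets. This gives $c(h/2)^d\le|C_i|\le Ch^d$, $\operatorname{diam}C_i\le 2h$ and $N(h/2)^d\le |Q|/c$ in one stroke, with no charts at all.

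If you prefer to keep your grid, the same estimate repairs it:
\begin{itemize}
\item Call a grid cube $S$ fat if $|S\cap Q|\ge 3^{-d}c\,h^d$.
\item Since $\ball(x,h)$ meets at most $3^d$ grid cubes, every $x\in Q$ lies within distance $C(d)h$ of a fat cube.
\item So each thin piece can be attached to a fat cube at distance $O(h)$, and each fat cube receives at most $C(d)$ thin pieces.
\item The resulting cells have measure comparable to $h^d$ and diameter $O(h)$.
\end{itemize}
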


\begin{proof}
Since \(Q\) is bounded with Lipschitz boundary, it satisfies a uniform density
estimate: there exist constants \(c>0\) and \(r_0>0\), depending only on the
Lipschitz constant of \(Q\), such that
\[
|Q\cap B(x,r)|\ge c r^d,
\qquad x\in Q,\quad 0<r\le r_0 .
\]
Fix \(\varepsilon>0\) and set
\[
h:=\theta^{\frac{p-1}{d(p-1)+2}}
\varepsilon^{\frac{1}{d(p-1)+2}} .
\]
For \(\varepsilon>0\) sufficiently small, \(h\le r_0\). Choose a maximal
\(h\)-separated family \(x_1,\ldots,x_N\in Q\). Thus
\[
\|x_i-x_j\|\ge h,\qquad i\neq j,
\]
and, by maximality,
\[
Q\subset \bigcup_{i=1}^N B(x_i,h).
\]
Let \(C_1,\ldots,C_N\) be the associated nearest-neighbor partition of
\(Q\), with ties broken so that the partition remains measurable. Then, up to \(\cL^d\)-null sets,
\[
Q\cap B(x_i,h/2)\subset C_i\subset Q\cap B(x_i,h).
\]
Consequently,
\[
c h^d\le |C_i|\le C h^d,
\qquad i=1,\ldots,N,
\]
and, since the sets \(Q\cap B(x_i,h/2)\) are pairwise disjoint,
$N h^d\le C |Q|.$ Set $m_i:=\lambda(C_i).$ 
By the assumptions on \(\rho\), we have
\[
c\,\theta h^d\le m_i\le C\,\theta h^d,
\qquad i=1,\ldots,N.
\]
Define
\[
\pi_\varepsilon
:=
\sum_{i=1}^N
\frac{1}{m_i}\,
\lambda|_{C_i}\otimes\lambda|_{C_i}.
\]
Then \(\pi_\varepsilon\in\Pi(\lambda,\lambda)\). Indeed, for every Borel set
\(A\subset\mathbb R^d\),
\[
(\operatorname{pr}_1)_\#\pi_\varepsilon(A)
=
\sum_{i=1}^N
\lambda(A\cap C_i)
=
\lambda(A),
\]
and the same argument gives $(\operatorname{pr}_2)_\#\pi_\varepsilon=\lambda .$ We first estimate the transport part. Since \(C_i\subset B(x_i,h)\), we have
\(\operatorname{diam}(C_i)\le 2h\) and hence
\[
\int \|x-y\|^2\,d\pi_\varepsilon(x,y)
\le
C h^2\sum_{i=1}^N m_i
=
C h^2\lambda(Q)
\le
C |Q|\theta h^2 .
\]

Next, we estimate the \(L^p(P\otimes P)\)-term. On \(C_i\times C_i\),
\[
\frac{d\pi_\varepsilon}{d(P\otimes P)}(x,y)
=
\frac{\rho(x)\rho(y)}{m_i f(x)f(y)}.
\]
Therefore,
\[
\int_{C_i\times C_i}
\left(
\frac{d\pi_\varepsilon}{d(P\otimes P)}
\right)^p
dP(x)dP(y)
=
m_i^{-p}
\left(
\int_{C_i}\rho(x)^p f(x)^{1-p}\,dx
\right)^2 .
\]
Since
\[
\rho(x)^p f(x)^{1-p}
=
\rho(x)\left(\frac{\rho(x)}{f(x)}\right)^{p-1}
\le
C\theta^{p-1}\rho(x),
\]
we get
\[
\int_{C_i}\rho(x)^p f(x)^{1-p}\,dx
\le
C\theta^{p-1}m_i
\]
and hence
\[
\int_{C_i\times C_i}
\left(
\frac{d\pi_\varepsilon}{d(P\otimes P)}
\right)^p
dP(x)dP(y)
\le
C\theta^{2(p-1)}m_i^{2-p}.
\]
In view of \(p>2\) and \(m_i\ge c\theta h^d\),
\[
m_i^{2-p}\le C\theta^{2-p}h^{d(2-p)}.
\]
Summing over \(i=1,\ldots,N\) and using \(N h^d\le C|Q|\), we have
\[
\int
\left(
\frac{d\pi_\varepsilon}{d(P\otimes P)}
\right)^p
dP\,dP
\le
C\theta^{2(p-1)}\theta^{2-p}h^{d(2-p)}N
\le
C |Q|\theta^p h^{-d(p-1)} .
\]
Combining the two estimates, we obtain
\[
\int \|x-y\|^2\,d\pi_\varepsilon(x,y)
+
\varepsilon
\int
\left(
\frac{d\pi_\varepsilon}{d(P\otimes P)}
\right)^p
dP\,dP
\le
C|Q|
\left(
\theta h^2
+
\varepsilon\theta^p h^{-d(p-1)}
\right).
\]
With
\[
h=\theta^{\frac{p-1}{d(p-1)+2}}
\varepsilon^{\frac{1}{d(p-1)+2}}
\]
we have
\[
\theta h^2
=
\theta^{1+\frac{2(p-1)}{d(p-1)+2}}
\varepsilon^{\frac{2}{d(p-1)+2}}
\qquad
\text{and}
\qquad
\varepsilon\theta^p h^{-d(p-1)}
=
\theta^{1+\frac{2(p-1)}{d(p-1)+2}}
\varepsilon^{\frac{2}{d(p-1)+2}}.
\]
Therefore,
\[
\int \|x-y\|^2\,d\pi_\varepsilon(x,y)
+
\varepsilon
\int
\left(
\frac{d\pi_\varepsilon}{d(P\otimes P)}
\right)^p
dP\,dP
\le
C |Q|\,
\theta^{1+\frac{2(p-1)}{d(p-1)+2}}
\varepsilon^{\frac{2}{d(p-1)+2}},
\]
which proves the claim.
\end{proof}

While the preceding lemma will be used for the bulk of the remainder coupling on the interior, the next estimate treats the complementary boundary layer. It uses the same local construction, but is stated separately because the set  \(G_\delta\) in its statement need not have a Lipschitz scale uniform in \(\delta\) or in
the relative position of the cubes. The point is that the constant in its assertion is uniform w.r.t.\ this geometry. In the main proof of the upper bound, the boundary contribution will be killed by \(|G_\delta|\to0\), while the bulk contribution will be killed by \(\theta\downarrow0\).

\begin{lemma}[Quantization estimate for $p>2$, boundary layer]
\label{le:boundary-layer-large-p}
Fix $p>2.$ 
Let $Q\subset\mathbb R^d$ be a bounded open set with Lipschitz boundary, and let
$I_1,\ldots,I_N\Subset Q$ be pairwise disjoint axis-parallel cubes. For
$\delta\ge0$, set
\[
G_\delta:=Q\setminus \bigcup_{i=1}^N I_i^\delta,
\qquad
I_i^\delta:=\{x\in I_i:\operatorname{dist}(x,\partial I_i)>\delta\}.
\]
Let $P,\lambda \in \mathcal{M}^+(G_\delta)$ be such that $P=f\,d\cL^d$ and $\lambda=\rho\,d\cL^d$. Assume that, $\cL^d$-a.e.\ on $G_\delta$,
\[
0<\underline f\le f\le \overline f<\infty,
\qquad
0<\underline a\le \rho\le \overline a<\infty .
\]
Then there exists \(\varepsilon_0>0\) such that for every $0<\varepsilon\leq\varepsilon_0$, there exists
$\pi_\varepsilon\in\Pi(\lambda,\lambda)$ with $\pi_\varepsilon\ll P\otimes P$ and
\[
\int\|x-y\|^2\,d\pi_\varepsilon(x,y)
+
\varepsilon\int
\left(\frac{d\pi_\varepsilon}{d(P\otimes P)}\right)^p dP\,dP
\le
C |G_\delta|\,\varepsilon^{\frac{2}{d(p-1)+2}},
\]
where $C$ depends only on $d,p,\underline f,\overline f,\underline a,\overline a$ and on the
Lipschitz constant of $Q$. In particular, $C$ is independent of $N$ and $\delta$; only $\varepsilon_0$ may depend on the family of cubes and on~$\delta$.
\end{lemma}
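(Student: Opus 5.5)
The plan is to repeat the block construction of \Cref{lemma:weighted-quantization-large-p} with $\theta=1$ and $h:=\varepsilon^{\frac1{d(p-1)+2}}$, applied to the set $G_\delta$ instead of $Q$. The only geometric input needed is a lower density bound: there are constants $c>0$ and $r_1>0$, depending only on $d$ and the Lipschitz constant of $Q$, such that
\[
|G_\delta\cap B(x,r)|\ge c\,r^d,\qquad x\in G_\delta,\ 0<r\le r_1 .
\]
Once this holds, the rest of the proof of \Cref{lemma:weighted-quantization-large-p} goes through unchanged. The steps are:
\begin{itemize}
\item[(1)] Take a maximal $h$-separated family $x_1,\dots,x_M$ in $G_\delta$ and form the nearest-neighbor cells $C_j\subset G_\delta\cap B(x_j,h)$. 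These satisfy $c h^d\le |C_j|\le Ch^d$ and $Mh^d\le C|G_\delta|$.
\item[(2)] Set $\pi_\varepsilon=\sum_j m_j^{-1}\lambda|_{C_j}\otimes\lambda|_{C_j}$, where $m_j:=\lambda(C_j)$.
\item[(3)] The transport cost is at most $Ch^2\lambda(G_\delta)\le C|G_\delta|h^2$.
\item[(4)] The penalty is at most $C\sum_j m_j^{2-p}\le C M h^{d(2-p)}\le C|G_\delta|h^{-d(p-1)}$. Here $m_j\ge c h^d$ is exactly where the lower density bound enters, since $p>2$.
\item[(5)] With the chosen $h$, both terms equal $C|G_\delta|\varepsilon^{\frac{2}{d(p-1)+2}}$, and the constants depend only on the allowed data.
\end{itemize}
For $\delta=0$ we have $|G_0|=|Q\setminus\bigcup_i I_i|$. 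The density bound then holds by the same argument as below, and the $\delta=0$ case needs no separate treatment.

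The main obstacle is the density estimate with constants independent of $N$ and $\delta$. I would argue by cases on where $x\in G_\delta$ lies, with the threshold $r_1$ chosen depending only on $d$ and the Lipschitz data of $Q$.
\begin{itemize}
\item \emph{$x\notin\bigcup_i I_i$.} Since $Q\setminus\bigcup_i I_i$ is covered by the complement of finitely many disjoint cubes, I would use the corner/face structure of cubes. For any point outside an open cube $I$, at least a fixed fraction $2^{-d}$ of every ball around that point lies outside $I$: take the orthant pointing away from the nearest face. With several disjoint cubes this fraction can degenerate, so instead I would pass to the closed cone. For each $x$ I would pick a single coordinate orthant that avoids the cubes near $x$; this is possible because axis-parallel disjoint cubes cannot surround a point from all $2^d$ orthants while leaving $x$ outside them, by a convexity argument in each coordinate. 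This orthant is then intersected with the Lipschitz cone condition of $Q$.
\item \emph{$x\in I_i\setminus I_i^\delta$.} In this case $x$ lies within distance $\delta$ of $\partial I_i$ inside $I_i$. The shell $I_i\setminus I_i^\delta$ is a union of slabs. A ball $B(x,r)$ contains either a portion of the slab of width comparable to $\min(r,\delta)$, or, if $r>\delta$, a portion outside $I_i$ near the face, which reduces to the previous case.
\end{itemize}
If a clean uniform orthant argument fails, I would instead discretize: use a dyadic grid of mesh $h$, and merge cells of $G_\delta$ that are small together with a neighboring large cell. This is a standard merging argument in which every merged cell has measure in $[ch^d,Ch^d]$ and diameter $\le Ch$. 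Only $\varepsilon_0$ is allowed to depend on the cube family and $\delta$, and this freedom lets me take $h$ below any scale the configuration needs. That is what makes the merging argument work.
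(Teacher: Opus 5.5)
\paragraph{Verdict.} Your construction is the paper's: maximal $h$-separated centers, nearest-neighbour cells, the block coupling $\sum_j m_j^{-1}\lambda|_{C_j}\otimes\lambda|_{C_j}$ with $\theta=1$ and $h=\varepsilon^{1/(d(p-1)+2)}$. Steps (1)--(5) are fine once a lower density bound is available. The gap is that density bound, which is the whole content of the lemma.

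\paragraph{The density bound as you state it is false.} You ask for $r_1$ depending only on $d$ and the Lipschitz constant of $Q$. Take $I_1=(-2,0)\times(-1,1)^{d-1}$ and $I_2=(0,2)\times(-1,1)^{d-1}$, which share a face. Near the origin $G_\delta$ is the slab $\{|x_1|\le\delta\}$, so $|G_\delta\cap B(x,r)|\le C\delta r^{d-1}$, which is $\ll r^d$ once $r\gg\delta$. A cube at distance $g$ from a flat piece of $\partial Q$ fails in the same way for $r\gg g+\delta$. This breaks your step (4) concretely: if $h\gg\delta$, the cells along that slab have $m_j\sim\delta h^{d-1}$. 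Then $m_j^{2-p}$ exceeds the bound $Ch^{d(2-p)}$ by the unbounded factor $(h/\delta)^{p-2}$. So the admissible scale must depend on $\delta$ and on the cube family, which is why the statement lets $\varepsilon_0$ depend on them. Only the density constant can be uniform.

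\paragraph{Your case analysis also fails.}
\begin{itemize}
\item The claim that disjoint axis-parallel cubes cannot block all $2^d$ orthants at an exterior point is false. The squares $(0,1)^2$, $(-1,0)\times(0,1)$, $(-1,0)^2$, $(0,1)\times(-1,0)$ fill every quadrant at the origin.
\item The relevant obstacles are the cores $I_i^\delta$, not the cubes $I_i$.
\item In Case 2, the slab contributes only $\sim\delta r^{d-1}$ when $r>\delta$. The region ``outside $I_i$'' may be the core of a neighbouring cube.
\item For $\delta=0$, $G_0$ contains the faces of the cubes, where the density vanishes (shared faces, common corners). So the bound cannot hold at every point of $G_0$, and the centers must be chosen off a null set.
\end{itemize}

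\paragraph{What is needed, as in the paper.} Let $K_\delta$ be the union of the closed cores $\overline{I_i^\delta}$. Let $H_\delta$ be $Q\setminus K_\delta$ with the finitely many coordinate hyperplanes containing faces of these boxes removed, so that $H_\delta\subset G_\delta$ and $|G_\delta\setminus H_\delta|=0$. Choose $r_0$ below three scales:
\begin{itemize}
\item the localization radius of $Q$;
\item a fixed fraction of $\operatorname{dist}(K_\delta,\partial Q)$;
\item half the smallest positive side length of the rectangular grid cut out by those hyperplanes.
\end{itemize}
Fix $x\in H_\delta$ and $r\le r_0$. If $B(x,r)\cap K_\delta=\varnothing$, the Lipschitz density estimate for $Q$ applies. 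Otherwise $B(x,r)\subset Q$, and $x$ lies in an open grid rectangle disjoint from $K_\delta$ whose sides are all at least $2r_0$. Hence a coordinate orthant of $B(x,r)$ with vertex $x$ lies inside that rectangle.

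This gives $|H_\delta\cap B(x,r)|\ge c_0r^d$ with $c_0$ depending only on $d$ and the Lipschitz constant of $Q$. Only $r_0$, and hence $\varepsilon_0$, depends on the configuration. You can then run steps (1)--(5) on $H_\delta$ with $h\le r_0$.

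Your dyadic-merging fallback points in this direction, since it lets $h$ drop below a configuration scale. But it needs a nearby cell of mass at least $ch^d$ with $c$ uniform in $N$ and $\delta$. That is exactly this density statement, and you assert it as standard rather than prove it.
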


\begin{proof}
If \(|G_\delta|=0\), the claim follows by taking the zero coupling. Since
\(P,\lambda\ll\cL^d\), we may modify \(G_\delta\) on a null set.

Let
\[
K_\delta:=\bigcup_{i:I_i^\delta\neq\varnothing}\overline{I_i^\delta}.
\]
If \(K_\delta=\varnothing\), set \(H_\delta:=Q\). Otherwise, let
\(H_\delta\) be obtained from \(Q\setminus K_\delta\) by removing the finitely
many coordinate hyperplanes containing a face of one of the boxes
\(\overline{I_i^\delta}\). Then
\[
H_\delta\subset G_\delta,
\qquad
|G_\delta\setminus H_\delta|=0,
\]
so \(P\) and \(\lambda\) may be regarded as measures on \(H_\delta\).

We claim that there are \(c_0>0\), depending only on \(d\) and on the
Lipschitz constant of \(Q\), and \(r_0>0\), possibly depending on the cube
configuration and on \(\delta\), such that
\begin{equation}
    \label{eq:boundary-layer-density-large-p}
    |H_\delta\cap \ball(x,r)|\ge c_0 r^d,
    \qquad x\in H_\delta,\quad 0<r\le r_0 .
\end{equation}
For \(K_\delta=\varnothing\), this is the standard lower density estimate for Lipschitz domains. Otherwise, since \(K_\delta\Subset Q\), choose \(r_0\)
smaller than the localization radius for \(Q\), smaller than a fixed fraction
of \(\operatorname{dist}(K_\delta,\partial Q)\), and smaller than one half of
the minimal positive side-length of the rectangular cells in the coordinate
grid generated by the faces of the boxes \(\overline{I_i^\delta}\). If
\(\ball(x,r)\cap K_\delta=\varnothing\), the Lipschitz lower density estimate
for \(Q\) gives \eqref{eq:boundary-layer-density-large-p}. If
\(\ball(x,r)\cap K_\delta\neq\varnothing\), then
\(\ball(x,r)\subset Q\), and \(x\) belongs to a rectangular grid cell contained
in \(\mathbb R^d\setminus K_\delta\). By the choice of \(r_0\), one coordinate
orthant of \(\ball(x,r)\) with vertex \(x\) is contained in that cell. This
again gives \eqref{eq:boundary-layer-density-large-p}, after decreasing
\(c_0\). Thus only the admissible scale \(r_0\), not the density constant,
depends on \(N\) or \(\delta\).

Set $h:=\varepsilon^{1/(d(p-1)+2)}$, and take \(\varepsilon\) small enough that \(h\le r_0\). Let
\(x_1,\ldots,x_M\) be a maximal \(h\)-separated family in \(H_\delta\), and let
\(C_1,\ldots,C_M\) be a measurable nearest-neighbor partition of \(H_\delta\).
Then, up to null sets,
\[
H_\delta\cap \ball(x_k,h/3)\subset C_k\subset H_\delta\cap \ball(x_k,h),
\]
and hence
\begin{equation}
    \label{eq:boundary-layer-cell-bounds}
    c h^d\le |C_k|\le C h^d,
    \qquad
    M h^d\le C |G_\delta|.
\end{equation}
The constants here are independent of \(N\) and \(\delta\).

Set \(m_k:=\lambda(C_k)\), and define
\[
\pi_\varepsilon
:=
\sum_{k=1}^M
\frac{1}{m_k}\,
\lambda|_{C_k}\otimes\lambda|_{C_k}.
\]
Then \(\pi_\varepsilon\in\Pi(\lambda,\lambda)\) and
\(\pi_\varepsilon\ll P\otimes P\). Since \(f\) and \(\rho\) are bounded above
and below on \(G_\delta\), the estimates from the proof of
\Cref{lemma:weighted-quantization-large-p}, now with \(\theta=1\) and
\(H_\delta\) in place of \(Q\), give
\[
\int\|x-y\|^2\,d\pi_\varepsilon
\le
C |G_\delta| h^2
\]
and
\[
\int
\left(
\frac{d\pi_\varepsilon}{d(P\otimes P)}
\right)^p
dP\,dP
\le
C |G_\delta| h^{-d(p-1)} .
\]
Therefore
\[
\int\|x-y\|^2\,d\pi_\varepsilon
+
\varepsilon
\int
\left(
\frac{d\pi_\varepsilon}{d(P\otimes P)}
\right)^p
dP\,dP
\le
C |G_\delta|
\left(
h^2+\varepsilon h^{-d(p-1)}
\right).
\]
As \(h=\varepsilon^{1/(d(p-1)+2)}\), this is the desired bound.
\end{proof}

Finally, we give the quantization estimate for EOT. 
\begin{lemma}[Quantization estimate for EOT]\label{lemma:quant-eot}
Let \(\lambda\) be a finite positive measure on \(\mathbb R^d\) with finite \(2+\beta\) moment for some \(\beta>0\). Then there exists a constant
\(C_{d,\beta}>0\) such that, for every \(\eta\in(0,e^{-1})\),
\[
{\rm EOT}_{\eta}(\lambda,\lambda)
\leq
-\frac d2\,\lambda(\mathbb R^d) \,\eta\log(\eta)
+
C_{d,\beta}\eta
\left[
\lambda(\mathbb R^d)+\int_{\mathbb R^d}\|x\|^{2+\beta}\,d\lambda(x)+\lambda(\mathbb R^d) |\log \lambda(\mathbb R^d) |
\right].
\]
\end{lemma}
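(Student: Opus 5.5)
We mimic the proof of \cref{lemma:quant-p-small}, replacing the $L^p$ penalty by relative entropy. If $\lambda(\mathbb R^d)=0$ we take the zero coupling, so assume $M:=\lambda(\mathbb R^d)>0$.

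\emph{Quantized coupling.} For $n\in\mathbb N$, we choose points $x_1,\dots,x_n$ whose quadratic quantization error is at most $2e_{n,2}^2(\lambda)$. Let $(C_i)$ be a Borel nearest-neighbor partition, set $m_i:=\lambda(C_i)$, and define
\[
\pi_n:=\sum_{i:m_i>0}\frac1{m_i}\lambda|_{C_i}\otimes\lambda|_{C_i}\in\Pi(\lambda,\lambda).
\]
As before, the transport cost is at most $8e_{n,2}^2(\lambda)$. The density of $\pi_n$ with respect to $\lambda\otimes\lambda$ is $\sum_i m_i^{-1}\mathbf 1_{C_i\times C_i}$. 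With the convention ${\rm KL}(\pi|\lambda\otimes\bar\lambda)=\int \log\frac{d\pi}{d(\lambda\otimes\bar\lambda)}\,d\pi$ (no mass-correction terms, matching the definition used for probability measures), we get
\[
{\rm KL}(\pi_n|\lambda\otimes\lambda)=\sum_i m_i\log(1/m_i).
\]
Since $t\mapsto\log t$ is concave and $\sum_i (m_i/M)(1/m_i)\le n/M$, Jensen gives
\[
\sum_i m_i\log(1/m_i)\le M\log(n/M)=M\log n-M\log M.
\]
This is the $r\ell(r)$ mechanism with $\ell=\log$ from \cref{sec:general_recipe}.

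\emph{Quantization error and choice of $n$.} By \cite[Corollary~6.7]{Siegfried.Harald.Springer.2000} applied to $\lambda/M$,
\[
e_{n,2}^2(\lambda)\le C_{d,\beta}\,S\,n^{-2/d},\qquad S:=M+\int\|x\|^{2+\beta}d\lambda.
\]
Hence
\[
{\rm EOT}_\eta(\lambda,\lambda)\le 8C_{d,\beta}S\,n^{-2/d}+\eta M\log n+\eta M|\log M|.
\]
We choose $n:=\lceil \eta^{-d/2}\rceil$, so that $n^{-2/d}\le\eta$ and $\log n\le \frac d2\log(1/\eta)+\log 2$, using $\eta<e^{-1}<1$. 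This yields
\[
{\rm EOT}_\eta(\lambda,\lambda)\le -\tfrac d2 M\eta\log\eta+\eta\bigl(8C_{d,\beta}S+M\log 2+M|\log M|\bigr),
\]
which is the claim after enlarging the constant. The restriction $\eta<e^{-1}$ only guarantees $\log(1/\eta)\ge1$ and $n\ge1$.

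\emph{Main obstacle.} The delicate point is getting the leading coefficient exactly $\frac d2 M$ rather than a larger multiple. This requires choosing $n\asymp\eta^{-d/2}$, not the optimizer of $Sn^{-2/d}+\eta M\log n$ with an $S$-dependent prefactor, and absorbing all constants, including the possible mismatch between $S$ and $M$, into the $O(\eta)$ term. A secondary check is that the KL convention for non-probability measures matches the displayed bound. If the normalized convention $\int(\frac{d\pi}{d\lambda\otimes\lambda}\log\cdots-\cdots+1)$ were intended, the difference is a term of order $\eta(M^2+M)$. Such a term is harmless for how the lemma is used with small residual masses, but it would have to be tracked explicitly.
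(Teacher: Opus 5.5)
Your proposal is correct and follows the paper's proof step for step: the product coupling on nearest-neighbor quantization cells, Jensen with $\log$ for the entropy term, the Graf--Luschgy moment bound for the transport cost, and the choice $N=\lceil\eta^{-d/2}\rceil$. The only difference is cosmetic. The paper first normalizes to $P=\lambda/\lambda(\mathbb R^d)$ and then splits off $\lambda(\mathbb R^d)\log(1/\lambda(\mathbb R^d))$, which is exactly what your bound $M\log(n/M)$ does. Your reading of the KL convention (no mass-correction terms) also matches the definition used in the paper.
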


\begin{proof}
If \(\lambda(\R^d)=0\), the result is immediate by taking the zero coupling. Hence assume \(\lambda(\R^d)>0\), and set $P:=\frac{\lambda}{\lambda(\R^d)}.$  Define 
\(x_1,\ldots,x_N\) and  
\[
(C_i)_{i=1}^N,
\qquad 
p_i:=P(C_i),
\qquad
m_i:=\lambda(C_i)=\lambda(\R^d) p_i
\]
as in \Cref{lemma:quant-p-small} for $P$. 
Set
\[
\pi_N
:=
\sum_{i:m_i>0}
\frac{1}{m_i}\,
\lambda|_{C_i}\otimes\lambda|_{C_i} \in \Pi(\lambda,\lambda), 
\]
which satisfies
\[
\frac{d\pi_N}{d(\lambda\otimes\lambda)}
=
\sum_{i:m_i>0}
\frac{\mathbf 1_{C_i\times C_i}}{m_i}.
\]
For the transport cost, we derive $\int \|x-y\|^2\,d\pi_N(x,y)
\leq 
8 \lambda(\R^d) e_{N,2}^2(P)$ as in \cref{lemma:quant-p-small}.

Using the quantization estimate $e_{n,2}^2(P)
\leq
C_{d,\beta}
\left(1+\int \|x\|^{2+\beta}dP\right)N^{-2/d}$ and the identity
\[
\lambda(\R^d) \int \|x\|^{2+\beta}\,dP(x)
=
\int \|x\|^{2+\beta}\,d\lambda(x),
\]
we then get
\begin{equation}
    \label{eq:EOT-quant-cost}
    \int \|x-y\|^2\,d\pi_N(x,y)
\leq
C_{d,\beta}
\left[
\lambda(\mathbb R^d)+\int_{\mathbb R^d}\|x\|^{2+\beta}\,d\lambda(x)
\right]
N^{-2/d}.
\end{equation}

For the entropy term,
\[
\begin{aligned}
{\rm KL}(\pi_N\mid \lambda\otimes\lambda)
&=
\sum_{i:m_i>0}
m_i\log\left(\frac{1}{m_i}\right)  \\
&=
\lambda(\mathbb R^d)\sum_{i:p_i>0}p_i\log\left(\frac{1}{p_i}\right)
+
\lambda(\mathbb R^d)\log\left(\frac{1}{\lambda(\mathbb R^d)}\right).
\end{aligned}
\]
Since $(0,\infty)\ni t\mapsto \log(t)$ is concave, Jensen's inequality yields
\[
\sum_{i:p_i>0}p_i\log\left(\frac1{p_i}\right)
\leq
\log\left(\sum_{i:p_i>0}p_i\frac1{p_i}\right)
\leq \log N,
\]
so that
\[
{\rm KL}(\pi_N\mid \lambda\otimes\lambda)
\leq
\lambda(\mathbb R^d) \log N + \lambda(\mathbb R^d)|\log  \lambda(\mathbb R^d)|.
\]
Now choose
\(N_\eta:=\left\lceil \eta^{-d/2}\right\rceil\).
Since \(\eta\in(0,e^{-1})\),
\[
N_\eta^{-2/d}\leq \eta,
\qquad
\log N_\eta
\leq
-\frac d2\log\eta+\log 2,
\]
and we conclude using \eqref{eq:EOT-quant-cost}. 
\end{proof}

For ease of reference, we record the following corollary, obtained by applying \Cref{lemma:quant-eot} with parameter \(\eta/L\) and multiplying the resulting bound by \(L\).

\begin{corollary}\label{cor:eot-quant-scaled}
Let \(L>0\). Under the assumptions of \Cref{lemma:quant-eot}, there exists
\(C_{d,\beta,L}>0\) such that, for every \(\eta\in(0,Le^{-1})\),
\begin{multline*}
    \inf_{\pi\in\Pi(\lambda,\lambda)}
\left\{
L\int \|x-y\|^2\,d\pi(x,y)
+
\eta\,{\rm KL}(\pi\mid \lambda\otimes\lambda)
\right\}
\\
\leq
-\frac d2\,\lambda(\mathbb R^d) \,\eta\log(\eta)
+
C_{d,\beta,L}\eta
\left[
\lambda(\mathbb R^d)+\int_{\mathbb R^d}\|x\|^{2+\beta}\,d\lambda(x)+\lambda(\mathbb R^d) |\log \lambda(\mathbb R^d) |
\right].
\end{multline*}
\end{corollary}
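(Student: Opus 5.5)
The plan is a scaling reduction to \Cref{lemma:quant-eot}. Fix $\lambda$ and $L>0$, and for $\pi\in\Pi(\lambda,\lambda)$ write
\[
L\int\|x-y\|^2\,d\pi+\eta\,{\rm KL}(\pi\mid\lambda\otimes\lambda)
=
L\left[\int\|x-y\|^2\,d\pi+\frac{\eta}{L}\,{\rm KL}(\pi\mid\lambda\otimes\lambda)\right].
\]
Taking the infimum over $\pi$, the left-hand side of the corollary equals $L\,{\rm EOT}_{\eta/L}(\lambda,\lambda)$. The condition $\eta\in(0,Le^{-1})$ is exactly $\eta/L\in(0,e^{-1})$, so \Cref{lemma:quant-eot} applies with parameter $\eta/L$.

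Multiplying its conclusion by $L$ gives
\[
-\frac d2\,\lambda(\mathbb R^d)\,\eta\log\!\Big(\frac{\eta}{L}\Big)
+
C_{d,\beta}\,\eta\,\Big[\lambda(\mathbb R^d)+\int\|x\|^{2+\beta}\,d\lambda+\lambda(\mathbb R^d)|\log\lambda(\mathbb R^d)|\Big].
\]
The first term expands as
\[
-\frac d2\,\lambda(\mathbb R^d)\,\eta\log\eta+\frac d2\,\lambda(\mathbb R^d)\,\eta\log L .
\]
The extra piece is at most $\frac d2|\log L|\,\eta\,\lambda(\mathbb R^d)$. It is therefore absorbed into the bracket by taking
\[
C_{d,\beta,L}:=C_{d,\beta}+\frac d2|\log L| .
\]

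No step is a genuine obstacle. The only point to check is that the prefactor of $\eta$ in the lemma stays $C_{d,\beta}$ after rescaling: $L\cdot(\eta/L)=\eta$, so the $1/L$ factor inside the lemma cancels against the outer multiplication by $L$.
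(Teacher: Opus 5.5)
Your proposal is correct and is exactly the paper's argument: apply \Cref{lemma:quant-eot} with parameter $\eta/L$ and multiply the resulting bound by $L$. The paper leaves the extra term $\frac d2\lambda(\mathbb R^d)\,\eta\log L$ implicit; you absorb it explicitly via $C_{d,\beta,L}:=C_{d,\beta}+\frac d2|\log L|$.
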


\proofreadhere

\subsection{Proof of \Cref{theorem:main-p} for $p\in(1,2]$}
Throughout this subsection, fix $p\in(1,2]$ and set
\[
D:=d(p-1)+2,
\qquad
\alpha:=\frac{p-1}{D},
\qquad
s:=1-2\alpha=\frac{d(p-1)-2p+4}{D}.
\]
Notice that $s\ge0$ and, by \Cref{rk:finitenessOfLimit},
$f_\mu^s\in L^1(\mathbb R^d)$. For $n\in\mathbb N$, let
\[
J_n:=\{0,1,\dots,2n\,2^n-1\}^d
\]
and, for $j=(j_1,\dots,j_d)\in J_n$, set
\[
{\mathfrak Q}_{j,n}
:=
\prod_{\ell=1}^d
\left[
-n+\frac{j_\ell}{2^n},
\,-n+\frac{j_\ell+1}{2^n}
\right).
\]
We regard the family \(({\mathfrak Q}_{j,n})_{j\in J_n}\) as an a.e.\  
partition of \([-n,n]^d\). For
$0\le k\le n2^n-1$, define
\[
I_{k,n}:=
\left\{x:\frac{k}{2^n}\le f_\mu(x)<\frac{k+1}{2^n}\right\},
\qquad
I_{n2^n,n}:=\{x:f_\mu(x)\ge n\},
\]
and set $I_{j,k,n}:={\mathfrak Q}_{j,n}\cap I_{k,n}$. Finally, put
\[
f_n(x):=
\sum_{j\in J_n}\sum_{k=0}^{n2^n}
\frac{k}{2^n}\mathbf 1_{I_{j,k,n}}(x).
\]
Then $0\le f_n\le f_\mu$ and $f_n\uparrow f_\mu$ almost everywhere.

For $k\ge1$ and $I_{j,k,n}\ne\varnothing$, choose
$x_{j,k,n}\in I_{j,k,n}$ such that
\begin{equation}
    \label{eq:defx_j-k-n}
    \det\nabla^2\varphi(x_{j,k,n})
    \le
    \inf_{x\in I_{j,k,n}}\det\nabla^2\varphi(x)+\frac1n,
\end{equation}
and set $A_{j,k,n}:=\nabla^2\varphi(x_{j,k,n})$. If
$I_{j,k,n}=\varnothing$, choose $A_{j,k,n}$ arbitrarily. Let
\[
c_{j,k,n}^\varepsilon
:=
c_{\varepsilon,2^n/k,A_{j,k,n}}
\]
be the constant from \Cref{lemma:integrals}, and define
\[
\xi_{j,k,n}^\varepsilon(z)
:=
\varepsilon^{-\frac1{p-1}}
\left(c_{j,k,n}^\varepsilon-\frac1p\|z\|_{A_{j,k,n}}^2\right)_+^{\frac1{p-1}}.
\]
For $k\ge1$, define
\[
\pi_{j,k,n}^\varepsilon
:=
\frac{k^2}{4^n}
\xi_{j,k,n}^\varepsilon(x-x')
\mathbf 1_{I_{j,k,n}}(x)\mathbf 1_{I_{j,k,n}}(x')\,dx\,dx',
\]
and set $\pi_{j,0,n}^\varepsilon:=0$. Let
\[
\pi_{n,{\rm lead}}^\varepsilon
:=
\sum_{j\in J_n}\sum_{k=1}^{n2^n}\pi_{j,k,n}^\varepsilon.
\]
Since
\[
\int_{\mathbb R^d}\xi_{j,k,n}^\varepsilon(z)\,dz=\frac{2^n}{k},
\]
the first and second marginals of $\pi_{n,{\rm lead}}^\varepsilon$ have the
same density, denoted by $q_n^\varepsilon$, and
\[
0\le q_n^\varepsilon\le f_n\le f_\mu.
\]
Thus
\[
\lambda_{n,{\rm rem}}^\varepsilon
:=
\mu-(\operatorname{pr}_1)_\#\pi_{n,{\rm lead}}^\varepsilon
=
\mu-(\operatorname{pr}_2)_\#\pi_{n,{\rm lead}}^\varepsilon
=(f_\mu-q_n^\varepsilon)\,d\cL^d
\]
is a finite positive measure. Let
$\pi_{n,{\rm rem}}^\varepsilon\in
\Pi(\lambda_{n,{\rm rem}}^\varepsilon,
\lambda_{n,{\rm rem}}^\varepsilon)$, to be chosen below with
$\pi_{n,{\rm rem}}^\varepsilon\ll
\lambda_{n,{\rm rem}}^\varepsilon\otimes
\lambda_{n,{\rm rem}}^\varepsilon$, and set
\begin{equation}
    \label{eq:construction-p-small}
    \pi_n^\varepsilon
    :=\pi_{n,{\rm lead}}^\varepsilon+
      \pi_{n,{\rm rem}}^\varepsilon,
    \qquad
    \gamma_n^\varepsilon
    :=(\operatorname{Id}\times\nabla\varphi)_\#\pi_n^\varepsilon .
\end{equation}
Then $\pi_n^\varepsilon\in\Pi(\mu,\mu)$ and
$\gamma_n^\varepsilon\in\Pi(\mu,\nu)$.

By \Cref{lemma:MA-change-of-variables}, applied to
\(\lambda=\pi_n^\varepsilon\), the \(L^p\)-penalty is invariant under
\((x,x')\mapsto(x,\nabla\varphi(x'))\). Hence
\[
\left\|\frac{d\gamma_n^\varepsilon}{d(\mu\otimes\nu)}\right\|_{L^p(\mu\otimes\nu)}^p
=
\left\|\frac{d\pi_n^\varepsilon}{d(\mu\otimes\mu)}\right\|_{L^p(\mu\otimes\mu)}^p.
\]
Fix $\delta'>0$. The supports \(I_{j,k,n}\times I_{j,k,n}\) of the leading blocks are pairwise
disjoint a.e., so their \(L^p\)-contributions add. (The remainder block need not be disjoint from them.) Hence, using
$(a+b)^p\le(1+\delta')a^p+C_{p,\delta'}b^p$,
\begin{equation}
    \label{young-ineq}
\left\|\frac{d\pi_n^\varepsilon}{d(\mu\otimes\mu)}\right\|_{L^p(\mu\otimes\mu)}^p
\le
(1+\delta')
\sum_{j\in J_n}\sum_{k=1}^{n2^n}
\left\|\frac{d\pi_{j,k,n}^\varepsilon}{d(\mu\otimes\mu)}\right\|_{L^p(\mu\otimes\mu)}^p
+
C_{p,\delta'}
\left\|\frac{d\pi_{n,{\rm rem}}^\varepsilon}{d(\mu\otimes\mu)}\right\|_{L^p(\mu\otimes\mu)}^p .
\end{equation}
Moreover, since $\lambda_{n,{\rm rem}}^\varepsilon\le\mu$,
\[
\left\|\frac{d\pi_{n,{\rm rem}}^\varepsilon}{d(\mu\otimes\mu)}\right\|_{L^p(\mu\otimes\mu)}^p
\le
\left\|\frac{d\pi_{n,{\rm rem}}^\varepsilon}
{d(\lambda_{n,{\rm rem}}^\varepsilon\otimes
\lambda_{n,{\rm rem}}^\varepsilon)}\right\|_{L^p(\lambda_{n,{\rm rem}}^\varepsilon\otimes
\lambda_{n,{\rm rem}}^\varepsilon)}^p.
\]
By \Cref{lemma:integrated-bregman-identity},
\[
\int\|x-y\|^2\,d\gamma_n^\varepsilon(x,y)-{\rm OT}(\mu,\nu)
=
2\int \mathbb D(x,\nabla\varphi(x'))\,d\pi_n^\varepsilon(x,x').
\]
Accordingly, define the leading contribution
\[
\mathcal T_\varepsilon^{\delta'}
:=
\sum_{j\in J_n}\sum_{k=1}^{n2^n}
\left(
2\int\mathbb D(x,\nabla\varphi(x'))\,d\pi_{j,k,n}^\varepsilon
+
(1+\delta')\varepsilon
\left\|\frac{d\pi_{j,k,n}^\varepsilon}{d(\mu\otimes\mu)}\right\|_{L^p(\mu\otimes\mu)}^p
\right)
\]
and the scaled remainder
\[
\mathcal R_{\varepsilon,n}
:=
\varepsilon^{-2/D}
\left(
2\int\mathbb D(x,\nabla\varphi(x'))\,d\pi_{n,{\rm rem}}^\varepsilon
+
C_{p,\delta'}\varepsilon
\left\|\frac{d\pi_{n,{\rm rem}}^\varepsilon}
{d(\lambda_{n,{\rm rem}}^\varepsilon\otimes
\lambda_{n,{\rm rem}}^\varepsilon)}\right\|_{L^p(\lambda_{n,{\rm rem}}^\varepsilon\otimes
\lambda_{n,{\rm rem}}^\varepsilon)}^p
\right).
\]
The estimates above imply, for every fixed $n$,
\begin{equation}
    \label{eq:upper-split-small-p}
\frac{{\rm ROT}_{\varepsilon,p}(\mu,\nu)-{\rm OT}(\mu,\nu)}
{\varepsilon^{2/D}}
\le
\varepsilon^{-2/D}\mathcal T_\varepsilon^{\delta'}
+
\mathcal R_{\varepsilon,n}.
\end{equation}

\begin{lemma}\label{lemma:control-leading}
Under the assumptions of \Cref{theorem:main-p}(i), for every $\delta'>0$,
\[
\limsup_{n\uparrow\infty}\limsup_{\varepsilon\downarrow0}
\varepsilon^{-2/D}\mathcal T_\varepsilon^{\delta'}
\le
(1+\delta')\mathfrak C_{d,p}\mathcal K(\mu,\nu,p).
\]
\end{lemma}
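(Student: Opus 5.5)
The plan is to compute each block's contribution exactly at fixed $n$, pass to the limit $\varepsilon\downarrow0$ block by block, and then let $n\to\infty$ using monotone convergence together with the choice \eqref{eq:defx_j-k-n}. Fix $n$ and a block $(j,k)$ with $k\ge1$ and $I:=I_{j,k,n}$ of positive measure. Write $a:=k/2^n$, $A:=A_{j,k,n}$ and $c:=c^\varepsilon_{j,k,n}$. Because $I\subset{\mathfrak Q}_{j,n}\subset[-n,n]^d$, all points lie in a fixed compact set, so \Cref{lemma:Bregman_ptwisebd} holds there with a uniform modulus $\omega$.

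\emph{Transport term.} The kernel is supported in $\|z\|\le R\varepsilon^{1/D}$, with $R$ uniform over the finitely many blocks. On this support, \Cref{lemma:Bregman_ptwisebd} gives
\[
2\mathbb D(x,\nabla\varphi(x'))\le \|x-x'\|^2_{\nabla^2\varphi(x)}+\omega(R\varepsilon^{1/D})\|x-x'\|^2 .
\]
To replace $\nabla^2\varphi(x)$ by $A$, I will bound $\|z\|^2_{\nabla^2\varphi(x)}\le \|z\|^2_A+\|\nabla^2\varphi(x)-A\|\,\|z\|^2$. The matrix error is at most $\operatorname{osc}_{{\mathfrak Q}_{j,n}}\nabla^2\varphi$, which is small uniformly in $j$ as $n\to\infty$ because $\nabla^2\varphi$ is uniformly continuous on $[-n-1,n+1]^d$. (At fixed $n$ this error does not vanish as $\varepsilon\to0$, but it vanishes as $n\to\infty$. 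Its contribution is at most $\operatorname{osc}\cdot C\varepsilon^{2/D}\mu(\mathbb R^d)$, using that $\sigma_m$ controls $\|z\|^2\le\sigma_m^{-1}\|z\|_A^2$.) Dropping the indicator of $x'\in I$ only enlarges the integral, so the block's transport term is bounded by
\[
a^2\int_I\int_{\mathbb R^d}\|z\|^2_A\,\xi(z)\,dz\,dx
\]
plus the error terms. The inner integral is explicit via \Cref{lemma:integrals}. Indeed, $\frac1p\|z\|_A^2=c-(c-\frac1p\|z\|^2_A)$ gives
\[
\int\|z\|^2_A\xi=p\Bigl(c\,\tfrac1a-\varepsilon^{-\frac1{p-1}}\!\int(c-\tfrac1p\|z\|^2_A)_+^{q}\Bigr),
\]
and the beta identity used in \Cref{lem:lower-bound-lebesgue-diff} evaluates the last integral as $\frac{2p}{d(p-1)+2p}\frac{c}{a}$.

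\emph{Penalty term.} On $I\times I$ the density of $\pi_{j,k,n}^\varepsilon$ with respect to $\mu\otimes\mu$ is $a^2\xi(x-x')/(f_\mu(x)f_\mu(x'))$, and $f_\mu\ge a$ there. Hence
\[
\Bigl\|\tfrac{d\pi_{j,k,n}^\varepsilon}{d(\mu\otimes\mu)}\Bigr\|_p^p\le a^{2p}\!\int_I\!\int\xi^p\,a^{2-2p}=a^2|I|\int\xi^p .
\]
Since $\varepsilon\xi^p=\varepsilon^{-\frac1{p-1}}(c-\frac1p\|z\|_A^2)_+^{q}$, the lemma again gives an explicit value proportional to $c/a$.

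\emph{Combining and passing to the limit.} Adding the two terms, using $c=a^{-\frac{2(p-1)}{D}}\bigl(\sqrt{\det A}/\mathfrak B_{d,p}\bigr)^{\frac{2(p-1)}D}\varepsilon^{2/D}$, and repeating the algebra of \Cref{thm:lower_bound}, each block contributes at most
\[
(1+\delta')\,\varepsilon^{2/D}\,\mathfrak C_{d,p}\,|I|\,a^{1-\frac{2(p-1)}{D}}(\det A)^{\frac{p-1}D}\,\mathfrak B_{d,p}^{-\frac{2(p-1)}D}\cdot\kappa
\]
up to $o(1)$ plus the oscillation error. Here $\kappa$ is the combination of the two profile constants; I will check that the transport part with weight $p$ and the penalty part sum exactly to $\mathfrak C_{d,p}\mathfrak B^{2(p-1)/D}$, which is the same arithmetic as in \Cref{thm:lower_bound}. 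The $(1+\delta')$ factor multiplies only the penalty term, so the bound with $(1+\delta')$ in front of the total is legitimate. Taking $\limsup_{\varepsilon}$ (finitely many blocks) yields
\[
\mathfrak C_{d,p}\mathfrak B^{-\frac{2(p-1)}D}\int f_n^{\,s}\,(\det A_n)^{\frac{p-1}D},
\]
where $A_n$ is piecewise constant and satisfies $\det A_n\le \det\nabla^2\varphi+\frac1n$ by \eqref{eq:defx_j-k-n}. Using $f_n\le f_\mu$, $s\ge0$, $(\det\nabla^2\varphi)^{\frac{p-1}{D}}=(f_\mu/f_\nu\circ T)^{\frac{p-1}D}$, and the bound $\sigma_M^d$ for dominated convergence, the expression converges, as $n\to\infty$, to at most $\int f_\mu^{s}(f_\mu/f_\nu\circ T)^{\frac{p-1}D}dx$. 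This equals $\int(f_\mu f_\nu\circ T)^{-\frac{p-1}D}d\mu=\mathcal K$, since $s+\frac{p-1}D=1-\frac{p-1}D$. Dominated convergence applies because $f_\mu^s\in L^1$.

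I expect the main obstacle to be the uniformity of the Hessian-freezing and Bregman errors. Both must be controlled by $C\varepsilon^{2/D}$ times a factor that is $o(1)$ as $\varepsilon\to0$ and then as $n\to\infty$. The key points are the uniform support radius $R$ and the mass bound $\sum a^2|I|\int\xi\le\mu(\mathbb R^d)$.
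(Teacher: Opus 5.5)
Your block computation is correct and matches the paper: the penalty bound via $f_\mu\ge k2^{-n}$ on $I_{j,k,n}$, the two Barenblatt integrals, and the passage $n\to\infty$ via $\det A_{j,k,n}\le\det\nabla^2\varphi+\frac1n$, $(k2^{-n})^s\le f_\mu^s$ and $f_\mu^s\in L^1$. The gap is in the step you flag as the main obstacle, the freezing of the Hessian, and your justification fails there for two reasons.

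\emph{First}, \ref{assumption:elliptic_phi} makes $\nabla^2\varphi$ continuous and bounded, but not uniformly continuous on $\mathbb R^d$. The grid $(\mathfrak Q_{j,n})_{j\in J_n}$ fills the growing box $[-n,n]^d$, and the modulus of continuity on $[-n-1,n+1]^d$ depends on $n$. Hence $\sup_{j\in J_n}\operatorname{osc}_{\mathfrak Q_{j,n}}\nabla^2\varphi$ need not tend to $0$. For example, in $d=1$ take $\varphi''=2+\sin(e^{x^2})$, any $\mu$ as in \Cref{theorem:main-p}(i), and $\nu:=(\varphi')_\#\mu$. For all large $n$ the oscillation equals $2$ on the cells near $x=n$.

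\emph{Second}, the bound $\operatorname{osc}\cdot C\varepsilon^{2/D}\mu(\mathbb R^d)$ hides an $n$-dependence. The support radius of $\xi^\varepsilon_{j,k,n}$ is of order $(2^n/k)^{\alpha}\varepsilon^{1/D}$. So the transport cost per unit mass in a cell at level $a=k2^{-n}$ is of order $a^{-2\alpha}\varepsilon^{2/D}$, which is unbounded in $n$ because $a$ goes down to $2^{-n}$. A uniform radius plus a total-mass bound therefore does not give an error that vanishes as $n\to\infty$.

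What is missing is to measure the freezing error relative to the main term and to localize in space; this is the role of $J_{n,R}$ in the paper. For $x\in I_{j,k,n}$ you have $\|z\|^2_{\nabla^2\varphi(x)}\le(1+\sigma_m(\varphi)^{-1}\operatorname{osc}_{\mathfrak Q_{j,n}}\nabla^2\varphi)\|z\|^2_{A_{j,k,n}}$. Hence the error of block $(j,k)$ is at most $C\,\operatorname{osc}_{\mathfrak Q_{j,n}}(\nabla^2\varphi)\,|I_{j,k,n}|(k2^{-n})^s\varepsilon^{2/D}$, and the total error is at most $C\varepsilon^{2/D}\sum_j\operatorname{osc}_{\mathfrak Q_{j,n}}(\nabla^2\varphi)\int_{\mathfrak Q_{j,n}}f_\mu^s\,dx$.

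Now split the cubes. For cubes inside $\ball(0,R)$, the oscillation is controlled by the modulus of $\nabla^2\varphi$ on $\overline{\ball(0,R)}$ at scale $\sqrt d\,2^{-n}$, which tends to $0$. For the remaining cubes, use only $\operatorname{osc}\le 2\sigma_M(\varphi)$; this gives at most $C\int_{\mathbb R^d\setminus\ball(0,R-\sqrt d\,2^{-n})}f_\mu^s\,dx$. Letting $n\to\infty$ and then $R\to\infty$ closes the argument. The paper does the same, bounding $2\mathbb D(x,\nabla\varphi(x'))\le(1+\omega_{n,R})\|x-x'\|^2_{A_{j,k,n}}$ directly for $x,x'$ in a common cell, so it needs no $\varepsilon\to0$ for the Bregman error.

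Finally, fix your constants. $\mathfrak C_{d,p}$ already contains $\mathfrak B_{d,p}^{-2(p-1)/D}$, and the two profile integrals add up to $\frac{p(d(p-1)+2)}{d(p-1)+2p}\,\frac{c}{a}$. So the frozen main term of each block is exactly $\mathfrak C_{d,p}|I|a^s(\det A)^\alpha\varepsilon^{2/D}$. As written, your displayed block bound carries an extra factor of $\mathfrak C_{d,p}$, and your final expression carries an extra factor of $\mathfrak B_{d,p}^{-2(p-1)/D}$.
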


\begin{proof}
Fix $R>0$, and let $J_{n,R}$ be the set of $j\in J_n$ such that
${\mathfrak Q}_{j,n}\subset\ball(0,R)$. By \Cref{lemma:Bregman_ptwisebd}, the
uniform ellipticity, and the uniform continuity of $\nabla^2\varphi$ on
$\overline{\ball(0,R)}$, there is $\omega_{n,R}\downarrow0$ (with $R$ fixed), such that
\[
2\mathbb D(x,\nabla\varphi(x'))
\le
(1+\omega_{n,R})\|x-x'\|_{A_{j,k,n}}^2,
\qquad
x,x'\in I_{j,k,n},\quad j\in J_{n,R}.
\]
For $k\ge1$ and $x,x'\in I_{j,k,n}$,
\[
\frac{d\pi_{j,k,n}^\varepsilon}{d(\mu\otimes\mu)}(x,x')
=
\frac{(k^2/4^n)\xi_{j,k,n}^\varepsilon(x-x')}{f_\mu(x)f_\mu(x')},
\]
and $f_\mu\ge k2^{-n}$ on $I_{j,k,n}$. Therefore
\[
\left\|\frac{d\pi_{j,k,n}^\varepsilon}{d(\mu\otimes\mu)}\right\|_{L^p(\mu\otimes\mu)}^p
\le
\frac{k^2}{4^n}
\int_{I_{j,k,n}}\int_{\mathbb R^d}
\left(\xi_{j,k,n}^\varepsilon(x-x')\right)^p\,dx'\,dx .
\]
It follows that, for $j\in J_{n,R}$,
\[
\begin{aligned}
\mathcal T_{j,k,n}^\varepsilon
&:=
2\int\mathbb D(x,\nabla\varphi(x'))\,d\pi_{j,k,n}^\varepsilon
+
(1+\delta')\varepsilon
\left\|\frac{d\pi_{j,k,n}^\varepsilon}{d(\mu\otimes\mu)}\right\|_{L^p(\mu\otimes\mu)}^p
\\
&\le
(1+\omega_{n,R}+\delta')|I_{j,k,n}|\frac{k^2}{4^n}
\frac1{\varepsilon^{1/(p-1)}}
\int_{\mathbb R^d}
\left[
\|z\|_{A_{j,k,n}}^2
\left(c_{j,k,n}^\varepsilon-\frac1p\|z\|_{A_{j,k,n}}^2\right)_+^{\frac1{p-1}}
\right.
\\[-.35em]
&\hspace{20em}\left.
+
\left(c_{j,k,n}^\varepsilon-\frac1p\|z\|_{A_{j,k,n}}^2\right)_+^{\frac p{p-1}}
\right]dz .
\end{aligned}
\]
The same beta-function computation as in \Cref{lemma:integrals} gives
\[
\begin{aligned}
&\frac1{\varepsilon^{1/(p-1)}}
\int_{\mathbb R^d}
\left[
\|z\|_{A_{j,k,n}}^2
\left(c_{j,k,n}^\varepsilon-\frac1p\|z\|_{A_{j,k,n}}^2\right)_+^{\frac1{p-1}}
+
\left(c_{j,k,n}^\varepsilon-\frac1p\|z\|_{A_{j,k,n}}^2\right)_+^{\frac p{p-1}}
\right]dz
\\
&\hspace{8em}=
\mathfrak C_{d,p}
\left(\frac{2^n}{k}\right)^{\frac{d(p-1)+2p}{D}}
\varepsilon^{2/D}
(\det A_{j,k,n})^\alpha .
\end{aligned}
\]
Consequently, for $j\in J_{n,R}$,
\begin{equation}
    \label{eq:leading-cell-small-p}
\mathcal T_{j,k,n}^\varepsilon
\le
(1+\omega_{n,R}+\delta')\mathfrak C_{d,p}
|I_{j,k,n}|
\left(\frac{2^n}{k}\right)^{-s}
\varepsilon^{2/D}
(\det A_{j,k,n})^\alpha .
\end{equation}
For $j\notin J_{n,R}$, the same estimate, using the global quadratic bound in
\Cref{lemma:Bregman_ptwisebd} and the ellipticity of $A_{j,k,n}$, gives
\begin{equation}
    \label{eq:leading-tail-cell-small-p}
\mathcal T_{j,k,n}^\varepsilon
\le
C|I_{j,k,n}|
\left(\frac{2^n}{k}\right)^{-s}
\varepsilon^{2/D}.
\end{equation}

By \eqref{eq:MA-density-identity}, together with
\eqref{eq:defx_j-k-n}, for a.e. $x\in I_{j,k,n}$,
\[
\det A_{j,k,n}
\le
\frac{f_\mu(x)}{f_\nu(\nabla\varphi(x))}+\frac1n.
\]
Since $(2^n/k)^{-s}=(k2^{-n})^s\le f_\mu^s$ on $I_{j,k,n}$,
\[
\begin{aligned}
S_n
&:=
\sum_{j\in J_n}\sum_{k=1}^{n2^n}
|I_{j,k,n}|
\left(\frac{2^n}{k}\right)^{-s}
(\det A_{j,k,n})^\alpha
\\
&\le
\int_{\{f_\mu>0\}}
 f_\mu^s
\left(\frac{f_\mu}{f_\nu(\nabla\varphi)}+\frac1n\right)^\alpha dx
\\
&\le
\mathcal K(\mu,\nu,p)+n^{-\alpha}\int_{\mathbb R^d}f_\mu^s\,dx,
\end{aligned}
\]
where we used $(a+b)^\alpha\le a^\alpha+b^\alpha$ and the definition of
$\mathcal K(\mu,\nu,p)$. Moreover, if $j\notin J_{n,R}$, then
$I_{j,k,n}\subset\mathbb R^d\setminus\ball(0,R-\sqrt d\,2^{-n})$, and therefore
\[
\sum_{j\notin J_{n,R}}\sum_{k=1}^{n2^n}
|I_{j,k,n}|
\left(\frac{2^n}{k}\right)^{-s}
\le
\int_{\mathbb R^d\setminus\ball(0,R-\sqrt d\,2^{-n})}f_\mu^s\,dx .
\]
Combining the last two estimates with
\eqref{eq:leading-cell-small-p} and \eqref{eq:leading-tail-cell-small-p}, then
letting first $n\to\infty$ and afterwards $R\to\infty$, gives the claim.
\end{proof}

\begin{lemma}\label{lemma:first-marginal-convergence}
For every fixed $n$,
\[
q_n^\varepsilon
=
\frac{d(\operatorname{pr}_1)_\#\pi_{n,{\rm lead}}^\varepsilon}{d\cL^d}
=
\frac{d(\operatorname{pr}_2)_\#\pi_{n,{\rm lead}}^\varepsilon}{d\cL^d}
\longrightarrow f_n
\qquad \cL^d\text{-a.e.\ as }\varepsilon\downarrow0.
\]
Consequently, if
\[
m_n^\varepsilon:=\lambda_{n,{\rm rem}}^\varepsilon(\mathbb R^d),
\qquad
M_n^\varepsilon:=\int\|x\|^{2+\beta}\,d\lambda_{n,{\rm rem}}^\varepsilon(x),
\]
then
\[
m_n^\varepsilon\to m_n:=\int(f_\mu-f_n)\,dx,
\qquad
M_n^\varepsilon\to M_n:=\int\|x\|^{2+\beta}(f_\mu-f_n)\,dx .
\]
\end{lemma}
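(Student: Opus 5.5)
The plan is to compute the marginal density of each leading block explicitly and then pass to the limit cell by cell. For fixed $n$ there are only finitely many nonempty cells $I_{j,k,n}$ with $k\ge1$, and their union is $[-n,n]^d$ up to null sets. On $I_{j,k,n}$, the first marginal of $\pi_{n,{\rm lead}}^\varepsilon$ has density
\[
q_n^\varepsilon(x)=\frac{k^2}{4^n}\int_{I_{j,k,n}}\xi_{j,k,n}^\varepsilon(x-x')\,dx'
=\frac{k^2}{4^n}\int_{\mathbb R^d}\xi_{j,k,n}^\varepsilon(z)\mathbf 1_{I_{j,k,n}}(x-z)\,dz .
\]
The second marginal has the same formula, because $\xi$ is even.

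\textbf{Step 1: the kernels form an approximate identity of mass $2^n/k$.} By \Cref{lemma:integrals}, $\int\xi_{j,k,n}^\varepsilon=2^n/k$. The support of $\xi_{j,k,n}^\varepsilon$ is $\{\|z\|_{A}^2\le p\,c^\varepsilon_{j,k,n}\}$. Since $c^\varepsilon_{j,k,n}\asymp\varepsilon^{2/D}\to0$ and $A_{j,k,n}\succeq\sigma_m(\varphi)I$, this support lies in $\ball(0,r_\varepsilon)$ with $r_\varepsilon\to0$. Hence
\[
\Bigl|q_n^\varepsilon(x)-\tfrac{k}{2^n}\Bigr|\le \frac{k^2}{4^n}\int\xi^\varepsilon_{j,k,n}(z)\bigl|\mathbf 1_{I_{j,k,n}}(x-z)-1\bigr|\,dz ,
\]
and the right-hand side is bounded by $\frac{k}{2^n}\sup_{\|z\|\le r_\varepsilon}\bigl|\mathbf 1_{I_{j,k,n}}(x-z)-\mathbf 1_{I_{j,k,n}}(x)\bigr|$ when $x\in I_{j,k,n}$.

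\textbf{Step 2: pass to the limit, which is the main obstacle.} The sets $I_{j,k,n}$ are merely measurable (level sets of $f_\mu$), not open, so the supremum above need not vanish. I would therefore replace the sup bound by an averaged one and invoke the Lebesgue density theorem. Since $\xi^\varepsilon_{j,k,n}\le \|\xi^\varepsilon\|_\infty\mathbf 1_{\ball(0,r_\varepsilon)}$ and $\|\xi^\varepsilon\|_\infty\,|\ball(0,r_\varepsilon)|$ is bounded uniformly in $\varepsilon$, by scaling of the Barenblatt profile, we get
\[
\Bigl|q_n^\varepsilon(x)-\tfrac{k}{2^n}\Bigr|\le C_{n}\,\frac{|\ball(x,r_\varepsilon)\setminus I_{j,k,n}|}{|\ball(x,r_\varepsilon)|},
\]
and this tends to $0$ at every density point $x$ of $I_{j,k,n}$. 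Almost every point of $I_{j,k,n}$ is such a density point. The same argument shows that outside the union of the cells with $k\ge1$, $q_n^\varepsilon\to0$ a.e., using density points of the complement. Here $f_n=0$, so we obtain $q_n^\varepsilon\to f_n$ a.e.

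\textbf{Step 3: convergence of $m_n^\varepsilon$ and $M_n^\varepsilon$.} We have $0\le q_n^\varepsilon\le f_n\le f_\mu$, and $q_n^\varepsilon$ vanishes outside $[-n,n]^d+\ball(0,r_\varepsilon)$, in fact outside $[-n,n]^d$. So dominated convergence applies with dominating functions $f_\mu$ and $\|x\|^{2+\beta}f_\mu$. The latter is integrable by the $2+\beta$ moment assumption; alternatively, boundedness of the support suffices. This gives $\int q_n^\varepsilon\to\int f_n$ and $\int\|x\|^{2+\beta}q_n^\varepsilon\to\int\|x\|^{2+\beta}f_n$. Subtracting from the corresponding integrals of $f_\mu$ yields $m_n^\varepsilon\to m_n$ and $M_n^\varepsilon\to M_n$.
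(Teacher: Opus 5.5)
Your proposal is correct and follows the paper's proof: the block marginal is the convolution of $\mathbf 1_{I_{j,k,n}}$ with a Barenblatt approximate identity bounded by $C\varepsilon^{-d/D}\mathbf 1_{\ball(0,C\varepsilon^{1/D})}$. Lebesgue differentiation, which your density-point estimate makes explicit, then gives a.e.\ convergence on each of the finitely many cells, and dominated convergence with $0\le q_n^\varepsilon\le f_n\le f_\mu$ yields the mass and moment limits. (Minor points: the union of the cells with $k\ge1$ is $[-n,n]^d\cap\{f_\mu\ge 2^{-n}\}$ rather than all of $[-n,n]^d$, and off that union $q_n^\varepsilon$ vanishes identically because of the factor $\mathbf 1_{I_{j,k,n}}(x)$, so no density-point argument is needed there.)
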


\begin{proof}
Fix $n,j,k$. The case $k=0$ or $I_{j,k,n}=\varnothing$ is trivial, so assume
$k\ge1$ and set $I:=I_{j,k,n}$, $A:=A_{j,k,n}$, and
$\gamma:=2^n/k$. With
\[
\eta_\varepsilon(z):=\gamma^{-1}\xi_{j,k,n}^\varepsilon(z),
\]
\Cref{lemma:integrals} gives $\int\eta_\varepsilon=1$. Moreover
$c_{j,k,n}^\varepsilon=O(\varepsilon^{2/D})$, and the ellipticity of $A$ gives
\[
\operatorname{supp}\eta_\varepsilon\subset\ball(0,C\varepsilon^{1/D}),
\qquad
0\le\eta_\varepsilon\le C\varepsilon^{-d/D}
\mathbf 1_{\ball(0,C\varepsilon^{1/D})}.
\]
Thus $(\eta_\varepsilon)$ is an approximate identity. By the Lebesgue
differentiation theorem,
\[
\int\eta_\varepsilon(z)\mathbf 1_I(x-z)\,dz
\to
\mathbf 1_I(x)
\qquad\text{for }\cL^d\text{-a.e.\ }x.
\]
The first marginal density of $\pi_{j,k,n}^\varepsilon$ is
\[
\frac{k^2}{4^n}\mathbf 1_I(x)
\int_I\xi_{j,k,n}^\varepsilon(x-x')\,dx'
=
\frac{k}{2^n}\mathbf 1_I(x)
\int\eta_\varepsilon(z)\mathbf 1_I(x-z)\,dz,
\]
and hence it converges a.e.\ to $(k/2^n)\mathbf 1_I$. The second marginal is
the same by symmetry. Summing over the finitely many pairs $(j,k)$ proves
$q_n^\varepsilon\to f_n$ a.e. Since $0\le q_n^\varepsilon\le f_n\le f_\mu$,
dominated convergence gives the two asserted limits for the residual mass and
moment.
\end{proof}

\begin{proof}[Proof of \Cref{theorem:main-p} for {$p\in (1,2]$}]
It remains to choose the remainder coupling and pass to the limits. Let
$L<\infty$ be such that
\[
2\mathbb D(x,\nabla\varphi(x'))\le L\|x-x'\|^2,
\qquad x,x'\in\mathbb R^d,
\]
which exists by \Cref{lemma:Bregman_ptwisebd}. If
$m_n^\varepsilon=0$, take $\pi_{n,{\rm rem}}^\varepsilon=0$. Otherwise, apply
the construction from the proof of \Cref{lemma:quant-p-small} to
$\lambda_{n,{\rm rem}}^\varepsilon$, with regularization parameter comparable
to $\varepsilon$, and optimize over the number of cells. This gives
\begin{equation}
    \label{eq:bound-remainders-small-p-1}
\mathcal R_{\varepsilon,n}
\le
C_{\delta'}
\left[
(M_n^\varepsilon+m_n^\varepsilon)^{1-2/D}
(m_n^\varepsilon)^{\frac{2(2-p)}{D}}
+
\varepsilon^{1-2/D}(m_n^\varepsilon)^{2-p}
\right],
\end{equation}
with the right-hand side read as zero when $m_n^\varepsilon=0$.

By \Cref{lemma:first-marginal-convergence}, for fixed $n$,
$m_n^\varepsilon\to m_n$ and $M_n^\varepsilon\to M_n$. Since
$0\le m_n^\varepsilon\le1$ and $1-2/D>0$, \eqref{eq:bound-remainders-small-p-1}
yields
\[
\limsup_{\varepsilon\downarrow0}\mathcal R_{\varepsilon,n}
\le
C_{\delta'}(M_n+m_n)^{1-2/D}.
\]
As $f_n\uparrow f_\mu$ a.e.\ and $\mu\in\mathcal P_{2+\beta}(\mathbb R^d)$,
we have $m_n\to0$ and $M_n\to0$. Therefore
\begin{equation}
    \label{eq:bound-remainders-small-p-2}
\limsup_{n\uparrow\infty}\limsup_{\varepsilon\downarrow0}
\mathcal R_{\varepsilon,n}=0.
\end{equation}
Combining \eqref{eq:upper-split-small-p}, \Cref{lemma:control-leading}, and
\eqref{eq:bound-remainders-small-p-2}, we obtain
\[
\limsup_{\varepsilon\downarrow0}
\frac{{\rm ROT}_{\varepsilon,p}(\mu,\nu)-{\rm OT}(\mu,\nu)}
{\varepsilon^{2/D}}
\le
(1+\delta')\mathfrak C_{d,p}\mathcal K(\mu,\nu,p).
\]
Letting $\delta'\downarrow0$ completes the proof of the upper bound for
$p\in(1,2]$, and hence, together with the lower bound, the proof of
\Cref{theorem:main-p} in this case.
\end{proof}

\subsection{Proof of \Cref{theorem:main-p} for \(p>2\)}

\begin{proof}[Proof of \Cref{theorem:main-p} for \(p>2\)]
It remains to prove the upper bound. Assume
\cref{assumption:hoelder_densities}, \cref{assumption:elliptic_phi} and \cref{assumption:cone-2}, \cref{assumption:hoelder_densities-2}. Set
\[
D:=d(p-1)+2,
\qquad
\alpha:=\frac{p-1}{D},
\qquad
\kappa:=1+2\alpha=\frac{d(p-1)+2p}{D}.
\]
Under the present assumptions there are constants
\[
0<\underline f_\mu\le f_\mu(x)\le \overline f_\mu<\infty,
\qquad x\in \operatorname{supp}\mu .
\]
Moreover, by \eqref{eq:MA-density-identity} and since
\(|\partial\Omega_0|=0\),
\begin{equation}
    \label{eq:K-large-p-det-form}
\mathcal K(\mu,\nu,p)
=
\int_{\Omega_0}
f_\mu(x)^{1-2\alpha}
\bigl(\det\nabla^2\varphi(x)\bigr)^\alpha\,dx .
\end{equation}

Fix \(\theta\in(0,1)\) and \(\delta'>0\). Let
\(\mathcal P=\{I_1,\ldots,I_N\}\) be a finite family of pairwise disjoint
axis-parallel cubes compactly contained in \(\Omega_0\). We shall later take
families with \(\operatorname{mesh}(\mathcal P):=\max_i\operatorname{diam}(I_i)\)
tending to zero and with \(\bigcup_i I_i\) exhausting \(\Omega_0\). Since
\(f_\mu\) is uniformly continuous and bounded away from zero, we may restrict to families fine enough that
\begin{equation}
    \label{eq:oscillation-large-p}
\sup_{x,y\in I_i}|f_\mu(x)-f_\mu(y)|
\le \theta f_i,
\qquad
f_i:=\inf_{I_i}f_\mu,
\qquad i=1,\ldots,N .
\end{equation}
Choose \(x_i\in I_i\), and set \(A_i:=\nabla^2\varphi(x_i)\). For
\(0<\delta<\frac12\min_i\operatorname{side}(I_i)\), define
\[
I_i^\delta
:=
\{x\in I_i:\operatorname{dist}(x,\partial I_i)>\delta\}.
\]
Let
\[
c_i^{\varepsilon,\theta}
:=
c_{\varepsilon,(1-\theta)f_i^{-1},A_i},
\]
where \(c_{\varepsilon,\gamma,A}\) is the constant from
\Cref{lemma:integrals}, and define
\[
\xi_i^{\varepsilon,\theta}(z)
:=
\varepsilon^{-\frac1{p-1}}
\left(
c_i^{\varepsilon,\theta}
-
\frac1p\|z\|_{A_i}^2
\right)_+^{\frac1{p-1}} .
\]
Thus
\[
\int_{\mathbb R^d}\xi_i^{\varepsilon,\theta}(z)\,dz
=
(1-\theta)f_i^{-1}.
\]
For \(\varepsilon\) sufficiently small (depending on
\(\mathcal P,\delta,\theta\)), the support of
\(\xi_i^{\varepsilon,\theta}\) is contained in \(\ball(0,\delta)\). Define
\[
\pi_i^{\varepsilon,\theta}
:=
f_i^2
\xi_i^{\varepsilon,\theta}(x-x')
\mathbf 1_{(I_i^\delta\times I_i)\cup(I_i\times I_i^\delta)}(x,x')
\,dx\,dx',
\qquad
\pi_{\rm lead}^{\varepsilon,\theta}
:=
\sum_{i=1}^N\pi_i^{\varepsilon,\theta}.
\]
The measures \(\pi_i^{\varepsilon,\theta}\) are symmetric. Moreover, for
\(x\in I_i^\delta\),
\[
\frac{d(\operatorname{pr}_1)_\#\pi_i^{\varepsilon,\theta}}{d\cL^d}(x)
=
(1-\theta)f_i,
\]
whereas, for \(x\in I_i\),
\[
0\le
\frac{d(\operatorname{pr}_1)_\#\pi_i^{\varepsilon,\theta}}{d\cL^d}(x)
\le
(1-\theta)f_i
\le f_\mu(x).
\]
Since the cubes are disjoint up to null sets,
\[
(\operatorname{pr}_1)_\#\pi_{\rm lead}^{\varepsilon,\theta}
=
(\operatorname{pr}_2)_\#\pi_{\rm lead}^{\varepsilon,\theta}
\le \mu .
\]
Set
\[
\lambda_{\rm rem}^{\varepsilon,\theta}
:=
\mu-(\operatorname{pr}_1)_\#\pi_{\rm lead}^{\varepsilon,\theta}
=
\mu-(\operatorname{pr}_2)_\#\pi_{\rm lead}^{\varepsilon,\theta}.
\]

We next complete the remaining marginal mass. Let
\[
B:=\bigcup_{i=1}^N I_i^\delta,
\qquad
G:=\Omega_0\setminus B.
\]
Since \(|\partial\Omega_0|=0\), the sets \(B\) and \(G\) carry all of
\(\lambda_{\rm rem}^{\varepsilon,\theta}\). On \(I_i^\delta\), the density of
\(\lambda_{\rm rem}^{\varepsilon,\theta}|_B\) is independent of
\(\varepsilon\), for all sufficiently small \(\varepsilon\), and equals
\[
\rho_i^\theta(x):=f_\mu(x)-(1-\theta)f_i.
\]
By \eqref{eq:oscillation-large-p},
\[
\theta\underline f_\mu
\le
\theta f_i
\le
\rho_i^\theta(x)
\le
2\theta f_i
\le
2\theta\overline f_\mu,
\qquad x\in I_i^\delta .
\]
Applying \Cref{lemma:weighted-quantization-large-p} on each \(I_i^\delta\),
with reference measure \(f_\mu\,d\cL^d\), gives a coupling
\[
\pi_{\rm bulk}^{\varepsilon,\theta}
\in
\Pi(\lambda_{\rm rem}^{\varepsilon,\theta}|_B,
    \lambda_{\rm rem}^{\varepsilon,\theta}|_B)
\]
such that
\begin{equation}
    \label{eq:bulk-large-p-final}
\int\|x-x'\|^2\,d\pi_{\rm bulk}^{\varepsilon,\theta}
+
\varepsilon
\int
\left(
\frac{d\pi_{\rm bulk}^{\varepsilon,\theta}}{d(\mu\otimes\mu)}
\right)^p
d\mu d\mu
\le
C \theta^\kappa \varepsilon^{2/D},
\end{equation}
where \(C\) is independent of \(\theta,\mathcal P,\delta\), and
\(\varepsilon\).

Let \(P_G:=\mu|_G\) and
\(\lambda_G^{\varepsilon,\theta}:=\lambda_{\rm rem}^{\varepsilon,\theta}|_G\).
The density of \(P_G\) is \(f_\mu\), hence is bounded above and below on \(G\).
The density of \(\lambda_G^{\varepsilon,\theta}\) is bounded above by
\(\overline f_\mu\) and below by \(\theta\underline f_\mu\). Indeed, on
\(G\cap I_i\) the leading marginal density is at most \((1-\theta)f_i\), so
\[
\frac{d\lambda_G^{\varepsilon,\theta}}{d\cL^d}
\ge
f_\mu-(1-\theta)f_i
\ge
\theta f_i
\ge
\theta\underline f_\mu,
\]
whereas outside \(\bigcup_i I_i\) no leading mass is used. Therefore, for
fixed \(\theta\), \Cref{le:boundary-layer-large-p}, applied with
\(Q=\Omega_0\), \(P=P_G\), and
\(\lambda=\lambda_G^{\varepsilon,\theta}\), gives a coupling
\[
\pi_{\rm bdr}^{\varepsilon,\theta}
\in
\Pi(\lambda_{\rm rem}^{\varepsilon,\theta}|_G,
    \lambda_{\rm rem}^{\varepsilon,\theta}|_G)
\]
such that
\begin{equation}
    \label{eq:boundary-large-p-final}
\int\|x-x'\|^2\,d\pi_{\rm bdr}^{\varepsilon,\theta}
+
\varepsilon
\int
\left(
\frac{d\pi_{\rm bdr}^{\varepsilon,\theta}}{d(\mu\otimes\mu)}
\right)^p
d\mu d\mu
\le
C_\theta |G|\,\varepsilon^{2/D}.
\end{equation}
The constant \(C_\theta\) may depend on \(\theta\), but is independent of
\(\mathcal P,\delta\), and \(\varepsilon\).

Set
\[
\pi_{\rm rem}^{\varepsilon,\theta}
:=
\pi_{\rm bulk}^{\varepsilon,\theta}
+
\pi_{\rm bdr}^{\varepsilon,\theta},
\qquad
\pi_\varepsilon^\theta
:=
\pi_{\rm lead}^{\varepsilon,\theta}
+
\pi_{\rm rem}^{\varepsilon,\theta},
\]
and
\[
\gamma_\varepsilon^\theta
:=
(\operatorname{Id}\times\nabla\varphi)_\#\pi_\varepsilon^\theta .
\]
Then \(\pi_\varepsilon^\theta\in\Pi(\mu,\mu)\) and
\(\gamma_\varepsilon^\theta\in\Pi(\mu,\nu)\). By
\Cref{lemma:MA-change-of-variables}, applied to
\(\lambda=\pi_\varepsilon^\theta\), the \(L^p\)-penalty is invariant under
\((x,x')\mapsto(x,\nabla\varphi(x'))\). Hence
\[
\left\|
\frac{d\gamma_\varepsilon^\theta}{d(\mu\otimes\nu)}
\right\|_{L^p(\mu\otimes\nu)}^p
=
\left\|
\frac{d\pi_\varepsilon^\theta}{d(\mu\otimes\mu)}
\right\|_{L^p(\mu\otimes\mu)}^p .
\]
The leading densities are supported on the pairwise disjoint sets
\(I_i\times I_i\), up to null sets; hence their \(L^p\)-contributions add.
The bulk and boundary remainder densities are supported on the disjoint sets
\(B\times B\) and \(G\times G\). They need not, however, be disjoint from the
leading density. Applying
\((a+b)^p\le (1+\delta')a^p+C_{p,\delta'}b^p\), with \(a\) equal to the total
leading density and \(b\) equal to the total remainder density, bounds the
regularization term of \(\pi_\varepsilon^\theta\) by the leading contribution,
with factor \(1+\delta'\), plus a constant multiple of the two remainder
regularization terms. Also, by \Cref{lemma:integrated-bregman-identity},
\[
\int\|x-y\|^2\,d\gamma_\varepsilon^\theta(x,y)-{\rm OT}(\mu,\nu)
=
2\int\mathbb D(x,\nabla\varphi(x'))\,d\pi_\varepsilon^\theta(x,x'),
\]
and \Cref{lemma:Bregman_ptwisebd} gives
\(2\mathbb D(x,\nabla\varphi(x'))\le C\|x-x'\|^2\). Consequently,
\begin{equation}
    \label{eq:large-p-upper-split-final}
\frac{{\rm ROT}_{\varepsilon,p}(\mu,\nu)-{\rm OT}(\mu,\nu)}
{\varepsilon^{2/D}}
\le
\varepsilon^{-2/D}\mathcal T_{\varepsilon}^{\theta,\delta'}
+
C_{\delta'}\theta^\kappa
+
C_{\theta,\delta'}|G|,
\end{equation}
where
\[
\mathcal T_{\varepsilon}^{\theta,\delta'}
:=
\sum_{i=1}^N
\left[
2\int \mathbb D(x,\nabla\varphi(x'))\,d\pi_i^{\varepsilon,\theta}
+
(1+\delta')\varepsilon
\left\|
\frac{d\pi_i^{\varepsilon,\theta}}{d(\mu\otimes\mu)}
\right\|_{L^p(\mu\otimes\mu)}^p
\right].
\]

It remains to estimate \(\mathcal T_{\varepsilon}^{\theta,\delta'}\). Let
\(r_\varepsilon\) be the maximal radius of the supports of the profiles
\(\xi_i^{\varepsilon,\theta}\). For fixed \(\mathcal P,\delta,\theta\), we
have \(r_\varepsilon\to0\). By \Cref{lemma:Bregman_ptwisebd}, the uniform
ellipticity, and the uniform continuity of \(\nabla^2\varphi\), there are
numbers \(\eta_{\mathcal P}\to0\) as
\(\operatorname{mesh}(\mathcal P)\to0\) and \(\omega(r_\varepsilon)\to0\) as
\(\varepsilon\downarrow0\), such that, on the support of
\(\pi_i^{\varepsilon,\theta}\),
\[
2\mathbb D(x,\nabla\varphi(x'))
\le
\bigl(1+\eta_{\mathcal P}+\omega(r_\varepsilon)\bigr)
\|x-x'\|_{A_i}^2 .
\]
Moreover, since \(f_\mu\ge f_i\) on \(I_i\),
\[
\left\|
\frac{d\pi_i^{\varepsilon,\theta}}{d(\mu\otimes\mu)}
\right\|_{L^p(\mu\otimes\mu)}^p
\le
f_i^2
\int_{I_i}\int_{\mathbb R^d}
\bigl(\xi_i^{\varepsilon,\theta}(x-x')\bigr)^p\,dx'\,dx .
\]
Thus,
\[
\begin{aligned}
&2\int \mathbb D(x,\nabla\varphi(x'))\,d\pi_i^{\varepsilon,\theta}
+
(1+\delta')\varepsilon
\left\|
\frac{d\pi_i^{\varepsilon,\theta}}{d(\mu\otimes\mu)}
\right\|_{L^p(\mu\otimes\mu)}^p
\\
&\quad\le
\bigl(1+\delta'+\eta_{\mathcal P}+\omega(r_\varepsilon)\bigr)
|I_i|f_i^2
\frac1{\varepsilon^{1/(p-1)}}
\int_{\mathbb R^d}
\left[
\|z\|_{A_i}^2
\left(c_i^{\varepsilon,\theta}-\frac1p\|z\|_{A_i}^2\right)_+^{\frac1{p-1}}
\right.
\\[-.4em]
&\hspace{13em}\left.
+
\left(c_i^{\varepsilon,\theta}-\frac1p\|z\|_{A_i}^2\right)_+^{\frac p{p-1}}
\right]dz .
\end{aligned}
\]
The same beta-function computation as in \Cref{lemma:integrals} gives
\[
\begin{aligned}
&\frac1{\varepsilon^{1/(p-1)}}
\int_{\mathbb R^d}
\left[
\|z\|_{A_i}^2
\left(c_i^{\varepsilon,\theta}-\frac1p\|z\|_{A_i}^2\right)_+^{\frac1{p-1}}
+
\left(c_i^{\varepsilon,\theta}-\frac1p\|z\|_{A_i}^2\right)_+^{\frac p{p-1}}
\right]dz
\\
&\hspace{8em}
=
\mathfrak C_{d,p}
\bigl((1-\theta)f_i^{-1}\bigr)^\kappa
\varepsilon^{2/D}
(\det A_i)^\alpha .
\end{aligned}
\]
Therefore,
\begin{equation}
    \label{eq:leading-large-p-final}
\limsup_{\varepsilon\downarrow0}
\varepsilon^{-2/D}\mathcal T_{\varepsilon}^{\theta,\delta'}
\le
(1+\delta'+\eta_{\mathcal P})
(1-\theta)^\kappa
\mathfrak C_{d,p}
S_{\mathcal P},
\end{equation}
where
\[
S_{\mathcal P}
:=
\sum_{i=1}^N
|I_i|\,f_i^{1-2\alpha}(\det A_i)^\alpha .
\]

Since \(f_\mu\) is continuous and bounded away from zero on
\(\operatorname{supp}\mu\), and \(\nabla^2\varphi\) is continuous, the sums
\(S_{\mathcal P}\) converge, along any exhausting sequence of such cubical
families with \(\operatorname{mesh}(\mathcal P)\to0\), to the integral in
\eqref{eq:K-large-p-det-form}. Also,
\[
|G|
=
\left|\Omega_0\setminus\bigcup_{i=1}^N I_i^\delta\right|
\longrightarrow
\left|\Omega_0\setminus\bigcup_{i=1}^N I_i\right|
\qquad\text{as }\delta\downarrow0,
\]
and the latter tends to \(0\) along the same exhaustion of \(\Omega_0\).
Combining \eqref{eq:large-p-upper-split-final} and
\eqref{eq:leading-large-p-final}, then letting first \(\delta\downarrow0\) and
afterwards refining the cubical family, yields, for fixed
\(\theta\in(0,1)\) and \(\delta'>0\),
\[
\limsup_{\varepsilon\downarrow0}
\frac{{\rm ROT}_{\varepsilon,p}(\mu,\nu)-{\rm OT}(\mu,\nu)}
{\varepsilon^{2/D}}
\le
(1+\delta')
(1-\theta)^\kappa
\mathfrak C_{d,p}\mathcal K(\mu,\nu,p)
+
C_{\delta'}\theta^\kappa .
\]
Finally let \(\theta\downarrow0\) and then \(\delta'\downarrow0\). This gives
\[
\limsup_{\varepsilon\downarrow0}
\frac{{\rm ROT}_{\varepsilon,p}(\mu,\nu)-{\rm OT}(\mu,\nu)}
{\varepsilon^{2/(d(p-1)+2)}}
\le
\mathfrak C_{d,p}\mathcal K(\mu,\nu,p).
\]
Together with the lower bound in \Cref{thm:lower_bound}, this proves
\Cref{theorem:main-p} for \(p>2\).
\end{proof}

\subsection{Proof of \Cref{theorem:main-EOT} on EOT}

\begin{proof}[Proof of \Cref{theorem:main-EOT}]
It remains to prove the upper bound. Assume
\ref{assumption:hoelder_densities}, \ref{assumption:elliptic_phi}, and \(\mu,\nu\in\mathcal P_{2+\beta}(\mathbb R^d)\), and set \(T:=\nabla\varphi\). For each \(R\ge1\), choose finite nonnegative simple functions
\[
    f_{n,R}
    =
    \sum_{i\in\mathcal I_{n,R}}
    a_{n,i,R}\mathbf 1_{E_{n,i,R}},
\]
where \(\mathcal I_{n,R}\) is finite and 
\(\{E_{n,i,R}\}_{i\in\mathcal I_{n,R}}\) are disjoint Borel sets such that
\[
    0\le f_{n,R}\le f_{n+1,R}\le f_\mu,
    \qquad
    f_{n,R}\uparrow f_\mu\mathbf 1_{[-R,R]^d}
    \quad
    \text{\(\cL^d\)-a.e.\ and in }L^1(\mathbb R^d),
\]
and
\[
    \delta_{n,R}
    :=
    \sup_{i\in\mathcal I_{n,R}}
    \operatorname{diam}(E_{n,i,R})
    \to0,
    \qquad
    \bigcup_{i\in\mathcal I_{n,R}}E_{n,i,R}
    \subset[-R,R]^d .
\]
We discard indices with \(a_{n,i,R}=0\) or \(|E_{n,i,R}|=0\). By monotone
convergence and the finite \((2+\beta)\)-moment assumption,
\[
    \lim_{R\uparrow\infty}\lim_{n\uparrow\infty}
    \int \|x\|^{2+\beta}
    \bigl(f_\mu-f_{n,R}\bigr)(x)\,dx
    =0 .
\]

We shall also use the corresponding entropy convergence. Since
\(t\mapsto(t\log t)^+\) is nondecreasing on \([0,\infty)\), monotone
convergence gives
\[
    \int (f_{n,R}\log f_{n,R})^+\,dx
    \longrightarrow
    \int_{[-R,R]^d}(f_\mu\log f_\mu)^+\,dx .
\]
Letting \(R\uparrow\infty\) and using monotone convergence once more,
\[
    \lim_{R\uparrow\infty}\lim_{n\uparrow\infty}
    \int (f_{n,R}\log f_{n,R})^+\,dx
    =
    \int (f_\mu\log f_\mu)^+\,dx .
\]
For the negative part, choose \(s\in(0,1)\) such that
\(f_\mu^s\in L^1(\mathbb R^d)\), which is possible by the moment assumption as
in \Cref{rk:finitenessOfLimit}. Since
\[
    |t\log t|\mathbf 1_{\{0<t\le1\}}
    \le
    \frac{1}{e(1-s)}t^s
\]
and \(0\le f_{n,R}\le f_\mu\), we have
\[
    (f_{n,R}\log f_{n,R})^-
    \le
    \frac{1}{e(1-s)}f_\mu^s .
\]
Thus, for fixed \(R\), dominated convergence yields
\[
    \int (f_{n,R}\log f_{n,R})^-\,dx
    \longrightarrow
    \int_{[-R,R]^d}(f_\mu\log f_\mu)^-\,dx .
\]
Letting \(R\uparrow\infty\) and using dominated convergence again,
\[
    \lim_{R\uparrow\infty}\lim_{n\uparrow\infty}
    \int (f_{n,R}\log f_{n,R})^-\,dx
    =
    \int (f_\mu\log f_\mu)^-\,dx .
\]
Consequently,
\begin{equation}
    \label{eq:KL-conv-R}
    \lim_{R\uparrow\infty}\lim_{n\uparrow\infty}
    \int f_{n,R}\log f_{n,R}\,dx
    =
    \int f_\mu\log f_\mu\,dx
    \in(-\infty,\infty].
\end{equation}

Fix \(R<\infty\) and \(n\in\mathbb N\). To simplify notation, write
\[
    E_i:=E_{n,i,R},
    \qquad
    a_i:=a_{n,i,R},
    \qquad
    \mathcal I:=\mathcal I_{n,R}.
\]
For each \(i\in\mathcal I\), choose \(x_i\in E_i\) and set
\(A_i:=\nabla^2\varphi(x_i)\). Define
\[
    G_i^\varepsilon(z)
    :=
    G_{\varepsilon,a_i,A_i}(z)
    =
    \frac{\sqrt{\det A_i}}{a_i(\pi\varepsilon)^{d/2}}
    \exp\left(-\frac{\|z\|_{A_i}^2}{\varepsilon}\right).
\]
By \Cref{lemma:eot-gaussian-profile},
\[
    \int_{\mathbb R^d}G_i^\varepsilon(z)\,dz=\frac1{a_i}.
\]
Define the leading measure on \(E_i\times E_i\) by
\[
    \pi_i^\varepsilon(dx\,dx')
    :=
    a_i^2
    \mathbf 1_{E_i}(x)\mathbf 1_{E_i}(x')
    G_i^\varepsilon(x-x')\,dx\,dx',
\]
and set
\[
    \pi_{\rm lead}^\varepsilon
    :=
    \sum_{i\in\mathcal I}\pi_i^\varepsilon .
\]
For \(x\in E_i\),
\[
    \frac{d(\operatorname{pr}_1)_\#\pi_i^\varepsilon}{d\cL^d}(x)
    =
    a_i^2\mathbf 1_{E_i}(x)
    \int_{E_i}G_i^\varepsilon(x-x')\,dx'
    \le
    a_i\mathbf 1_{E_i}(x)
    \le
    f_\mu(x)
\]
for \(\cL^d\)-a.e.\ \(x\). The same estimate holds for the second marginal,
by symmetry. Hence
\[
    \lambda_{\rm rem}^\varepsilon
    :=
    \mu-(\operatorname{pr}_1)_\#\pi_{\rm lead}^\varepsilon
    =
    \mu-(\operatorname{pr}_2)_\#\pi_{\rm lead}^\varepsilon
\]
is a finite positive measure.

Choose \(L<\infty\) such that
\[
    2\mathbb D(x,T(x'))
    \le
    L\|x-x'\|^2,
    \qquad x,x'\in\mathbb R^d,
\]
which exists by \Cref{lemma:Bregman_ptwisebd}. If
\(\lambda_{\rm rem}^\varepsilon(\mathbb R^d)=0\), set
\(\pi_{\rm rem}^\varepsilon=0\). Otherwise, for all sufficiently small
\(\varepsilon\), choose
\[
    \pi_{\rm rem}^\varepsilon
    \in
    \Pi(\lambda_{\rm rem}^\varepsilon,\lambda_{\rm rem}^\varepsilon)
\]
according to the construction underlying \Cref{cor:eot-quant-scaled}, with
this value of \(L\) and regularization parameter \(\varepsilon\).

Set
\[
    \pi_{n,R}^\varepsilon
    :=
    \pi_{\rm lead}^\varepsilon+\pi_{\rm rem}^\varepsilon,
    \qquad
    \gamma_{n,R}^\varepsilon
    :=
    (\operatorname{Id}\times T)_\#\pi_{n,R}^\varepsilon .
\]
Then \(\pi_{n,R}^\varepsilon\in\Pi(\mu,\mu)\) and
\(\gamma_{n,R}^\varepsilon\in\Pi(\mu,\nu)\).

We first estimate the leading contribution. Since \(f_{n,R}\le f_\mu\), on
\(E_i\times E_i\) we have
\[
    \frac{d\pi_i^\varepsilon}{d(\mu\otimes\mu)}(x,x')
    =
    \frac{a_i^2G_i^\varepsilon(x-x')}
         {f_\mu(x)f_\mu(x')}
    \le
    G_i^\varepsilon(x-x')
\]
for \((\mu\otimes\mu)\)-a.e.\ \((x,x')\). Therefore
\[
    {\rm KL}(\pi_i^\varepsilon\mid\mu\otimes\mu)
    \le
    \int_{E_i\times E_i}
    \log G_i^\varepsilon(x-x')\,
    d\pi_i^\varepsilon(x,x').
\]

The local Taylor estimate for the Bregman divergence is used on the
compact \([-R,R]^d\). Let \(\omega_R\) be a nondecreasing modulus such
that
\[
    \left|
    2\mathbb D(x,T(x'))
    -
    \|x-x'\|_{\nabla^2\varphi(x)}^2
    \right|
    \le
    \omega_R(\|x-x'\|)\|x-x'\|^2,
    \qquad x,x'\in[-R,R]^d .
\]
Define
\[
    \omega_{n,R}
    :=
    \omega_R(\delta_{n,R})
    +
    \sup_{\substack{u,v\in[-R,R]^d\\ \|u-v\|\le \delta_{n,R}}}
    \|\nabla^2\varphi(u)-\nabla^2\varphi(v)\| .
\]
Since \(\nabla^2\varphi\) is continuous on \([-R,R]^d\),
\[
    \omega_{n,R}\to0
    \qquad\text{as }n\uparrow\infty
    \quad\text{for every fixed }R.
\]
For \(x,x'\in E_i\), this gives
\[
    \left|
    2\mathbb D(x,T(x'))
    -
    \|x-x'\|_{A_i}^2
    \right|
    \le
    \omega_{n,R}\|x-x'\|^2 .
\]
Using the uniform ellipticity of \(A_i\) and the Gaussian second-moment bound,
we obtain
\begin{equation}
    \label{eq:bound-EOT-InBR-R}
    \sum_{i\in\mathcal I}
    \int_{E_i\times E_i}
    \left|
    2\mathbb D(x,T(x'))
    -
    \|x-x'\|_{A_i}^2
    \right|
    d\pi_i^\varepsilon(x,x')
    \le
    C\omega_{n,R}\varepsilon ,
\end{equation}
where \(C\) is independent of \(\varepsilon,n\), and \(R\). Consequently,
\begin{multline*}
    \sum_{i\in\mathcal I}
    \left[
    2\int \mathbb D(x,T(x'))\,d\pi_i^\varepsilon
    +
    \varepsilon\,{\rm KL}(\pi_i^\varepsilon\mid\mu\otimes\mu)
    \right]
    \\
    \le
    \sum_{i\in\mathcal I}
    \int_{E_i\times E_i}
    \left[
    \|x-x'\|_{A_i}^2
    +
    \varepsilon\log G_i^\varepsilon(x-x')
    \right]
    d\pi_i^\varepsilon(x,x')
    +
    C\omega_{n,R}\varepsilon .
\end{multline*}
Since
\[
    \log G_i^\varepsilon(z)
    =
    \frac12\log\det A_i
    -
    \log a_i
    -
    \frac d2\log(\pi\varepsilon)
    -
    \frac{\|z\|_{A_i}^2}{\varepsilon},
\]
we have the pointwise cancellation
\[
    \|z\|_{A_i}^2+\varepsilon\log G_i^\varepsilon(z)
    =
    \varepsilon
    \left[
    \frac12\log\det A_i
    -
    \log a_i
    -
    \frac d2\log(\pi\varepsilon)
    \right].
\]
Writing
\[
    m_i^\varepsilon
    :=
    \pi_i^\varepsilon(\mathbb R^d\times\mathbb R^d),
\]
we therefore obtain
\begin{multline*}
    \sum_{i\in\mathcal I}
    \left[
    2\int \mathbb D(x,T(x'))\,d\pi_i^\varepsilon
    +
    \varepsilon\,{\rm KL}(\pi_i^\varepsilon\mid\mu\otimes\mu)
    \right]
    \\
    \le
    \varepsilon
    \sum_{i\in\mathcal I}
    m_i^\varepsilon
    \left[
    \frac12\log\det A_i
    -
    \log a_i
    -
    \frac d2\log(\pi\varepsilon)
    \right]
    +
    C\omega_{n,R}\varepsilon .
\end{multline*}

We next record the convergence of the leading masses. Define
\[
    \eta_i^\varepsilon(z):=a_iG_i^\varepsilon(z).
\]
Then \((\eta_i^\varepsilon)_{\varepsilon>0}\) is an approximate identity.
The Lebesgue differentiation theorem applied to \(\mathbf 1_{E_i}\) yields
\[
    \eta_i^\varepsilon\ast\mathbf 1_{E_i}(x)
    \longrightarrow
    \mathbf 1_{E_i}(x)
    \qquad
    \text{for \(\cL^d\)-a.e.\ }x .
\]
Since \(0\le\eta_i^\varepsilon\ast\mathbf 1_{E_i}\le1\), dominated
convergence gives
\[
    m_i^\varepsilon
    =
    a_i\int_{E_i}
    (\eta_i^\varepsilon\ast\mathbf 1_{E_i})(x)\,dx
    \longrightarrow
    a_i|E_i|.
\]
Moreover,
\[
    (\operatorname{pr}_1)_\#\pi_{\rm lead}^\varepsilon
    \longrightarrow
    f_{n,R}\,d\cL^d
\]
in total variation. Since the first marginal of the leading part is supported
on \([-R,R]^d\), the same convergence holds against the weight
\(\|x\|^{2+\beta}\).

We now control the remainder. Let
\[
    m_{\rm rem}^\varepsilon
    :=
    \lambda_{\rm rem}^\varepsilon(\mathbb R^d),
    \qquad
    M_{\rm rem}^\varepsilon
    :=
    \int \|x\|^{2+\beta}\,
    d\lambda_{\rm rem}^\varepsilon(x).
\]
Since \(\lambda_{\rm rem}^\varepsilon\le\mu\), we have
\[
    {\rm KL}(\pi_{\rm rem}^\varepsilon\mid\mu\otimes\mu)
    \le
    {\rm KL}
    \bigl(
    \pi_{\rm rem}^\varepsilon
    \mid
    \lambda_{\rm rem}^\varepsilon
    \otimes
    \lambda_{\rm rem}^\varepsilon
    \bigr).
\]
Combining this with \(2\mathbb D(x,T(x'))\le L\|x-x'\|^2\) and
\Cref{cor:eot-quant-scaled}, and absorbing the difference between
\(\log\varepsilon\) and \(\log(\pi\varepsilon)\) into the linear term, we get
\begin{multline*}
    2\int \mathbb D(x,T(x'))\,
    d\pi_{\rm rem}^\varepsilon
    +
    \varepsilon\,
    {\rm KL}(\pi_{\rm rem}^\varepsilon\mid\mu\otimes\mu)
    \\
    \le
    -\frac d2\,m_{\rm rem}^\varepsilon
    \varepsilon\log(\pi\varepsilon)
    +
    C\varepsilon
    \left[
    m_{\rm rem}^\varepsilon
    +
    M_{\rm rem}^\varepsilon
    +
    m_{\rm rem}^\varepsilon|\log m_{\rm rem}^\varepsilon|
    \right].
\end{multline*}

It remains to combine the leading and remainder entropies. We use the
log-sum inequality in the following form: if \(\alpha,\beta\ll P\) are finite
positive measures with masses \(m_\alpha,m_\beta\) and
\(m_\alpha+m_\beta=1\), then
\[
    {\rm KL}(\alpha+\beta\mid P)
    \le
    {\rm KL}(\alpha\mid P)
    +
    {\rm KL}(\beta\mid P)
    +
    \mathfrak h(m_\beta),
\]
where
\[
    \mathfrak h(r)
    :=
    -r\log r-(1-r)\log(1-r),
    \qquad r\in[0,1].
\]
The leading pieces are supported on the pairwise disjoint sets
\(E_i\times E_i\), up to null sets, so
\[
    {\rm KL}(\pi_{\rm lead}^\varepsilon\mid\mu\otimes\mu)
    =
    \sum_{i\in\mathcal I}
    {\rm KL}(\pi_i^\varepsilon\mid\mu\otimes\mu).
\]
Applying the log-sum inequality with $\alpha=\pi_{\rm lead}^\varepsilon$ and $\beta=\pi_{\rm rem}^\varepsilon$ and $P=\mu\otimes\mu$ yields
\[
    {\rm KL}(\pi_{n,R}^\varepsilon\mid\mu\otimes\mu)
    \le
    {\rm KL}(\pi_{\rm lead}^\varepsilon\mid\mu\otimes\mu)
    +
    {\rm KL}(\pi_{\rm rem}^\varepsilon\mid\mu\otimes\mu)
    +
    \mathfrak h(m_{\rm rem}^\varepsilon).
\]

By \Cref{lemma:MA-change-of-variables}, applied to
\(\lambda=\pi_{n,R}^\varepsilon\), relative entropy is invariant under
\((x,x')\mapsto(x,T(x'))\). Hence
\[
    {\rm KL}
    \bigl(
    \gamma_{n,R}^\varepsilon
    \mid
    \mu\otimes\nu
    \bigr)
    =
    {\rm KL}
    \bigl(
    \pi_{n,R}^\varepsilon
    \mid
    \mu\otimes\mu
    \bigr).
\]
Furthermore, by \Cref{lemma:integrated-bregman-identity},
\[
    \int \|x-y\|^2\,
    d\gamma_{n,R}^\varepsilon(x,y)
    -
    {\rm OT}(\mu,\nu)
    =
    2\int \mathbb D(x,T(x'))\,
    d\pi_{n,R}^\varepsilon(x,x').
\]
Combining the preceding estimates and using
\[
    \sum_{i\in\mathcal I}m_i^\varepsilon
    +
    m_{\rm rem}^\varepsilon
    =
    1,
\]
we obtain
\begin{multline*}
    \frac{
    {\rm EOT}_\varepsilon(\mu,\nu)
    -
    {\rm OT}(\mu,\nu)
    +
    \frac d2\,\varepsilon\log(\pi\varepsilon)
    }{\varepsilon}
    \\
    \le
    \sum_{i\in\mathcal I}
    m_i^\varepsilon
    \left[
    \frac12\log\det A_i-\log a_i
    \right]
    +
    C\omega_{n,R}
    \\
    +
    C
    \left[
    m_{\rm rem}^\varepsilon
    +
    M_{\rm rem}^\varepsilon
    +
    m_{\rm rem}^\varepsilon|\log m_{\rm rem}^\varepsilon|
    \right]
    +
    \mathfrak h(m_{\rm rem}^\varepsilon).
\end{multline*}
Letting \(\varepsilon\downarrow0\) with \(n\) and \(R\) fixed gives
\[
    m_{\rm rem}^\varepsilon
    \longrightarrow
    r_{n,R}
    :=
    \int
    \bigl(f_\mu-f_{n,R}\bigr)(x)\,dx,
\]
\[
    M_{\rm rem}^\varepsilon
    \longrightarrow
    s_{n,R}
    :=
    \int
    \|x\|^{2+\beta}
    \bigl(f_\mu-f_{n,R}\bigr)(x)\,dx .
\]
Therefore
\begin{multline}
    \label{eq:EOT-limsup-fixed-nR}
    \limsup_{\varepsilon\downarrow0}
    \frac{
    {\rm EOT}_\varepsilon(\mu,\nu)
    -
    {\rm OT}(\mu,\nu)
    +
    \frac d2\,\varepsilon\log(\pi\varepsilon)
    }{\varepsilon}
    \\
    \le
    \sum_{i\in\mathcal I}
    a_i|E_i|
    \left[
    \frac12\log\det A_i-\log a_i
    \right]
    +
    C\omega_{n,R}
    \\
    +
    C
    \left[
    r_{n,R}
    +
    s_{n,R}
    +
    r_{n,R}|\log r_{n,R}|
    \right]
    +
    \mathfrak h(r_{n,R}).
\end{multline}

We now let \(n\uparrow\infty\) with \(R\) fixed. Since
\(f_{n,R}\uparrow f_\mu\mathbf 1_{[-R,R]^d}\) in \(L^1\),
\[
    r_{n,R}
    \longrightarrow
    r_R
    :=
    \int_{([-R,R]^d)^c}f_\mu(x)\,dx,
\]
and, by dominated convergence,
\[
    s_{n,R}
    \longrightarrow
    s_R
    :=
    \int_{([-R,R]^d)^c}
    \|x\|^{2+\beta}f_\mu(x)\,dx .
\]
Moreover, \(\omega_{n,R}\to0\). By uniform ellipticity,
\(\log\det\nabla^2\varphi\) is bounded on \(\mathbb R^d\), and by continuity it
is uniformly continuous on \([-R,R]^d\). Since \(\delta_{n,R}\to0\),
\[
    \sum_{i\in\mathcal I}
    a_i|E_i|\,
    \frac12\log\det\nabla^2\varphi(x_i)
    \longrightarrow
    \int_{[-R,R]^d}
    \frac12\log\det\nabla^2\varphi(x)f_\mu(x)\,dx .
\]
Also,
\[
    \sum_{i\in\mathcal I}a_i|E_i|\log a_i
    =
    \int f_{n,R}\log f_{n,R}\,dx
    \longrightarrow
    \int_{[-R,R]^d}f_\mu\log f_\mu\,dx
\]
in the extended sense described above. Letting \(n\uparrow\infty\) in
\eqref{eq:EOT-limsup-fixed-nR}, we obtain
\begin{multline*}
    \limsup_{\varepsilon\downarrow0}
    \frac{
    {\rm EOT}_\varepsilon(\mu,\nu)
    -
    {\rm OT}(\mu,\nu)
    +
    \frac d2\,\varepsilon\log(\pi\varepsilon)
    }{\varepsilon}
    \\
    \le
    \int_{[-R,R]^d}
    \left[
    \frac12\log\det\nabla^2\varphi(x)
    -
    \log f_\mu(x)
    \right]d\mu(x)
    \\
    +
    C
    \left[
    r_R+s_R+r_R|\log r_R|
    \right]
    +
    \mathfrak h(r_R).
\end{multline*}

Finally, let \(R\uparrow\infty\). Since
\(\mu\in\mathcal P_{2+\beta}(\mathbb R^d)\), we have
\(r_R\to0\) and \(s_R\to0\), hence \(\mathfrak h(r_R)\to0\) and \(r_R|\log r_R|\to0\). Since
\(\log\det\nabla^2\varphi\) is bounded by uniform ellipticity,
\[
    \int_{[-R,R]^d}
    \frac12\log\det\nabla^2\varphi(x)\,d\mu(x)
    \longrightarrow
    \int
    \frac12\log\det\nabla^2\varphi(x)\,d\mu(x)\in (-\infty,\infty).
\]
Together with the entropy convergence \eqref{eq:KL-conv-R}, this gives
\[
    \limsup_{\varepsilon\downarrow0}
    \frac{
    {\rm EOT}_\varepsilon(\mu,\nu)
    -
    {\rm OT}(\mu,\nu)
    +
    \frac d2\,\varepsilon\log(\pi\varepsilon)
    }{\varepsilon}
    \le
    \int
    \left[
    \frac12\log\det\nabla^2\varphi(x)
    -
    \log f_\mu(x)
    \right]d\mu(x),
\]
where the right-hand side is in $[-\infty,\infty)$. By \eqref{eq:MA-density-identity},
\[
    \frac12\log\det\nabla^2\varphi(x)-\log f_\mu(x)
    =
    -\frac12
    \log\bigl[
    f_\mu(x)f_\nu(T(x))
    \bigr],
\]
and so
\[
    \limsup_{\varepsilon\downarrow0}
    \frac{
    {\rm EOT}_\varepsilon(\mu,\nu)
    -
    {\rm OT}(\mu,\nu)
    +
    \frac d2\,\varepsilon\log(\pi\varepsilon)
    }{\varepsilon}
    \le
    \mathcal K_{\rm EOT}(\mu,\nu).
\]
Together with the lower bound in \Cref{thm:eot-lower-bound}, this proves
\Cref{theorem:main-EOT}.
\end{proof}

\section{Omitted proofs}\label{se:omitted-proofs}

\begin{proof}[Proof of \cref{rk:finitenessOfLimit}]
We first show the sufficient conditions for finiteness. Recall that ${\rm Ent}(\alpha)>-\infty$ for any $\alpha\in\mathcal P^{\rm ac}_2(\mathbb R^d)$. If $f_\mu,f_\nu$ are bounded from above on the respective supports, ${\rm Ent}(\mu)$ and ${\rm Ent}(\nu)$ are finite, and then so is $\mathcal K_{\rm EOT}(\mu,\nu)$. For $\mathcal{K}(\mu,\nu,p)$, a lower bound leads to a bounded integrand and hence $\mathcal{K}(\mu,\nu,p)<\infty$.

For $p>1$, set
\[
\alpha_p:=\frac{p-1}{d(p-1)+2},\qquad s_p:=1-2\alpha_p=\frac{(d-2)(p-1)+2}{d(p-1)+2}.
\]
For a density $f$ and $s\in\mathbb R$, write $I_s(f):=\int_{\{f>0\}} f^s\,d\cL^d$; in particular, $I_0(f)=\cL^d(\{f>0\})$. Since $(\nabla\varphi)_\#\mu=\nu$, the inequality $ab\leq(a^2+b^2)/2$ gives
\[
\mathcal K(\mu,\nu,p)\leq \frac12 I_{s_p}(f_\mu)+\frac12 I_{s_p}(f_\nu).
\]
We shall use the following elementary consequence of H\"older's inequality: if $f$ is a probability density with finite $q$-moment and $s\in(d/(d+q),1)$, then $I_s(f)<\infty$. Indeed,
\[
\int f^s\,d\cL^d\leq \left(\int (1+\|x\|^q)f(x)\,d\cL^d(x)\right)^s
\left(\int (1+\|x\|^q)^{-s/(1-s)}\,d\cL^d(x)\right)^{1-s},
\]
and the last integral is finite whenever $qs/(1-s)>d$. If $p\in(1,2]$, then $s_p\geq s_2=d/(d+2)>d/(d+2+\beta)$, so the preceding estimate with $q=2+\beta$ proves $\mathcal K(\mu,\nu,p)<\infty$.

If $f\,d\cL^d$ is compactly supported and $s\in[0,1]$, then $I_s(f)\leq \cL^d(\operatorname{supp}(f\,d\cL^d))+1<\infty$. Since $s_p\in[0,1]$ for all $p>2$ when $d\geq2$, and $s_p=(3-p)/(p+1)$ when $d=1$, the same bound proves the compact-support assertion for $d\geq2$ and, in dimension one, for $p\leq3$.

It remains to construct the counterexamples. Let
\[
h(x):=\frac{\mathbf 1_{(0,e^{-1})}(x)}{x(\log(1/x))^2},
\]
and let $\eta$ be the product of $h(x)\,dx$ with the uniform law on $[0,1]^{d-1}$, with the latter factor omitted if $d=1$. The substitution $t=\log(1/x)$ shows that $\int h\,dx=1$ and
\[
{\rm Ent}(\eta)=\int_1^\infty \frac{t-2\log t}{t^2}\,dt=+\infty.
\]
Taking $\mu=\nu=\eta$, the Brenier map is the identity, so \cref{assumption:elliptic_phi} holds with $\varphi(x)=\|x\|^2/2$; moreover $\eta$ is compactly supported and satisfies \cref{assumption:hoelder_densities}.

For the $p=2$ counterexample, let $s_2=d/(d+2)$, choose $\gamma\in(1,(d+2)/d]$, and define
\[
f_2(x):=c_2(1+\|x\|)^{-(d+2)}\bigl(\log(e+\|x\|)\bigr)^{-\gamma}.
\]
Then $f_2\,d\cL^d\in\mathcal P_2^{\rm ac}(\mathbb R^d)$, since the radial second-moment integral is comparable at infinity to $\int^\infty r^{-1}(\log r)^{-\gamma}\,dr<\infty$. On the other hand,
\[
I_{s_2}(f_2)=\infty,
\]
because the corresponding radial integral is comparable at infinity to $\int^\infty r^{-1}(\log r)^{-\gamma s_2}\,dr$ and $\gamma s_2\leq1$. With $\mu=\nu=f_2\,d\cL^d$, the Brenier map is $\operatorname{Id}$ and $\mathcal K(\mu,\mu,2)=I_{s_2}(f_2)=\infty$.

Now fix $p>2$. Then $s_p<d/(d+2)$. If $s_p>0$, choose $\beta>0$ and $k$ such that $d+2+\beta<k\leq d/s_p$, which is possible since $d/s_p>d+2$; if $s_p\leq0$, choose any $\beta>0$ and $k>d+2+\beta$. Set
\[
f_p(x):=c_p(1+\|x\|)^{-k}.
\]
Then $f_p\,d\cL^d\in\mathcal P_{2+\beta}^{\rm ac}(\mathbb R^d)$. Moreover $I_{s_p}(f_p)=\infty$: when $s_p\neq0$, the radial integrand is comparable at infinity to $r^{d-1-ks_p}$, which is nonintegrable under the above choices, while for $s_p=0$ one has $I_0(f_p)=\cL^d(\mathbb R^d)$. Taking $\mu=\nu=f_p\,d\cL^d$ gives $\mathcal K(\mu,\mu,p)=\infty$, again with \cref{assumption:hoelder_densities,assumption:elliptic_phi} satisfied.

Finally, suppose $d=1$ and $p>3$, so $s_p=(3-p)/(p+1)<0$. Choose $a>0$ with $a s_p\leq -1$ and set
\[
f(x):=(a+1)x^a\mathbf 1_{(0,1)}(x).
\]
For $\mu=\nu=f\,d\cL^1$, the measures are compactly supported and satisfy \cref{assumption:hoelder_densities,assumption:elliptic_phi}, but
\[
\mathcal K(\mu,\mu,p)=I_{s_p}(f)=(a+1)^{s_p}\int_0^1 x^{a s_p}\,dx=\infty. \qedhere
\]
\end{proof}

\begin{proof}[Proof of \cref{le:consistency-p-to-one}]
Set
\[
\alpha:=p-1,
\qquad
\lambda_\alpha:=\frac{\alpha}{d\alpha+2},
\qquad
T:=\nabla\varphi,
\]
and write
\[
G(x):=f_\mu(x)f_\nu(T(x)),
\qquad
Z(x):=\log G(x),
\qquad
Z_\pm:=\max\{\pm Z,0\}.
\]
Then
\[
\mathcal K(\mu,\nu,1+\alpha)
=
\int e^{-\lambda_\alpha Z(x)}\,d\mu(x).
\]
Since \(T_\#\mu=\nu\), the logarithmic integrability assumption gives
\[
\int |Z|\,d\mu
\le
\int |\log f_\mu|\,d\mu+
\int |\log f_\nu|\,d\nu
<\infty,
\]
and therefore
\[
\mathcal K_{\rm EOT}(\mu,\nu)=-\frac12\int Z\,d\mu .
\]
Moreover, \cref{rk:finitenessOfLimit}(ii), evaluated at \(p=2\), yields
\(\int G^{-1/(d+2)}\,d\mu<\infty\).  Hence, with
\(\eta:=(d+2)^{-1}\),
\[
\int e^{\eta Z_-}\,d\mu<\infty,
\]
since \(e^{\eta Z_-}\le 1+G^{-\eta}\).  As
\(\lambda_\alpha/\alpha\to1/2\),
\[
\frac{e^{-\lambda_\alpha Z}-1}{\alpha}
\longrightarrow
-\frac12 Z
\qquad \mu\text{-a.e.}
\]
This convergence also holds in \(L^1(\mu)\).  Indeed, for all sufficiently small
\(\alpha\), on \(\{Z\ge0\}\),
\[
\left|\frac{e^{-\lambda_\alpha Z}-1}{\alpha}\right|
\le
\frac{\lambda_\alpha}{\alpha}Z
\le C Z_+,
\]
while on \(\{Z<0\}\), using \(e^u-1\le u e^u\) and
\(\lambda_\alpha\le\eta/2\),
\[
0\le
\frac{e^{\lambda_\alpha Z_-}-1}{\alpha}
\le
\frac{\lambda_\alpha}{\alpha}Z_-e^{\lambda_\alpha Z_-}
\le
C Z_-e^{\eta Z_-/2}
\le
C_\eta e^{\eta Z_-}.
\]
Dominated convergence then gives
\[
\mathcal K(\mu,\nu,1+\alpha)
=
1+
\alpha\mathcal K_{\rm EOT}(\mu,\nu)
+o(\alpha),
\]
and hence
\[
\log\mathcal K(\mu,\nu,1+\alpha)
=
\alpha\mathcal K_{\rm EOT}(\mu,\nu)+o(\alpha).
\]

It remains to expand the dimensional factor.  Since
\(B(x,b)=\Gamma(b)x^{-b}(1+O(x^{-1}))\) as \(x\to\infty\),
\[
\begin{aligned}
\mathfrak B_{d,1+\alpha}
&=
\frac{\omega_{d-1}}2(1+\alpha)^{d/2}
B\left(\frac{1+\alpha}{\alpha},\frac d2\right)  \\
&=
\frac{\omega_{d-1}\Gamma(d/2)}2\,\alpha^{d/2}(1+O(\alpha))
=(\pi\alpha)^{d/2}(1+O(\alpha)).
\end{aligned}
\]
Also
\[
R_\alpha:=
\frac{(1+\alpha)(d\alpha+2)}{d\alpha+2+2\alpha}
=1+O(\alpha^2).
\]
Thus, writing
\[
q_\alpha:=\frac{2}{d\alpha+2},
\]
and using \(\mathfrak C_{d,1+\alpha}=R_\alpha
\mathfrak B_{d,1+\alpha}^{-\alpha q_\alpha}\), we obtain
\[
\log\mathfrak C_{d,1+\alpha}
=
-\frac d2\alpha\log(\pi\alpha)
+O(\alpha^2|\log\alpha|).
\]
Define
\[
H_{\varepsilon,\alpha}
:=
\mathfrak C_{d,1+\alpha}
\varepsilon^{q_\alpha-1}
\alpha^{1-q_\alpha}
\mathcal K(\mu,\nu,1+\alpha).
\]
Since \(1-q_\alpha=d\alpha/(d\alpha+2)=(d/2)\alpha+O(\alpha^2)\), for fixed
\(\varepsilon>0\),
\[
\begin{aligned}
\log H_{\varepsilon,\alpha}
&=
\log\mathfrak C_{d,1+\alpha}
+(q_\alpha-1)\log\varepsilon
+(1-q_\alpha)\log\alpha
+\log\mathcal K(\mu,\nu,1+\alpha) \\
&=
\alpha\left[
\mathcal K_{\rm EOT}(\mu,\nu)
-
\frac d2\log(\pi\varepsilon)
\right]
+o(\alpha).
\end{aligned}
\]
Consequently,
\[
H_{\varepsilon,\alpha}
=
1+
\alpha\left[
\mathcal K_{\rm EOT}(\mu,\nu)
-
\frac d2\log(\pi\varepsilon)
\right]
+o(\alpha).
\]
Finally,
\[
\mathfrak C_{d,1+\alpha}
\left(\frac{\varepsilon}{\alpha}\right)^{q_\alpha}
\mathcal K(\mu,\nu,1+\alpha)
-
\frac{\varepsilon}{\alpha}
=
\frac{\varepsilon}{\alpha}\bigl(H_{\varepsilon,\alpha}-1\bigr),
\]
and the desired limit follows.
\end{proof}

\bibliographystyle{abbrv}
\bibliography{biblio1}

\end{document}